\newcommand{\rG}{{\rm G}}
\newcommand{\rU}{{\rm U}}
\newcommand{\cH}{\mathcal{H}}
\newcommand{\cI}{\mathcal{I}}
\newcommand{\cV}{\mathcal{V}}
\newcommand{\sX}{\mathscr{X}}
\newcommand{\fh}{{\mathfrak h}}
\newcommand{\fm}{{\mathfrak m}}
\newcommand{\R}{\mathbb{R}}
\newcommand{\C}{\mathbb{C}}
\newcommand{\SO}{{\rm SO}}
\newcommand{\Sp}{{\rm Sp}}
\newcommand{\Spin}{{\rm Spin}}
\newcommand{\SU}{{\rm SU}}
\newcommand{\U}{{\rm U}}
\renewcommand{\det}{\mathop\mathrm{det}\nolimits}
\newcommand{\Diff}{\mathrm{Diff}}
\renewcommand{\epsilon}{\varepsilon}
\newcommand{\Ric}{{\rm Ric}}
\newcommand{\Stab}{\mathrm{Stab}}
\newcommand{\tr}{\mathop{\mathrm{tr}}\nolimits}
\newcommand{\vol}{\mathrm{vol}}
\newcommand{\qandq}{\quad\text{and}\quad}
\newcommand{\qforq}{\quad\text{for}\quad}
\newcommand{\qwithq}{\quad\text{with}\quad}
\def\<{\mathopen{}\left<}
\def\>{\right>\mathclose{}}
\def\({\mathopen{}\left(}
\def\){\right)\mathclose{}}
\definecolor{gold}{rgb}{0.85,.66,0}
\definecolor{cherry}{rgb}{0.9,.1,.2}
\definecolor{burgundy}{rgb}{0.8,.2,.2}
\definecolor{orangered}{rgb}{0.85,.3,0}
\definecolor{orange}{rgb}{0.85,.4,0}
\definecolor{olive}{rgb}{.45,.4,0}
\definecolor{lime}{rgb}{.6,.9,0}
\definecolor{green}{rgb}{.2,.7,0}
\definecolor{grey}{rgb}{.4,.4,.2}
\definecolor{brown}{rgb}{.4,.3,.1}
\newtheorem{theorem}{Theorem}[section]
\newtheorem{proposition}[theorem]{Proposition}
\newtheorem{corollary}[theorem]{Corollary}
\newtheorem{lemma}[theorem]{Lemma}
\numberwithin{equation}{section}
\theoremstyle{remark}
\newtheorem{remark}{Remark}[theorem]
\theoremstyle{definition}
\newtheorem{definition}[theorem]{Definition}
\def\al{\alpha}
\def\w{\wedge}
\def\R{\mathbb{R}}
\def\C{\mathbb{C}}
\def\Lm{\Lambda}
\def\om{\omega}
\def\Om{\Omega}
\def\vp{\varphi}
\def\ip{\raise1pt\hbox{\large$\lrcorner$}} 
\def\ufm{\underline{\mathfrak{m}}}
\begin{document}
	\title{Harmonic flow of quaternion-Kähler structures}
	\author{
	Udhav Fowdar
    \qquad\qquad
    Henrique N. S\'a Earp 
    \\
  \emph{University of Campinas (Unicamp)}}
  \date{}

\maketitle
	
\vspace{-0.5cm}	
\begin{abstract}
We formulate the gradient Dirichlet flow of $\Sp(2)\Sp(1)$-structures on $8$-manifolds, as the first systematic study of a geometric quaternion-Kähler (QK) flow. Its critical condition of \emph{harmonicity} is especially relevant in the QK setting, since torsion-free structures are often topologically obstructed. We show that the conformally parallel property implies harmonicity, extending a result of Grigorian in the $\rG_2$ case. We also draw several comparisons with $\Spin(7)$-structures.

Analysing the QK harmonic flow, we prove an almost-monotonicity formula, which implies to long-time existence under small initial energy, via $\epsilon$-regularity. We set up a theory of harmonic QK solitons, constructing a non-trivial steady example. We produce explicit long-time solutions: one, converging to a torsion-free limit on the hyperbolic plane; and another,  converging to a limit which is harmonic but not torsion-free, on the manifold $\SU(3)$. We also study compactness and the formation of singularities.

\end{abstract}
	
\begin{adjustwidth}{0.95cm}{0.95cm}
    \tableofcontents
\end{adjustwidth}

\newpage
\section{Introduction}
In this paper we study the harmonic flow of $H$-structures introduced in \cite{Loubeau2019} for $H=\Sp(2)\Sp(1)$ on $8$-manifolds. We refer to it simply as the  quaternion-K\"ahler harmonic flow. The corresponding flows for $H=\mathrm{G}_2,\Spin(7),\rU(n)$ have been studied quite extensively in recent years cf. \cites{Dwivedi2019, Grigorian2019, Dwivedi2021, HeLi2021}. To the best of our knowledge, this is the first study in the literature of a geometric flow of quaternion-K\"ahler structures. 

Harmonic structures arise naturally as the critical points of the $L^2$-energy of the intrinsic torsion of an $H$-structure (wih $H \subset \SO(n)$) and as such can be interpreted as the `best' representative $H$-structure in a given isometric class. The harmonic flow is precisely the negative gradient flow of this energy functional. 
A well-known result of Poon-Salamon in \cite{PoonSalamon1991} asserts that there are only three compact (torsion-free) quaternion-K\"ahler $8$-manifolds, and these are all symmetric spaces. Thus, harmonic structures can be viewed as the next most special objects on any other compact $8$-manifolds admitting $\Sp(2)\Sp(1)$-structures. 
This article provides the first step towards a more general study of geometric flows of  $\Sp(n)\Sp(1)$-structures on $4n$-manifolds, and it can be read as a detailed instance of the abstract theory simultaneously formulated in \cite{Fadel2022}.
One long-term prospect of $\Sp(n)\Sp(1)$-flows would be an analytic approach to the LeBrun-Salamon conjecture, which asserts that all compact quaternion-K\"ahler manifolds are symmetric spaces. Our exposition is organised as follows.

In Section \ref{preliminariesonsp2sp1section} we review the basic properties of $\Sp(2)\Sp(1)$-structures, the geometry of which is determined by an algebraically special $4$-form $\Om$. By comparing with $\Spin(7)$-structures, determined by a different $4$-form $\Phi$, we derive several new identities. We emphasise the similarities and differences between the underlying structures.
In Section \ref{sec: harmonic qK structures} we derive the notion of harmonic $\Sp(2)\Sp(1)$-structures from the general framework of harmonic $H$-structures introduced in \cite{Loubeau2019}. We illustrate its relevance in \S\ref{examplesofharmonicQKsection},  by constructing explicit examples of harmonic $\Sp(2)\Sp(1)$-structures which are not torsion-free.

The analytic core of the paper is covered in Sections \ref{sec: harmonicQKflow} and \ref{sec: long-time and sing}. We formulate the quaternion-K\"ahler harmonic flow and the corresponding notion of soliton, studying basic properties such as evolution of torsion and parabolic rescalings. By relying upon the analogy with the harmonic $\Spin(7)$-flow  \cites{Dwivedi2021}, we establish in the quaternion-K\"ahler setting results such as a compactness theorem, almost-monotonicity formulae and $\epsilon$-regularity, as well as a description of the singular set of the flow.
One key difference of our approach is that we use the representation theory of $\Sp(2)\Sp(1)$, rather than overly involved local computations, to simplify several proofs and thus illustrate the usefulness of a more unified approach to harmonic $H$-flows. Moreover, by adapting the work of He-Li in \cite{HeLi2021} in the context of harmonic $\rU(n)$-structures, we derive an improved monotonicity formula, in tandem with a similar development in \cite{Fadel2022}; in fact, by further mobilising that paper's abstract theory for harmonic flows, we conclude long-time existence given small initial energy.

Finally, Section \ref{sec: explicit sols of the HF} illustrates different regimes of the harmonic flow with concrete examples. In \S\ref{sec: explicitflowsolutionsection} we study the flow on certain Lie groups and exhibit explicit solutions converging to harmonic $\Sp(2)\Sp(1)$-structures in infinite time. In particular, we exhibit a harmonic $\Sp(2)\Sp(1)$-structure on the manifold $\SU(3)$. In \S\ref{sec: steadysolitonexample} we construct an example of a steady gradient soliton of the flow, which to our knowledge is the first explicit non-trivial soliton of a harmonic flow of geometric structures.

\bigskip

\noindent\textbf{Acknowledgements:} 

The authors are grateful to Daniel Fadel, Eric Loubeau and Andr\'es Moreno, for a collaborative approach to this project, in tandem with the development of the general theory of geometric flows.

HSE was funded by the S\~{a}o Paulo Research Foundation (Fapesp)  \mbox{[2021/04065-6]} and the Brazilian National Council for Scientific and Technological Development (CNPq)  \mbox{[307217/2017-5]}. HSE has also benefited from a CAPES-COFECUB bilateral collaboration (2018-2022), granted by the Brazilian Coordination for the Improvement of Higher Education Personnel (CAPES) – Finance Code 001 [88881.143017/2017-01], and COFECUB [MA 898/18], and from a CAPES-MathAmSud (2021-2023) grant [88881.520221/2020-01]. 

UF was funded by the S\~{a}o Paulo Research Foundation (Fapesp) [2021/07249-0], in a postdoctoral grant subordinate to the thematic project \emph{Gauge theory and algebraic geometry} [2018/21391-1], led by Marcos Jardim.

\section{Preliminaries on \texorpdfstring{$\Sp(2)\Sp(1)$}{}-structures}
\label{preliminariesonsp2sp1section}

An $\Sp(2)\Sp(1)$-structure on an $8$-manifold $M$ is determined by an algebraically special $4$-form $\Om$, pointwise modelled on
\begin{gather}
	\Om=\frac{1}{2}(\om_1 \w \om_1+\om_2 \w \om_2+\om_3 \w \om_3),\label{qk4form}
\end{gather}
where the $2$-forms $\omega_1,\omega_2,\omega_3$ are given in local coordinates $\{x_i\}$ by
\begin{align}
	\om_1 &= dx_{12}+dx_{34}+dx_{56}+dx_{78},\label{symplecticform1}\\
	\om_2 &=dx_{13}-dx_{24}+dx_{57}-dx_{68},\label{symplecticform2}\\
	\om_3 &=dx_{14}+dx_{23}+dx_{58}+dx_{67}.\label{symplecticform3}
\end{align}
It might be worth recalling that $\Sp(2)$ is the stabiliser of the triple $\om_1,\om_2,\om_3$, and that the additional $\Sp(1)$ factor corresponds to rotating this data. Quotienting by the centre $\mathbb{Z}_2$, generated by $(-1,-1)$, leads to the $\Sp(2)\Sp(1):=\Sp(2) \times_{\mathbb{Z}_2} \Sp(1)$ structure. The study of harmonic $\Sp(2)$-structures turns out to be more subtle and is currently an ongoing project by the authors.

Since the stabiliser of $\Om$ in $\mathrm{GL}(8,\R)$ is isomorphic to $\Sp(2)\Sp(1)\subset\SO(8)$ (see also Proposition \ref{prop: metricfromOmproposition}), 
it follows that
$\Om$ defines, up to homothety, both a metric $g_\Om$ and volume form $\vol_{\Om}$. 
In the above notation, these are pointwise given by
\begin{gather}
	g_\Om=dx_1^2+dx_2^2+dx_3^2+dx_4^2+dx_5^2+dx_6^2+dx_7^2+dx_8^2,
	\label{qkmetric}\\
	\vol_{\Om}= \frac{1}{30} \Om \w \Om = dx_{1\dots8}.
	\label{qkvolumeform}
\end{gather}
The action of $\Sp(2)\Sp(1)$ on $\R^8$ corresponds to the usual left-action of $\Sp(2)$ on $\mathbb{H}^2$ and right-action by $\Sp(1)$. This representation can be seen more concretely be way of Salamon's $E$-$H$ formalism, as follows.
The complexified (co)tangent bundle can be viewed as the $\Sp(2)\Sp(1)$-module
\begin{equation}
\label{eq: tangentbundle}
    T^*_\C M= E \otimes H,
\end{equation}
where $E$ (respectively $H$) is the associated vector bundle to the standard representation of $\Sp(n)$ (respectively $\Sp(1)$) on $\mathbb{C}^{2n}$ (respectively $\mathbb{C}^2$), see also \cite{Salamon1982}. 
In what follows we shall often ignore the fact that we are complexifying the tensor bundles of $M$, and use the same notation for the complexified and the underlying real vector bundles, as all these spaces admit a real structure owing to the quaternionic structure.

\subsection{Representation theory and intrinsic torsion}

A description of the tensor bundles on quaternion-K\"ahler $8$-manifolds in the $E$-$H$ notation can be found in \cites{Salamon1989, Swann1991}, but for our purposes we shall need the following more concrete description, found in \cite{FowdarSalamon2021}. 
The space of $2$-forms splits as an $\Sp(2)\Sp(1)$-module as follows: 
\begin{align*}
\Lm^2 &= \mathfrak{sp}(1)\oplus \mathfrak{sp}(2)\oplus (\mathfrak{sp}(1)\oplus \mathfrak{sp}(2))^\perp \\
&= S^2H \oplus S^2E \oplus \Lm^2_0 E \otimes S^2 H\\
&= \Lm^2_3 \oplus \Lm^2_{10} \oplus \Lm^2_{15}
\end{align*} 
where
\begin{align}
	\Lm^2_3  &= \{\al\in \Lm^2 \ | * (\al \w \Om)=5\al  \},\label{Lm23} \\
	\Lm^2_{10}  &= \{\al\in \Lm^2 \ | * (\al \w \Om)=-3\al  \},\label{Lm210} \\
	\Lm^2_{15} &= \{ \al\in \Lm^2 \ | * (\al \w \Om)=\al  \}.\label{Lm215}
\end{align}
Note that the subbundle $\Lm^2_3$ can be equivalently defined as the span $\langle\om_1,\om_2,\om_3\rangle$, although we should emphasise that the $2$-forms $\om_i$ can only be chosen locally, i.e. $\Lm^2_3$ is not in general a trivial vector bundle (for instance consider $M=\mathbb{H}\mathbb{P}^2$). 

We shall also need the decomposition of the space of $4$-forms. First note that the Hodge star operator $*$ splits the space of $4$-forms into self-dual and an anti-self-dual components:
$$\Lm^4=\Lm^{4+} \oplus \Lm^{4-}.$$
These further decompose into irreducible $\Sp(2)\Sp(1)$-modules as follows:
\begin{align*}
	\Lm^{4+}_1 &= \langle  \Om \rangle \\ 
	\Lm^{4+}_5 \cong S^4H 
	&= \{\al\in \Lm^{4+} \ |  \ * (\al \w \om) \w \Om =5 \al \w \om \ \ \forall \om \in \Lm^2_3  \}\\
	\Lm^{4+}_{15} \cong \Lm^2_0 E \otimes S^2 H
	&= \{\al\in \Lm^{4+} \ | \ * (\al \w \om) \w \Om = \al \w \om \ \ \forall \om \in \Lm^2_3  \}\\
	\Lm^{4+}_{14} \cong S^2_0(\Lm^2_0 E) &= \{\al \in \Lm^{4+}\ |\  \al \w \om=0 \ \ \forall \om \in \Lm^2_3  \} \\
	\Lm^{4-}_{5}\cong \Lm^2_0 E 
	&= \{\al\in \Lm^{4-} \ | \ * (\al \w \om) \w \Om = \al \w \om \ \ \forall \om \in \Lm^2_3  \}\\
	\Lm^{4-}_{30} \cong S^2 E \otimes S^2 H 
	&= \{\al\in \Lm^{4-} \ | \ * (\al \w \om) \w \Om =-3 \al \w \om \ \ \forall \om \in \Lm^2_3  \}.
\end{align*}
Recall also that there is a natural action of $\Lm^2 \cong \mathfrak{so}(8) \subset \mathrm{End}(\R^8)$ on the space of $k$-forms given by
\begin{align*}
	\Lm^2 \otimes \Lm^k &\to \Lm^k\\
	(\al \w \beta)\otimes \Upsilon &\mapsto \al \w (\beta^\sharp \ip \Upsilon)-\beta \w (\alpha^\sharp \ip \Upsilon).
\end{align*}
When $k=2$, observe that this is just the usual Lie bracket operation in $\mathfrak{so}(8)$. Since in our situation we have the quaternion-K\"ahler $4$-form $\Om$, this gives rise to an `infinitesimal action' operator $\diamond: \Lm^2 \to \Lm^4$, defined on simple $2$-forms by
\begin{equation}
    (\alpha \w \beta)  \mapsto (\alpha \w \beta) \diamond \Om 
    := \al \w (\beta^\sharp \ip \Om)-\beta \w (\alpha^\sharp \ip \Om)\label{diamondoperator} .\end{equation}
\begin{lemma}
\label{kernelofdiamond}
	The kernel of the operator $\diamond$ is isomorphic to $S^2H\oplus S^2 E$, hence $\diamond$ restricts to an isomorphism on $\Lm^2_{15}\cong \Lm^{4+}_{15}$.
\end{lemma}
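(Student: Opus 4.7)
The plan is to identify $\diamond$ with the infinitesimal action of $\mathfrak{so}(8)$ on $\Lambda^4(\R^8)^*$, evaluated at $\Omega$, and then to read off the kernel from the stabiliser of $\Omega$. First I would unpack the formula \eqref{diamondoperator}: for a simple bivector $\alpha = a \wedge b$, viewed as an element of $\mathfrak{so}(8)\cong \Lambda^2$, one has $(a\wedge b)\diamond \Omega = a\wedge(b^\sharp\ip\Omega) - b\wedge(a^\sharp\ip\Omega)$, which is exactly $\frac{d}{dt}\big|_{t=0}\exp(t\,a\wedge b)^*\Omega$, i.e.\ the derivative of the natural $\GL(8,\R)$-action on $\Lambda^4$. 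By $\R$-linearity this identification extends to all of $\Lambda^2$, so $\ker(\diamond)$ is precisely the Lie algebra of the stabiliser of $\Omega$ in $\GL(8,\R)$.

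Next I would invoke the fact (recalled in the paper) that this stabiliser is $\Sp(2)\Sp(1)\subset \SO(8)$, so $\ker(\diamond) = \mathfrak{sp}(2)\oplus \mathfrak{sp}(1) \subset \mathfrak{so}(8) \cong \Lambda^2$. Comparing with the $\Sp(2)\Sp(1)$-decomposition
\[
\Lambda^2 \;=\; \Lambda^2_3 \oplus \Lambda^2_{10} \oplus \Lambda^2_{15}
\;=\; S^2H \oplus S^2E \oplus (\Lambda^2_0E\otimes S^2H),
\]
where the first two summands are precisely $\mathfrak{sp}(1)$ and $\mathfrak{sp}(2)$ respectively, yields $\ker(\diamond) \cong S^2H \oplus S^2E$.

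For the second assertion I would apply Schur's lemma together with rank-nullity. Since $\diamond$ is $\Sp(2)\Sp(1)$-equivariant and $\Lambda^2_{15}$ is irreducible, $\diamond|_{\Lambda^2_{15}}$ is either zero or injective; but having just identified the kernel, injectivity is automatic. The image is therefore an equivariant copy of $\Lambda^2_0 E \otimes S^2H$ inside $\Lambda^4$. Inspecting the irreducible decomposition of $\Lambda^4$ listed in the excerpt, this module appears with multiplicity one and only as $\Lambda^{4+}_{15}$ (all other summands have incompatible highest weights or dimensions). Hence $\diamond$ restricts to an isomorphism $\Lambda^2_{15} \xrightarrow{\sim} \Lambda^{4+}_{15}$, as claimed.

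The only real obstacle is the opening identification of $\diamond$ with the Lie algebra action, which is conceptually immediate but requires a careful sign/index check to confirm that the definition \eqref{diamondoperator} really does give the derivative of the $\GL(8,\R)$-action rather than, say, its transpose. Once that book-keeping is settled, the remainder of the argument is pure representation theory and bypasses any need for explicit computation in coordinates.
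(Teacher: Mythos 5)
Your proof is correct but takes a genuinely different route from the paper's. The paper argues purely by module comparison: since $\Lambda^2_0E\otimes S^2H$ is the only irreducible $\Sp(2)\Sp(1)$-module common to $\Lambda^2$ and $\Lambda^4$, Schur's lemma forces $\diamond$ to annihilate $S^2H\oplus S^2E$ automatically and to be either zero or an isomorphism on $\Lambda^2_{15}$, so all that remains is a one-line check that $\diamond\neq0$ there. You instead read $\diamond$ as the infinitesimal $\mathfrak{so}(8)$-action on $\Omega$, so that $\ker(\diamond)$ is \emph{exactly} $\operatorname{Lie}(\Stab(\Omega))=\mathfrak{sp}(2)\oplus\mathfrak{sp}(1)\cong S^2E\oplus S^2H$; this determines the kernel on the nose and makes the non-vanishing check superfluous, since the quotient $\Lambda^2/\ker(\diamond)$ already has the right dimension. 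The trade-off is exactly the sign/index book-keeping you flag at the end: one does need to confirm that \eqref{diamondoperator} coincides (up to an irrelevant overall sign) with $\tfrac{d}{dt}\big|_{t=0}\exp(tA)^*\Omega$ where $A\in\mathfrak{so}(8)$ is the skew endomorphism corresponding to $\alpha\wedge\beta$ — this is true, and it is the reason the paper itself calls $\diamond$ an ``infinitesimal action'' operator and uses the description $\Stab(\Omega)=\Sp(2)\Sp(1)$ freely elsewhere. Both proofs are short and valid; yours is slightly more self-contained because it does not depend on knowing the full irreducible decomposition of $\Lambda^4$ in advance (only the multiplicity-one occurrence of $\Lambda^2_0E\otimes S^2H$), whereas the paper's requires that decomposition to see that $S^2H$ and $S^2E$ have no home in $\Lambda^4$.
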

\begin{proof}
	Since $\diamond$ is an $\Sp(2)\Sp(1)$-equivariant map, and $\Lm^2_0 E \otimes S^2H$ is the only $\Sp(2)\Sp(1)$-module contained in both $\Lm^2$ and $\Lm^4$, by Schur's lemma it suffices to check that it is non-zero.
\end{proof}
\begin{remark}\label{defintionofdiamond}
More generally, the diamond operator $\diamond$ can be naturally extended to the action of $$\mathfrak{gl}(8,\R)\cong \mathrm{End}(\R^8) \cong \R^{8*}\otimes \R^8 \cong S^2(\R^{8*})\oplus \Lm^2(\R^{8*})$$ 
on $\Om$ as above by $(\alpha \otimes \beta) \diamond \Om := \al \w (\beta^\sharp \ip \Om)$. Then the same argument as in Lemma \ref{kernelofdiamond} shows that there is an isomorphism 
$$
S^2(T^*M) \cong \langle \Om\rangle \oplus \Lm^{4-}_5\oplus \Lm^{4-}_{30}.
$$ 
\end{remark}
The intrinsic torsion tensor $T \in \Lm^1_8 \otimes \Lm^2_{15}$ of the $\Sp(2)\Sp(1)$-structure determined by $\Om$ can now be defined as
\begin{equation}
\nabla _{\cdot}\Om= T(\cdot) \diamond \Om \in \Lm^1_8 \otimes \Lm^{4+}_{15}.
\end{equation}
In order to extract $T$ from the above expression we need to invert the isomorphism $\diamond: \Lm^2_{15} \to \Lm^{4+}_{15}$. To do so we first note that given an arbitrary $4$-form $\kappa$ one can define a triple contraction operator $\ip_{3}:\Lm^4 \to \Lm^2$, given on simple $4$-forms, by
\begin{align}
	\alpha_1 \w \alpha_2 \w \alpha_3 \w \alpha_4\ \ip_{3} \kappa 
	:= \
	&\al_1 \w (\alpha_2^\sharp \ip \alpha_3^\sharp\ip \alpha_4^\sharp\ip\ \kappa)-
	\al_2 \w (\alpha_1^\sharp \ip \alpha_3^\sharp\ip \alpha_4^\sharp\ip \kappa)\ +\\
	&\al_3 \w (\alpha_1^\sharp \ip \alpha_2^\sharp\ip \alpha_4^\sharp\ip \kappa)-
	\al_4 \w (\alpha_1^\sharp \ip \alpha_2^\sharp\ip \alpha_3^\sharp\ip \kappa).\nonumber
\end{align}
In particular, by taking $\kappa=\Om$ we get the following $\Sp(2)\Sp(1)$-equivariant map
\begin{align}
	\iota_3(\alpha_1 \w \alpha_2 \w \alpha_3 \w \alpha_4) :=
	 \alpha_1 \w \alpha_2 \w \alpha_3 \w \alpha_4\ \ip_{3} \Om. 
	\nonumber
\end{align}
By inspecting the decomposition of  $2$- and $4$-forms into irreducible $\Sp(2)\Sp(1)$-modules, and by Schur's lemma, we know that either $\iota_3$ is zero or it restricts to an isomorphism $\Lm^{4+}_{15} \to \Lm^2_{15}$. 
\begin{lemma}
\label{triplecontractioninverse}
	The operator $\iota_3$ satisfies
	\begin{equation}
		\iota_3(\kappa \diamond \Om) =32 \kappa,
		\qforq
		\kappa \in \Lm^2_{15}.
	\end{equation}
\end{lemma}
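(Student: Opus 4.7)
The plan is to use Schur's lemma together with a single explicit normalisation computation. By Lemma \ref{kernelofdiamond}, $\diamond \colon \Lambda^2_{15} \to \Lambda^{4+}_{15}$ is an isomorphism of irreducible $\Sp(2)\Sp(1)$-modules. The operator $\iota_3$ is manifestly $\Sp(2)\Sp(1)$-equivariant (it is built from the intrinsic contraction pairing with $\Omega$), and by the discussion preceding the lemma it restricts to a map $\Lambda^{4+}_{15} \to \Lambda^2_{15}$. Hence the composition $\iota_3 \circ \diamond$ is an $\Sp(2)\Sp(1)$-equivariant endomorphism of the irreducible module $\Lambda^2_{15}$, and Schur's lemma forces $\iota_3 \circ \diamond = c\cdot\mathrm{id}$ for some scalar $c\in\R$. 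It only remains to show that $c=32$.

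To pin down $c$, I would evaluate both sides on a single convenient test element of $\Lambda^2_{15}$. A natural choice is $\kappa = dx_{15}+dx_{26}+dx_{37}+dx_{48}$ (or any comparably symmetric element): one first checks via the characterisation \eqref{Lm215} that $*(\kappa\wedge\Omega)=\kappa$, so $\kappa\in\Lambda^2_{15}$. Then, working in the flat model with $\Omega$ given by \eqref{qk4form} and the $\omega_i$ by \eqref{symplecticform1}--\eqref{symplecticform3}, I compute $\kappa\diamond\Omega$ directly from the definition \eqref{diamondoperator}, and finally contract three legs with $\Omega$ via the definition of $\iota_3$. Matching the result against $32\kappa$ identifies the constant.

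The main obstacle, and the only real work, is the bookkeeping in this normalisation computation: expanding $\kappa\diamond\Omega$ produces a 4-form with many terms, and applying $\iota_3$ requires carefully summing contractions of $\Omega$ against each triple of wedge factors, with the alternating signs from the definition of $\iota_3$. To keep the calculation tractable I would (i) exploit that $\diamond$ and $\iota_3$ commute with the $\Sp(2)\Sp(1)$-action, restricting attention to a single orbit of monomials, and (ii) reuse the standard identities $e_i \ip e_j \ip \Omega$ in the flat model, which follow immediately from \eqref{qk4form} and the explicit formulas \eqref{symplecticform1}--\eqref{symplecticform3}. Once the scalar $32$ is extracted from this one computation, the lemma follows for every $\kappa\in\Lambda^2_{15}$ by equivariance.
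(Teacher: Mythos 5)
Your strategy is the right one and is essentially the paper's: Schur's lemma (via Lemma \ref{kernelofdiamond} and the discussion preceding the statement) reduces the claim to determining a single normalisation constant, which one then extracts from an explicit computation in the flat model. The paper does this by reducing a generic simple $2$-form $\alpha\wedge\beta$ to a normal form via the transitive $\Sp(2)$-action and showing $\iota_3((\alpha\wedge\beta)\diamond\Om)=32\,\pi^2_{15}(\alpha\wedge\beta)$; you propose instead to evaluate on a single fixed test element of $\Lm^2_{15}$. Both are sound routes to the same constant.

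However, the specific test element you propose does not work: $\kappa = dx_{15}+dx_{26}+dx_{37}+dx_{48}$ is \emph{not} in $\Lm^2_{15}$. Under the identification $\R^8\cong\mathbb{H}^2$ with $q_1 = x_1+ix_2+jx_3+kx_4$ and $q_2 = x_5+ix_6+jx_7+kx_8$, this $2$-form corresponds to the skew-symmetric endomorphism $(q_1,q_2)\mapsto(q_2,-q_1)$, which is left $\mathbb{H}$-linear, i.e.\ an element of $\mathfrak{sp}(2)\cong\Lm^2_{10}$. Consistently, in the explicit basis of $\Lm^2_{15}$ recorded in the proof of Lemma \ref{Lm215andLm215pairingLemma}, every combination of $dx_{15},dx_{26},dx_{37},dx_{48}$ that appears has coefficients summing to zero, whereas your $\kappa$ has coefficient sum $4$. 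Since $\Lm^2_{10}=\mathfrak{sp}(2)\subset\ker\diamond$ by Lemma \ref{kernelofdiamond}, you would compute $\iota_3(\kappa\diamond\Om)=\iota_3(0)=0$, which does not determine the constant. To your credit you do say you would first verify $*(\kappa\wedge\Om)=\kappa$; carrying out that check would reveal $*(\kappa\wedge\Om)=-3\kappa$ and force you to choose another element. Any genuine element of $\Lm^2_{15}$ will do, e.g.\ $dx_{12}+dx_{34}-dx_{56}-dx_{78}$ or $dx_{15}-dx_{26}$ from the listed basis. With a correct test element the rest of your argument goes through.
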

\begin{proof}
	Since the result is algebraic, it suffices to work at a point in $M^8$.
	Given a simple $2$-form $\alpha \w \beta$, we want to compute $(\alpha \w \beta) \diamond \Om$. We now make two convenient assumptions, without loss of generality. Since $\Om$ is invariant by $\Sp(2)\Sp(1)$, and $\Sp(2)$ acts transitively on $S^7$,  we can set $\al=dx_1$ while leaving $\Om$ unchanged; furthermore, as the stabiliser of $dx_{1}$ in $\Sp(2)$ is isomorphic to $\Sp(1)$, we can assume that $\beta= b\cdot dx_{2}+c\cdot dx_{3}+u\cdot dx_{4}+v\cdot dx_{5}$, for some constants $b,c,u,v$. Hence our typical $2$-form can be written as
	 \[\al \w \beta = \frac{b}{4}\cdot \om_1 +\frac{c}{4}\cdot \om_2 +\frac{u}{4}\cdot \om_3 +\pi^2_{10}(\al \w \beta)+\pi^2_{15}(\al \w \beta),\]
	 where $\pi^i_{j}:\Lm^i \to \Lm^i_{j}$ denotes the projection map.
	A direct computation now shows that
	\[ \iota_3((\alpha \w \beta) \diamond \Om) = 32\cdot \pi^2_{15}(\alpha \w \beta). 
	\qedhere\]
\end{proof}

Using Lemma \ref{triplecontractioninverse} we can now rewrite the intrinsic torsion tensor as
\begin{equation}
\label{eq: qktorsion}
    T(\cdot)= \frac{1}{32}\  \iota_3(\nabla_{\cdot} \Om).
\end{equation}
Finally we also record one key identity between the diamond operator and the  triple contraction, for later use.
\begin{lemma}
\label{Lm215andLm215pairingLemma}
    Given $2$-forms $\alpha=\sum_{i<j}\alpha_{ij}dx_{ij}$
    and $\beta=\sum_{i<j}\beta_{ij}dx_{ij}$ belonging to $\Lm^2_{15}\cong \Lm^2_0E\otimes S^2H$, we have 
\begin{equation}
    (\alpha \diamond \Om) \ip_3 (\beta \diamond \Om)= 32 \sum_{i,j,k} \alpha_{ik}\beta_{jk}dx_{ij} \in \Lm^2_3 \oplus \Lm^2_{10}.\label{Lm215andLm215pairing}
\end{equation}
\end{lemma}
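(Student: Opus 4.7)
The plan is to combine $\Sp(2)\Sp(1)$-equivariance with Schur's lemma to reduce the identity to a finite scalar check, and then close it by a direct coordinate computation parallel to that of Lemma \ref{triplecontractioninverse}.

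Both sides of \eqref{Lm215andLm215pairing} define bilinear, $\Sp(2)\Sp(1)$-equivariant maps $\Lm^2_{15}\otimes\Lm^2_{15}\to\Lm^2$. Using the description $\Lm^2_{15}\cong\Lm^2_0E\otimes S^2H$, one has the standard tensor decompositions
$$
(\Lm^2_0E)^{\otimes 2}\cong \C\oplus S^2E\oplus S^2_0(\Lm^2_0E)
\qandq
(S^2H)^{\otimes 2}\cong \C\oplus S^2H\oplus S^4H,
$$
where $S^2_0(\Lm^2_0E)$ is the $14$-dimensional trace-free symmetric square of the $\SO(5)$-vector representation (using $\Sp(2)\cong\Spin(5)$), and $\Lm^2(\Lm^2_0E)\cong S^2E=\mathfrak{sp}(2)$. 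Distributing and comparing with $\Lm^2\cong S^2H\oplus S^2E\oplus(\Lm^2_0E\otimes S^2H)$, one observes that among the irreducible summands of $\Lm^2_{15}\otimes\Lm^2_{15}$ the only ones that also appear in $\Lm^2$ are $\Lm^2_3\cong S^2H$ and $\Lm^2_{10}\cong S^2E$, each with multiplicity one, while the $\Lm^2_{15}$-component does not appear. By Schur's lemma the image of either side lies in $\Lm^2_3\oplus\Lm^2_{10}$, giving the claimed inclusion, and any such equivariant map is determined by two scalars.

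To fix these scalars, it suffices to verify \eqref{Lm215andLm215pairing} on two test pairs $(\alpha,\beta)$ whose tensor products project nontrivially onto the $\Lm^2_3$- and $\Lm^2_{10}$-isotypes, respectively. Employing the normalisation argument of Lemma \ref{triplecontractioninverse} --- namely, the transitive $\Sp(2)$-action on the unit sphere in $E$ and its residual $\Sp(1)$-stabiliser --- one reduces $\alpha$ to a canonical coordinate monomial and picks a suitable linearly independent $\beta$. Expanding $\alpha\diamond\Om$ and $\beta\diamond\Om$ termwise via \eqref{diamondoperator} and performing the triple contraction $\ip_3$ confirms the scalar factor $32$, which has the same origin as in Lemma \ref{triplecontractioninverse}, i.e.\ the normalisation of $\Om$ fixed by \eqref{qk4form}. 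The principal obstacle is the combinatorial bulk of this final direct calculation, since each 4-form $\alpha\diamond\Om$ already expands into many monomials before the triple contraction against another such 4-form; however, since only two scalars need to be matched, the bookkeeping can be arranged to remain of comparable length to that of Lemma \ref{triplecontractioninverse}.
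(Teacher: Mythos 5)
Your proposal follows the same strategy as the paper's proof: reduce by $\Sp(2)\Sp(1)$-equivariance and Schur's lemma, identify $\Lm^2_3$ and $\Lm^2_{10}$ as the only isotypes of $\Lm^2$ appearing (each with multiplicity one) in $\Lm^2_{15}\otimes\Lm^2_{15}$, and then fix the normalisation by a coordinate check. You phrase the representation-theoretic reduction more precisely than the paper does --- rather than the paper's looser claim that transitivity lets one ``choose any $\alpha,\beta$'', you correctly observe that an equivariant bilinear map into $\Lm^2$ is determined by exactly two scalars, so only finitely many test evaluations are needed. Note, however, that a single well-chosen pair already pins down both scalars: in the paper's computation with $\alpha=dx_{28}+dx_{35}$ and $\beta=dx_{15}-dx_{26}$, the output $32(-dx_{13}+dx_{68})$ has nonzero projections onto both $\Lm^2_3$ and $\Lm^2_{10}$, so one evaluation suffices rather than two. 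A minor slip: the relevant transitivity is that of $\SO(5)\cong\Sp(2)/\Z_2$ on the unit sphere of $\Lm^2_0E$ and of $\SO(3)\cong\Sp(1)/\Z_2$ on the unit sphere of $S^2H$, not the $\Sp(2)$-action on the unit sphere of $E$ (which is the relevant one in Lemma~\ref{triplecontractioninverse} but not here). Finally, as written your proposal stops short of the coordinate computation --- it asserts but does not establish the factor $32$ --- so the reduction is sound but the proof is incomplete without carrying out that final calculation.
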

\begin{proof}
Again this is an algebraic relation, which can be assessed at a point. By choosing geodesic normal coordinates, we can assume that we are working on $(\R^8,\Om)$.
Since $\diamond$ and $\ip_3$ are both linear operators, it suffices to consider the case when $\alpha$ and $\beta$ are of the form $v\otimes w \in \Lm^2_{15}\cong \Lm^2_0E\otimes S^2H$, for some unit vectors $v,w$. Furthermore we know that $\Sp(2)\Sp(1)$ acts as $\SO(5)\cong \Sp(2)/\mathbb{Z}_2$ on $\Lm^2_0E \cong \R^5$ and as $\SO(3)\cong \Sp(1)/\mathbb{Z}_2$ on $S^2H\cong \R^3$ i.e. $\Sp(2)\Sp(1)$ acts transitively on each unit sphere and hence, as in the proof of Lemma \ref{triplecontractioninverse},  we can choose any elements $\al$ and $\beta$ in $\Lm^2_{15}$. So for instance we consider $\alpha=dx_{28}+dx_{35}$ and $\beta=dx_{15}-dx_{26}$; using characterisation (\ref{Lm215}) for $\Lm^2_{15}$, one readily checks that:
\begin{align*}
    \Lm^2_{15}=\langle &dx_{15}-dx_{26},dx_{16}+dx_{25},dx_{15}-dx_{37},dx_{16}+dx_{38},dx_{28}+dx_{35},dx_{17}-dx_{28},\\ 
    &dx_{18}+dx_{45}, dx_{27}+dx_{36},dx_{28}+dx_{46},dx_{38}+dx_{47},dx_{37}-dx_{48},dx_{18}+dx_{27},\\ &dx_{12}+dx_{34}-dx_{56}-dx_{78},dx_{13}-dx_{24}-dx_{57}+dx_{68},dx_{14}+dx_{23}-dx_{58}-dx_{67}\rangle.
\end{align*}
    By direct computation, we find
\begin{align*}
    \alpha \diamond \Om 
    &= 4 (dx_{1245}-dx_{1348}-dx_{2567}+dx_{3678}) \\
    \beta \diamond \Om 
    &= 4 (dx_{1346}+dx_{1678}+dx_{2345}+dx_{2578}),
\end{align*}
    from which we deduce that
\[
(\alpha \diamond \Om) \ip_3 (\beta \diamond \Om)=32(-dx_{13}+dx_{68}),
\]
as required (compare with Remark \ref{symmetricspaceRem}, below). 

    Now, as $\Sp(2)\Sp(1)$-modules, we have the following decomposition: 
\[\Lm^2_{15}\otimes \Lm^2_{15}\cong (\R \oplus S^2E \oplus S^2_0(\Lm^2_0E)) \otimes (\R \oplus S^2H \oplus S^4 H).
\]
Observe that there is no $\Lm^2_{15}\cong\Lm^2_0 E \otimes S^2H$ component, but there are $\Lm^2_3 \cong S^2H$ and  $\Lm^2_{10}\cong S^2E$ components, so it follows that $(\alpha \diamond \Om) \ip_3 (\beta \diamond \Om) \in \Lm^2_3 \oplus \Lm^2_{10}.$ Indeed, in the above example, 
\[2(dx_{13}-dx_{68})=\om_{2}+(dx_{13}+dx_{24}-dx_{57}-dx_{68})\in \Lm^2_3 \oplus \Lm^2_{10}.\]
The fact that the last term lies in $\Lm^2_{10}$ is easily checked using (\ref{Lm210}), and this concludes the proof.
\end{proof}
\begin{remark}
\label{symmetricspaceRem}
    The right-hand side of (\ref{Lm215andLm215pairing}) corresponds, up to a constant factor, to the Lie bracket of $\alpha, \beta \in \Lm^2_{15} \subset \Lm^2 \cong \mathfrak{so}(8)$. Indeed it is well-known that the splitting $\mathfrak{so(8)}\cong(\mathfrak{so}(3)\oplus\mathfrak{so}(5))\oplus \Lm^2_{15}$ corresponds to the Lie algebra decomposition for the rank $3$ symmetric space $\frac{\SO(8)}{\SO(3)\times \SO(5)}$ (which is the double cover of $\frac{\SO(8)}{\Sp(2)\Sp(1)}$) and hence
\[[\Lm^2_{15},\Lm^2_{15}]\subset\mathfrak{so}(3)\oplus\mathfrak{so}(5)\]
    cf. \cite{Kobayashi1969a}*{Ch. XI. Prop 2.1}. Thus, Lemma \ref{Lm215andLm215pairingLemma} expresses essentially just a consequence of this fact.
\end{remark}

While in this paper we aim to deal with situations in which $T$ does not vanish identically, it is worth recalling some properties of torsion-free quaternion-K\"ahler structures (in all dimensions). Quaternion-K\"ahler manifolds are always Einstein i.e. $\Ric(g)= \lambda g$ cf. \cites{Besse2008, Salamon1982}. If moreover $\lambda=0$, then $M$ is locally a hyperK\"ahler manifold, so this case is usually excluded from the definition of quaternion-K\"ahler manifolds. If $\lambda >0$, then $M$ is compact, while if $\lambda <0$ then $M$ is non-compact. Poon and Salamon showed that the only compact quaternion-K\"ahler $8$-manifolds are the symmetric spaces \cite{PoonSalamon1991}:
$$
\mathbb{H}\mathbb{P}^2=\frac{\Sp(3)}{\Sp(2)\Sp(1)},\ \ \mathrm{Gr}_2(\C^4)=\frac{\SU(4)}{\mathrm{S}(\U(2)\U(2))}\qandq \frac{\mathrm{G}_2}{\SO(4)}.
$$ 
By contrast, LeBrun showed in \cite{LeBrunInfiniteQK} that there are infinitely many examples in the non-compact case, see also \cites{Besse2008, UdhavFowdar4} for other non-compact examples.

Furthermore, by analysing the decomposition
\begin{equation}
\label{eq: torsion decomposition}
    T \in \Lm^1_8 \otimes \Lm^2_{15} \cong \Lm^1_8 \oplus \Lm^3_{16}\oplus \Lm^3_{32} \oplus (K \otimes S^3 H),
\end{equation}
where $K$ is irreducible $\Sp(2)\Sp(1)$-module defined by $\Lm^2_0 E \otimes E \cong K \oplus E$, Swann proved that 
\begin{theorem}[\cite{Swann1991}]
	The intrinsic torsion $T=0$ if, and only if, $d\Om=0$ and the differential ideal  $\langle\om_1,\om_2,\om_3\rangle$ is algebraic.
\end{theorem}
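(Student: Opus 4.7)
The forward implication is essentially immediate from the definition. If $T=0$, then $\nabla\Omega=0$, and since exterior differentiation is obtained by skew-symmetrising $\nabla$, we get $d\Omega=0$. Moreover, because the Levi-Civita connection preserves the $\Sp(2)\Sp(1)$-structure it preserves the parallel rank-$3$ subbundle $\Lambda^2_3\subset\Lambda^2$, so locally $\nabla_X\omega_i=\sum_j c_{ij}(X)\omega_j$ for smooth functions $c_{ij}(X)$. Skew-symmetrising gives $d\omega_i=\sum_j\gamma_{ij}\wedge\omega_j$ for $1$-forms $\gamma_{ij}$, which is precisely the statement that the algebraic ideal generated by $\omega_1,\omega_2,\omega_3$ is closed under $d$.

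For the converse, the plan is to use representation theory combined with Schur's lemma. The intrinsic torsion decomposes into four irreducible $\Sp(2)\Sp(1)$-summands as in \eqref{eq: torsion decomposition}, so $T=T_1+T_2+T_3+T_4$ with $T_k\in W_k$, where $W_1=\Lambda^1_8$, $W_2=\Lambda^3_{16}$, $W_3=\Lambda^3_{32}$, $W_4=K\otimes S^3H$. The two hypotheses define $\Sp(2)\Sp(1)$-equivariant linear maps on $\Lambda^1_8\otimes\Lambda^2_{15}$: the first sends $T$ to $d\Omega$, the second sends $T$ to the transverse part of $(d\omega_i)_{i=1,2,3}$ modulo the ideal, i.e.\ to the image of $T$ under $\pi\circ d$ where $\pi\colon\Lambda^3\to\Lambda^3/(\Lambda^2_3\wedge\Lambda^1_8)$. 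By Schur's lemma each of these maps acts as a scalar on every $W_k$ it meets non-trivially, so checking that each $W_k$ is detected by at least one of them will force $T_k=0$ for all $k$.

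Concretely, I would first decompose $\Lambda^5\cong\Lambda^3$ and the quotient $\Lambda^3/(\Lambda^2_3\wedge\Lambda^1_8)$ into irreducible $\Sp(2)\Sp(1)$-components. Then, for each $W_k$, I would exhibit (either by picking a representative torsion element in $W_k$ and computing $d\Omega$ directly via $\nabla\Omega=T\diamond\Omega$, as in equation \eqref{eq: qktorsion}, or by a multiplicity count using the branching rules already used in Lemma \ref{Lm215andLm215pairingLemma}) that either the $d\Omega$-component or the $\pi\circ d$-component of $T_k$ is non-zero. The expected pattern is that $d\Omega=0$ alone kills three of the four summands (the ``closed'' type, familiar from the higher-dimensional Swann theorem), while the fourth component survives in dimension $8$ and is precisely the one detected by the failure of the ideal condition, which is the obstruction special to $\dim M=4n$ with $n=2$.

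The main obstacle will be step (ii): verifying, with minimum pain, that the equivariant map $\pi\circ d$ is non-zero on the remaining summand. A convenient shortcut is to compute $d\Omega = \sum_i d\omega_i\wedge \omega_i$ under the ideal hypothesis; using the relations between $\omega_i\wedge\omega_j$ and $\Omega$, one reduces $d\Omega$ to an expression lying inside $\Lambda^2_3\wedge\Lambda^3$, and then $d\Omega=0$ turns into an algebraic constraint on the remaining torsion component which, once combined with the ideal condition, forces it to vanish. This avoids a full module-by-module check and localises the representation-theoretic argument to a single summand.
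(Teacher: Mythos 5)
The paper does not give a proof of this statement at all: it is quoted verbatim as Swann's theorem, with the citation \cite{Swann1991} standing in for the proof. There is therefore no proof in the paper to compare against, and I can only assess your plan on its own terms.

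Your forward direction is sound. The converse strategy — decompose $T$ into irreducibles via \eqref{eq: torsion decomposition}, treat $T\mapsto d\Omega$ and $T\mapsto[d\omega_i]$ as $\Sp(2)\Sp(1)$-equivariant maps, and invoke Schur's lemma — is indeed the right shape of argument, and it is how Swann's proof proceeds. But two points need more care than you give them. First, the ``ideal condition'' map is not obviously a function of $T$ alone: $d\omega_i$ depends not only on the intrinsic torsion of the $\Sp(2)\Sp(1)$-structure but also on the local $\mathfrak{sp}(1)$-connection form that rotates the $\omega_i$ among themselves, so you must check that the class $[d\omega_i]\in\Lambda^3/(\Lambda^1\wedge\Lambda^2_3)$ is gauge-invariant before Schur applies. (Your notation $\Lambda^3/(\Lambda^2_3\wedge\Lambda^1_8)$ is the right object, but the invariance under local change of $\omega_i$-frame has to be established, not just asserted.) Second, the entire content of the theorem is the component-by-component non-vanishing check — verifying that the module $E\otimes S^3H$ (the one surviving $d\Omega=0$ only in $\dim 8$) is detected by the ideal condition — and your proposal defers this entirely to a computation you describe as ``the main obstacle'' and then sketches only a vague shortcut. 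As written, the shortcut via $d\Omega=\sum_i d\omega_i\wedge\omega_i$ does not obviously isolate the offending summand: after using the ideal hypothesis you are left with terms of the form $\gamma_{ij}\wedge\omega_j\wedge\omega_i$, and showing this forces the remaining torsion component to vanish still requires exactly the representation-theoretic bookkeeping you were trying to avoid. So the plan is credible, but the proof is not there; the nontrivial $80\%$ of the work is left as a stated intention.
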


While, for quaternion-K\"ahler structures in dimensions strictly greater than $8$, being torsion-free is equivalent to the $4$-form $\Om$ being closed, there do exist quaternion-K\"ahler $4$-forms in dimension $8$ which are closed but not torsion-free \cite{Salamon2001}. 
\begin{remark}
Since, in dimension $8$, $\Om$ is a self-dual $4$-form, if $d\Om=0$ then one often calls the induced quaternion-K\"ahler structure \emph{harmonic}  \cite{ContiMadsen2015}, which is is an altogether different meaning from our notion of harmonicity in the present context.
\end{remark}

$\Spin(7)$-structures are another type of geometric structure arising on $8$-manifolds by an algebraically special $4$-form, under favourable topological conditions. Throughout this article we shall see that there is a rather close relation between harmonic $\Sp(2)\Sp(1)$- and $\Spin(7)$-structures, although their respective algebraic properties are quite different. Next we describe some common features of $\Sp(2)\Sp(1)$- and $\Spin(7)$-structures which, to the best of our knowledge, have not so far been described in the literature.

\subsection{Bianchi identity for the torsion}
We now derive a `Bianchi-type identity' for the torsion tensor $T$, which will be useful later on in the derivation of a monotonicity formula. The terminology comes the fact that this identity arises due to the diffeomorphism-invariance of the torsion tensor, just as the usual Bianchi identity arises from the invariance of Riemann curvature, cf. \cite{Karigiannis2007}. 

\begin{proposition}
\label{bianchiidentityproposition}
    The torsion tensor $T$ satisfies the following `Bianchi-type identity'
\begin{equation}
\label{eq: bianchi identity}
    (\nabla_{X}T)(Y)-(\nabla_{Y}T)(X) = \pi^{2}_{15}(R(X,Y))+ \frac{1}{32}\big((\nabla_{Y} \Om)\ip_{3}(\nabla_{X}\Om)-(\nabla_{X} \Om)\ip_{3}(\nabla_{Y}\Om)\big),
\end{equation}
    where we are viewing $R(Y,X)$ as a $2$-form. Moreover,
\begin{equation}
\label{eq: bianchi identity2}
    \pi^{2}_{15}((\nabla_{X}T)(Y)-(\nabla_{Y}T)(X)) = \pi^{2}_{15}(R(X,Y)).
\end{equation}
\end{proposition}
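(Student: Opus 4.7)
The plan is to derive both identities by differentiating the defining relation $T(Y)=\tfrac{1}{32}(\nabla_Y\Om)\ip_3\Om$ from \eqref{eq: qktorsion}, then invoking the Ricci identity on $\Om$. The key technical point is that $\ip_3\colon\Lambda^4\otimes\Lambda^4\to\Lambda^2$ is a tensorial bilinear contraction built from the metric and exterior product, so it satisfies Leibniz with respect to the Levi-Civita connection, i.e.\ $\nabla_X(\eta\ip_3\kappa)=(\nabla_X\eta)\ip_3\kappa+\eta\ip_3(\nabla_X\kappa)$.

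Concretely, I would first expand $(\nabla_XT)(Y)=\nabla_X(T(Y))-T(\nabla_XY)$, apply Leibniz, and multiply through by $32$ to obtain
\[
32\,(\nabla_XT)(Y)=\bigl(\nabla_X\nabla_Y\Om-\nabla_{\nabla_XY}\Om\bigr)\ip_3\Om+(\nabla_Y\Om)\ip_3(\nabla_X\Om).
\]
Antisymmetrising in $X,Y$ and using that $\nabla$ is torsion-free (so $\nabla_XY-\nabla_YX=[X,Y]$), the three second-covariant-derivative terms collapse by Ricci into $(\nabla_X\nabla_Y\Om-\nabla_Y\nabla_X\Om-\nabla_{[X,Y]}\Om)\ip_3\Om=(R(X,Y)\diamond\Om)\ip_3\Om$, where I interpret $\diamond$ as the natural $\mathfrak{so}(8)$-action of $\Lambda^2$ on $\Lambda^4$ extending \eqref{diamondoperator}. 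By Lemma \ref{kernelofdiamond} this action vanishes on $\Lambda^2_3\oplus\Lambda^2_{10}$, and by Lemma \ref{triplecontractioninverse} it is inverted up to a factor of $32$ on the surviving $\Lambda^2_{15}$-component, yielding $(R(X,Y)\diamond\Om)\ip_3\Om=32\,\pi^2_{15}(R(X,Y))$. Dividing by $32$ produces \eqref{eq: bianchi identity}.

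Identity \eqref{eq: bianchi identity2} then follows by projecting \eqref{eq: bianchi identity} onto $\Lambda^2_{15}$: since $\nabla_X\Om=T(X)\diamond\Om$ with $T(X)\in\Lambda^2_{15}$, Lemma \ref{Lm215andLm215pairingLemma} forces both $(\nabla_Y\Om)\ip_3(\nabla_X\Om)$ and $(\nabla_X\Om)\ip_3(\nabla_Y\Om)$ to lie in $\Lambda^2_3\oplus\Lambda^2_{10}$, so their difference is annihilated by $\pi^2_{15}$. The main subtlety to watch is that $R(X,Y)\in\Lambda^2$ generally has nonzero components in all three summands $\Lambda^2_3\oplus\Lambda^2_{10}\oplus\Lambda^2_{15}$, forcing me to use the extended $\mathfrak{so}(8)$-action on $\Lambda^4$ rather than only the geometric $\diamond$ of \eqref{diamondoperator}; fortunately, the combined effect of Lemmas \ref{kernelofdiamond}--\ref{Lm215andLm215pairingLemma} is that only the $\Lambda^2_{15}$-projection $\pi^2_{15}(R(X,Y))$ ultimately appears, with no contribution from the kernel components. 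Beyond that, the argument is a direct bookkeeping of the Leibniz terms combined with the Ricci commutator.
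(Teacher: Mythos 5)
Your proof is correct and follows essentially the same route as the paper: differentiate $T=\tfrac{1}{32}(\nabla\Om)\ip_3\Om$ with Leibniz, antisymmetrise and apply Ricci, then use the triple-contraction inversion to identify the curvature term as $\pi^2_{15}(R(X,Y))$ and the representation-theoretic decomposition to kill the quadratic torsion term under $\pi^2_{15}$. The only cosmetic difference is that you invoke Lemma~\ref{Lm215andLm215pairingLemma} for the second identity, whereas the paper runs the module decomposition $\Lambda^{4+}_{15}\otimes\Lambda^{4+}_{15}$ directly, but that decomposition is exactly what the lemma's proof rests on, so the content is identical; your careful distinction between the restricted $\diamond$ of \eqref{diamondoperator} and the full $\mathfrak{so}(8)$-action on $\Lambda^4$, handled via Lemma~\ref{kernelofdiamond}, is a welcome clarification the paper glosses over by tacitly using the general form of Lemma~\ref{triplecontractioninverse} established within its proof.
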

\begin{proof}
    From (\ref{eq: qktorsion}), we have
    \begin{equation}
    (\nabla_{Y}T)(X)=\frac{1}{32}(\nabla^{2}_{Y,X}\Om) \ip_{3} \Om + \frac{1}{32}(\nabla_{X} \Om)\ip_{3}(\nabla_Y\Om),\end{equation}
    where we used the fact that $\nabla$ preserves $g$ and hence $\ip_{3}$. Skewsymmetrising in $X$ and $Y$, we get
\begin{equation}
    (\nabla_{X}T)(Y)-(\nabla_{Y}T)(X)=\frac{1}{32}((\nabla^{2}_{X,Y}-\nabla^{2}_{Y,X}) \Om) \ip_{3} \Om + \frac{1}{32}\big((\nabla_{Y} \Om)\ip_{3}(\nabla_{X}\Om)-(\nabla_{X} \Om)\ip_{3}(\nabla_{Y}\Om)\big).
\end{equation}
    and the first part of the Proposition now follows from Lemma \ref{triplecontractioninverse}. 
    
    For the second part,  observe that
\begin{equation}
    (\nabla_{Y} \Om)\ip_{3}(\nabla_{X}\Om) \in \Lm^{4+}_{15} \otimes \Lm^{4+}_{15} \cong (\R \oplus S^2_0(\Lm^2_0 E) \oplus S^2 E)\otimes (\R \oplus S^2 H \oplus S^4 H).\label{decompositionofsquareofOm215}
\end{equation}
In particular,  $(\nabla_{Y} \Om)\ip_{3}(\nabla_{X}\Om)$ has no component in $\Lm^{2}_{15} \cong \Lm^2_0 E \otimes S^2H$, and hence as a $2$-form it lies entirely in $S^2 H \oplus S^2 E\subset \Lm^2$; likewise for $(\nabla_{X} \Om)\ip_{3}(\nabla_{Y}\Om)$. This concludes the proof.
\end{proof}
An important consequence of the Bianchi identity (\ref{eq: bianchi identity}) is that the skew-symmetrisation of the covariant derivative of $T$ is fully controlled by the $15$-dimensional component of the curvature tensor (which depends only on the metric) and a quadratic term involving $T$.

\begin{remark}
    In \cite{Dwivedi2021} an analogous Bianchi-type identity is derived for the torsion of a $\Spin(7)$-structure, say determined by $\Phi$. The proof there is more computational in nature but the argument follows exactly as described above by replacing $\Om$ by $\Phi$ and $\Om^{2}_{15} \cong (\mathfrak{sp}(2)\oplus \mathfrak{sp}(1))^{\perp}$ by $\Om^2_7 \cong \mathfrak{spin(7)}^{\perp}$; so by analogy with (\ref{decompositionofsquareofOm215}), we have the $\Spin(7)$-module decomposition
\[\Lm^2_7 \otimes \Lm^2_7 \cong \R \oplus S^2_0(\R^7)\oplus \mathfrak{so}(7),\]
    corresponding in fact to the  representation of $\SO(7)\cong \Spin(7)/\mathbb{Z}_2$.

    More generally, suppose that we have the orthogonal reductive splitting  $\mathfrak{so}(n)=\mathfrak{h}\oplus\mathfrak{m}$ and that $H \cong \text{stab}(\xi)$ for some tensor $\xi$ (in our case $G=\SO(8)$, $H=\Sp(2)\Sp(1)$ and $\xi = \Om$), then we know that the torsion tensor $T \in \Om^1 \otimes \mathfrak{m}$ cf. \cite{Salamon1989}. The last term in (\ref{eq: bianchi identity}) essentially corresponds to the Lie bracket of $T(X)$ and $T(Y)$ and hence belongs to $[\mathfrak{m},\mathfrak{m}]$. So, if $\mathfrak{g}=\mathfrak{h}\oplus\mathfrak{m}$ corresponds to the Lie algebra decomposition of a symmetric space, then $[\mathfrak{m},\mathfrak{m}]\subset \mathfrak{h}$. This is indeed the case in our situation and also eg. when $G=\SO(8)$ with $H=\Spin(7)$ \cite{Dwivedi2021}, $G=\SO(2n)$ and $H=\U(n)$ \cite{HeLi2021}. Thus, such a Bianchi identity must always hold in those contexts. This insight allows us to interpret proofs in these various contexts from a unified perspective, and thereby avoid unnecessarily complicated computations, as we shall illustrate below.
\end{remark}

\begin{corollary}
    If the intrinsic torsion $T$ of $(M^8,g_{\Om},\Om)$ vanishes, then the holonomy group of $g_{\Om}$ is contained in $\Sp(2)\Sp(1)$. Moreover, $g_{\Om}$ is Einstein. 
\end{corollary}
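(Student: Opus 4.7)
The plan has two parts. For the holonomy reduction, the key observation is that formula \eqref{eq: qktorsion} rewrites the defining relation as $\nabla_{X}\Om = T(X) \diamond \Om$, while Lemma \ref{kernelofdiamond} identifies $\diamond$ as an isomorphism $\Lm^{2}_{15} \to \Lm^{4+}_{15}$. Hence $T \equiv 0$ is equivalent to $\nabla \Om \equiv 0$. Since $\nabla$ is the Levi-Civita connection of $g_\Om$, parallel transport preserves $\Om$ pointwise, so each holonomy element fixes $\Om$; thus $\Hol(g_\Om) \subseteq \Stab(\Om) = \Sp(2)\Sp(1) \subset \SO(8)$.

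For the Einstein property, I would first invoke the Bianchi-type identity \eqref{eq: bianchi identity2}: setting $T = 0$ collapses it to $\pi^{2}_{15}(R(X,Y)) = 0$, so that $R$, viewed as a $2$-form-valued $2$-form, sends every bivector into $\Lm^{2}_{3} \oplus \Lm^{2}_{10} \cong \mathfrak{sp}(1) \oplus \mathfrak{sp}(2)$. This is of course consistent with Ambrose--Singer applied to the holonomy reduction, but working directly from \eqref{eq: bianchi identity2} keeps the two assertions logically parallel and makes no use of the already-established first part.

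With the curvature thus constrained, the Einstein condition reduces to a purely representation-theoretic fact. The plan is to decompose the space of algebraic curvature tensors $\{R \in S^{2}(\Lm^{2}_{3} \oplus \Lm^{2}_{10}) : R \text{ satisfies the first Bianchi identity}\}$ into $\Sp(2)\Sp(1)$-irreducibles and observe that the Ricci contraction annihilates every summand except for scalar multiples of $g_\Om$, which forces $\Ric(g_\Om) = \lambda\, g_\Om$. I expect this last step to be the main obstacle --- it amounts to the classical Berger--Alekseevsky theorem for quaternion-K\"ahler manifolds --- and I would cite \cites{Besse2008, Salamon1982} rather than reproduce the full decomposition, since the harder analytic content of this paper lies elsewhere.
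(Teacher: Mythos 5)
Your proof is correct, and its first half takes a genuinely different and in fact simpler route than the paper. For the holonomy reduction, the paper sets $T=0$ in the Bianchi-type identity \eqref{eq: bianchi identity} to conclude $\pi^{2}_{15}(R(X,Y))=0$, and then invokes Ambrose--Singer to locate the holonomy algebra inside $\mathfrak{sp}(2)\oplus\mathfrak{sp}(1)$; you instead observe that $T=0$ is equivalent to $\nabla\Om=0$ (via the isomorphism $\diamond:\Lm^{2}_{15}\to\Lm^{4+}_{15}$ from Lemma \ref{kernelofdiamond}), so the holonomy group stabilises $\Om$ and is therefore contained in $\Sp(2)\Sp(1)$. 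This is cleaner: it bypasses both the Bianchi identity and Ambrose--Singer, and it directly constrains the full holonomy group rather than only the Lie algebra of the restricted holonomy. For the Einstein conclusion, your setup coincides with the paper's: use \eqref{eq: bianchi identity2} to force $R$ into $S^{2}(\mathfrak{sp}(1)\oplus\mathfrak{sp}(2))$, then intersect with the algebraic Bianchi kernel and show the Ricci contraction lands only in the trace line. The difference is that the paper actually carries out the representation-theoretic verification — decomposing $S^{2}(\mathfrak{sp}(2)\oplus\mathfrak{sp}(1))$, identifying $S^{2}_{0}(E\otimes H)\cong\Lm^{2}_{0}E\oplus(S^{2}E\otimes S^{2}H)$ as the home of the traceless Ricci, and checking that the skew-symmetrisation map $A$ is injective on exactly those summands — whereas you defer this to citations of \cites{Besse2008,Salamon1982}. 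That is acceptable (it is the classical Berger--Alekseevsky theorem), but it means your argument is not self-contained on the one step the paper treats as worth spelling out, and it loses the paper's side benefit of illustrating that a Bianchi-type identity alone is enough to rerun the ``special holonomy $\Rightarrow$ Einstein'' machinery in a unified way for other $H$.
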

\begin{proof}
    Setting $T=0$ in (\ref{eq: bianchi identity}) shows that $\pi^2_{15}(R(X,Y))=0$, for all $X,Y \in T_pM$, i.e. the curvature tensor $R$ corresponds to a section of $S^2(\mathfrak{sp}(2)\oplus\mathfrak{sp}(1)) \subset S^2(\Lm^2)$. The first claim now follows from the Ambrose-Singer Theorem. 
    
    To establish the second claim, recall that the curvature operator in fact lies in the kernel of the skew-symmetrisation map $$A:S^2(\mathfrak{sp}(2)\oplus\mathfrak{sp}(1)) \to \Lm^4$$ defined by wedging the $2$-forms in $\mathfrak{sp}(2)\oplus\mathfrak{sp}(1)$ cf. \cite{Salamon1989}; this corresponds to the symmetry of the algebraic Bianchi identity. On the other hand, we have the irreducible decomposition
\[S^2(\mathfrak{sp}(2)\oplus \mathfrak{sp}(1))\cong S^4E \oplus S^2_0(\Lm^2_0 E) \oplus \Lm^2_0 E\oplus \R \oplus S^2 E \otimes S^2H \oplus S^4 H \oplus \R \]
    where we again use the $E$-$H$ formalism of \eqref{eq: tangentbundle}. The traceless component of the Ricci tensor belongs to $S^2_0(E \otimes H) \cong \Lm^2_0 E \oplus S^2 E \otimes S^2H$. Comparing with the irreducible decomposition of $\Lm^4$, we see that the kernel of $A$ always contains a copy of $\R$ (the  curvature tensor of $\mathbb{H}\mathbb{P}^2$) and of $S^4E$. Testing a few simple examples shows that the map $A$ has a non-zero image in each irreducible component of $\Lm^4$; for instance one can consider the wedge products of $\om_i \in \mathfrak{sp}(1)$ and $dx_{12}-dx_{34} \in \mathfrak{sp}(2)$. Hence from Schur's Lemma it follows that $A$ must be an isomorphism on all the remaining modules and this gives the result.
\end{proof}
\begin{remark}
    A similar argument was used in \cite{Karigiannis2007}*{Corollary 4.12} to give a direct proof that $\mathrm{G}_2$-manifolds are indeed Ricci-flat (although the proof therein relies on a calculation in index notation for the $\mathrm{G}_2$-structure  $3$-form $\vp$, the essence is the same). Our argument above shows that in fact, given a Bianchi-type identity, a similar proof can be used to show Ricci flatness for other special holonomy groups, cf. \cite{Fadel2022}.
\end{remark}

\subsection{Relations between $\Sp(2)\Sp(1)$-structures and \texorpdfstring{$\Spin(7)$}{}-structures}
A $\Spin(7)$-structure on an $8$-manifold $M$ is determined by an algebraically special $4$-form $\Phi$ pointwise modelled on
\begin{equation}
\label{eq: spin7form}
    \Phi = \frac{1}{2}(-\om_1 \w \om_1+\om_2 \w \om_2+\om_3 \w \om_3 ),
\end{equation}
with $\om_i$ as defined above by (\ref{symplecticform1})-(\ref{symplecticform3}). Since $\Spin(7)$ is a subgroup of $\SO(8)$, it follows that $\Phi$ determines (up to homothety) both a metric $g_{\Phi}$ and a volume form 
\[\vol_{\Phi}=\frac{1}{14}\Phi \w \Phi.\] 
In the above pointwise coordinates, these coincide with the expressions (\ref{qkmetric}) and (\ref{qkvolumeform}), respectively. It is worth pointing out that a $\Spin(7)$-structure endows each tangent space of $M$ with the algebraic structure of the octonions $\mathbb{O}$, while an $\Sp(2)\Sp(1)$-structure endows the tangent space with the algebraic structure of the quaternic plane $\mathbb{H}^2.$

As demonstrated by Karigiannis in \cite{Karigiannis2005}*{Theorem 4.3.5}, the metric $g_\Phi$ can be explicitly extracted from $\Phi$ via the expression
\begin{equation}
\label{eq: spin7metricfromPhi}
    g_\Phi(X,X)^2
    =\frac{7^3}{6^{7/3}}
    \frac{(\det (((e_i \ip X \ip \Phi) \w (e_j \ip X \ip \Phi) \w (X \ip \Phi))(e_1,\dots,e_7)))^{1/3}}{(((X \ip \Phi)\w \Phi))(e_1,\dots,e_7))^3 },
\end{equation}
where $X,e_i \in T_p N$ form a positively oriented basis of $T_pN$, i.e. $$\vol_\Phi(X,e_1,\dots,e_7)>0.$$
In fact, \cite{Karigiannis2005}*{Lemma 4.3.3} shows that the right-hand side of (\ref{eq: spin7metricfromPhi}) is independent of the choice of extension of $X$ to the basis $\{X,e_i\}$
of $T_p N$: if one chooses a different extension $\{e_i'\}$ so that 
\[e'_i=P_{ij}e_j+Q_iX,\]
then the numerator of (\ref{eq: spin7metricfromPhi}) changes by a factor of $(\det(P)^2\det(P)^7)^{1/3}$, but so does also the denominator and hence the quotient is indeed invariant.

An inspection of the proof of the latter assertion reveals that the invariance of the right-hand side still holds if $\Phi$ is replaced by any $4$-form $\Upsilon$ which is non-degenerate, i.e. $\Upsilon \w \Upsilon >0$. In particular, this  leads to the following analogous result:
\begin{proposition}
\label{prop: metricfromOmproposition}
    The quaternion-K\"ahler metric $g_\Om$ is obtained from $\Om$ via the expression
\begin{equation}
    g_\Om(X,X)^2
    =\frac{5^3}{4 \cdot 6^{1/3}}
    \frac{(\det (((e_i \ip X \ip \Om) \w (e_j \ip X \ip \Om) \w (X \ip \Om))(e_1,\dots,e_7)))^{1/3}}{(((X \ip \Om)\w \Om))(e_1,\dots,e_7))^3 },\label{eq: qkmetricfromOm}
\end{equation}
    where $X \in T_pN$ and $\{X, e_i\}\in T_pN$ is any extension of $X$ to a postively oriented basis of $T_pN$.
\end{proposition}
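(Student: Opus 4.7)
The plan is to adapt, almost verbatim, the strategy used by Karigiannis in \cite{Karigiannis2005}*{Theorem 4.3.5} for the $\Spin(7)$ case, making the obvious substitutions of the $4$-form $\Phi$ by $\Om$ and adjusting the numerical constants to reflect the different normalisation $\Om \w \Om = 30\,\vol_\Om$ (versus $\Phi \w \Phi = 14\,\vol_\Phi$). The argument proceeds in three steps.

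First, I would observe that the right-hand side of \eqref{eq: qkmetricfromOm} is independent of the chosen extension $\{X,e_1,\dots,e_7\}$ of $X$ to a positively-oriented basis of $T_pM$. The proof of this is precisely the content of \cite{Karigiannis2005}*{Lemma 4.3.3}, and an inspection of that proof reveals that it uses \emph{only} the fact that $\Phi$ is a $4$-form of non-vanishing top degree square, together with elementary multilinear algebra; no further property of the $\Spin(7)$-structure enters. Since $\Om \w \Om = 30\,\vol_\Om > 0$ by \eqref{qkvolumeform}, the lemma applies verbatim. More precisely, under a change $e'_i = P_{ij}e_j + Q_i X$, the $7 \times 7$ matrix in the numerator transforms as $M'(e'_\bullet) = \det(P)\, P\, M(e_\bullet)\, P^T$ (the $Q_iX$ terms drop out because $X \ip X \ip \Om = 0$), so the determinant picks up a factor $\det(P)^9$; taking cube roots gives $\det(P)^3$, which is exactly cancelled by the denominator's $\det(P)^3$.

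Second, I would use equivariance to reduce to a model computation. Both sides of \eqref{eq: qkmetricfromOm} are $\Sp(2)\Sp(1)$-invariant functions of $X$, homogeneous of degree $4$ (one checks that the numerator is degree $21$ in $X$ and its cube root degree $7$, while the denominator cubed is degree $3$, yielding a ratio of degree $4$, matching $g_\Om(X,X)^2$). As $\Sp(2) \subset \Sp(2)\Sp(1)$ already acts transitively on the unit sphere $S^7 \subset \R^8$, it suffices to verify the identity at a single point $p$ with $\Om$ in the standard form \eqref{qk4form}, and for a single unit vector, which we may take to be $X = \partial_{x_1}$.

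Third, I would carry out the direct computation at the model with $X = \partial_{x_1}$, $e_i = \partial_{x_{i+1}}$, $i=1,\dots,7$. Using \eqref{symplecticform1}--\eqref{symplecticform3} one finds $X \ip \Om$ as an explicit combination of $3$-forms in $dx_2,\dots,dx_8$, and from this computes both the denominator $((X \ip \Om) \w \Om)(e_1,\dots,e_7)$ and the $7 \times 7$ Gram-type matrix
\[
M_{ij} = \bigl((e_i \ip X \ip \Om) \w (e_j \ip X \ip \Om) \w (X \ip \Om)\bigr)(e_1,\dots,e_7).
\]
Matching the two sides against $g_\Om(\partial_{x_1},\partial_{x_1})^2 = 1$ fixes the universal constant, which should work out to $\tfrac{5^3}{4\cdot 6^{1/3}}$. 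The main obstacle is precisely this last bookkeeping step: the $7 \times 7$ determinant computation is somewhat delicate, and one must take care that the diagonal entries (involving $(e_i \ip X \ip \Om) \w (e_i \ip X \ip \Om)$, which need not vanish since these are $3$-forms, not $2$-forms) and the off-diagonal entries combine to give the claimed constant. A representation-theoretic shortcut would be to note that the map $X \mapsto X \ip \Om \in \Lm^3_{48}$ is $\Sp(2)\Sp(1)$-equivariant and injective, so the matrix $M_{ij}$ may be computed on a block-diagonalised model; but a direct calculation at the standard point, carried out once and for all, is probably the cleanest route.
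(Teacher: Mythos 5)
Your three-step outline (invoke Karigiannis \cite{Karigiannis2005}*{Lemma 4.3.3} for independence of the extension, reduce by $\Sp(2)\Sp(1)$-invariance and homogeneity to a model unit vector, and then compute) is exactly the strategy the paper follows, and your degree and transformation bookkeeping for the extension-invariance step is correct. The one place where the paper does something you don't quite describe is the model computation itself: rather than grinding out an arbitrary $7\times 7$ determinant, the paper first proves a separate Lemma giving a closed formula for $(V\ip W\ip\Om)\w(V\ip W\ip\Om)\w\Om$ in terms of the quaternionic splitting $W=aV+W_1+W_2$, and then chooses the \emph{quaternionically adapted} extension $e_1=I_1X,\, e_2=I_2X,\, e_3=I_3X$, with $e_4,\dots,e_7$ orthogonal to $\langle X,I_1X,I_2X,I_3X\rangle$. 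With this choice the numerator matrix $M_{ij}$ becomes $\operatorname{diag}(18,18,18,-6,-6,-6,-6)$ and the determinant is read off. In the standard model with $X=\partial_{x_1}$ your basis $e_i=\partial_{x_{i+1}}$ happens to coincide with the adapted one, so your computation would eventually expose the same diagonal structure; but the paper's Lemma makes the structure manifest and is reused afterwards (to characterise the twistor sphere), which is what it buys over the brute-force route. One small slip: $e_i\ip X\ip\Om$ is a $2$-form, not a $3$-form. Its self-wedge can be nonzero because these $2$-forms are not simple (they have rank $\geq 4$); the parenthetical ``since these are $3$-forms, not $2$-forms'' should be corrected, though the conclusion you draw from it is right.
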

\begin{proof}
    We know from \cite{Karigiannis2005}*{Lemma 4.3.3} that the right-hand side of \eqref{eq: qkmetricfromOm} is independent of the extension $\{e_i\}$, and it is $\Sp(2)\Sp(1)$-invariant, so it suffices to check that \eqref{eq: qkmetricfromOm} holds for a preferred extension. Identifying $\Om$ at $p$ with the standard $\Om_0$ on $\R^8$, we let $X$ be an arbitrary vector, $e_i:=I_i(X)$, for $i=1,2,3$, and we choose $\{e_4,e_5,e_6,e_7\}$ to be orthogonal to $\{X,e_1,e_2,e_3\}$. The result will now follow from the next Lemma.
\end{proof}
\begin{lemma}
    Given $V,W\in \Gamma(TN)$, consider the orthogonal decomposition $W=a V + W_1 + W_2$, 
    where $W_1$ denotes the projection of $W$ onto $\langle I_1V, I_2V, I_3V\rangle$ and $W_2$ denotes the projection of $W$ onto the orthogonal complement of the quaternionic span of $V$. Then we have
\begin{equation}
\label{qkmetricon2plane}
    (V \ip W \ip \Om)\w (V \ip W \ip \Om) \w \Om 
    = 6\cdot g_\Om(V,V)\cdot (3\cdot g_\Om(W_1,W_1) - g_\Om(W_2,W_2))\cdot  \vol_\Om.
\end{equation}
In particular, given another vector field $U=bV+U_1+U_2$, by polarising the above we get
\begin{equation}
    (V \ip W \ip \Om)\w (V \ip U \ip \Om) \w \Om = 6\cdot g_\Om(V,V)\cdot (3\cdot g_\Om(W_1,U_1) - g_\Om(W_2,U_2))\cdot  \vol_\Om.
    \end{equation}
\end{lemma}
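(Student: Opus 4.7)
The identity is algebraic and pointwise, so I would fix a point $p\in N$ and work in the standard model \eqref{qk4form} for $\Om$ on $\R^8$. My plan has three steps: reducing the configuration by symmetry, using Schur's lemma to identify the general shape of the identity as a quadratic form in $W$, and pinning down two constants by direct computation.

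First, I would exploit the transitivity of $\Sp(2)\subset \Sp(2)\Sp(1)$ on the unit sphere in $\R^8$, together with the fact that both sides of \eqref{qkmetricon2plane} are $\Sp(2)\Sp(1)$-invariant polynomial expressions of bidegree $(2,2)$ in $(V,W)$. This allows me to assume without loss of generality that $V=\lambda e_1$ with $\lambda=|V|$; then $I_1V,I_2V,I_3V$ coincide with $\lambda e_2,\lambda e_3,\lambda e_4$ under the hyperK\"ahler triple carried by \eqref{symplecticform1}--\eqref{symplecticform3}, so that $W_1\in\langle e_2,e_3,e_4\rangle$ and $W_2\in\langle e_5,e_6,e_7,e_8\rangle$.

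Second, I would view the left-hand side, for fixed $V$, as a symmetric bilinear form $Q_V(W,W')$ on $T_pN$. Since $V\ip V\ip\Om=0$, this form annihilates $V$, so it descends to $V^\perp$. The stabiliser of $V$ in $\Sp(2)\Sp(1)$ acts on $V^\perp$ as $\SO(3)$ on $\langle I_1V,I_2V,I_3V\rangle$ (by conjugation on $\mathrm{Im}(\mathbb{H})$) and as $\SO(4)$ on its orthogonal complement (via the remaining quaternionic factor), and these two pieces are irreducible, inequivalent modules. By Schur's lemma, no invariant bilinear form couples them, and each restriction is a scalar multiple of $g_\Om$. Combined with homogeneity in $V$, this forces
\begin{equation*}
(V\ip W\ip\Om)\w(V\ip W\ip\Om)\w\Om=\bigl(\alpha\,g_\Om(W_1,W_1)+\beta\,g_\Om(W_2,W_2)\bigr)\,|V|^2\,\vol_\Om
\end{equation*}
for universal constants $\alpha,\beta$ independent of $V,W$.

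Finally, I would determine $\alpha$ and $\beta$ by evaluating on two explicit configurations. Taking $V=e_1$ and $W=e_2=I_1V$, a direct unfolding of \eqref{qk4form} gives $e_2\ip e_1\ip\Om=3\,dx_{34}+dx_{56}+dx_{78}$; squaring and pairing with $\Om$ using $\Om\w\Om=30\,\vol_\Om$ yields the coefficient $18$, so $\alpha=18=6\cdot 3$. Taking instead $V=e_1$ and $W=e_5$, one finds $e_5\ip e_1\ip\Om=-(dx_{26}+dx_{37}+dx_{48})$, and a similar wedge computation produces $-6$, so $\beta=-6$. Together these match the claimed expression $6\,g_\Om(V,V)\bigl(3g_\Om(W_1,W_1)-g_\Om(W_2,W_2)\bigr)$.

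The main obstacle is the sign bookkeeping in the two explicit model computations; everything else is automatic once the representation-theoretic reduction is in place. In particular, I expect this Schur-type argument to be much cleaner than writing out $\diamond$ and $\ip_3$ in full generality, and it has the advantage of clarifying why exactly two numerical constants need to be verified.
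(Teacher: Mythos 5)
Your proof is correct, and it takes a somewhat more structural route than the paper's. Both start the same way: reduce to the flat model on $\R^8$ and use transitivity of $\Sp(2)$ on the unit sphere to set $V=e_1$. The paper then invokes the residual $\Sp(1)\subset\Sp(2)$ stabilising $e_1$ to also normalise $W_2\propto e_5$, and declares the remaining identity (with $W_1$ still a general element of $\langle e_2,e_3,e_4\rangle$) a ``straightforward computation.'' You instead observe that the \emph{full} stabiliser of $e_1$ in $\Sp(2)\Sp(1)$ is $\Sp(1)\times\Sp(1)$ (mod $\Z_2$), acting irreducibly and inequivalently on $\langle I_1V,I_2V,I_3V\rangle\cong\R^3$ (adjoint action of one factor) and on $\langle e_5,\dots,e_8\rangle\cong\R^4$ (the $\Spin(4)\to\SO(4)$ action), so that Schur's lemma forces the invariant quadratic form $Q_V$ to be block-diagonal with two scalar entries. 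This reduces the verification to exactly two coefficient evaluations, which you carry out correctly: $e_1\ip e_2\ip\Om=\pm(3dx_{34}+dx_{56}+dx_{78})$ giving $18\,\vol_\Om$, and $e_1\ip e_5\ip\Om=\pm(dx_{26}+dx_{37}+dx_{48})$ giving $-6\,\vol_\Om$, matching $6\cdot 3$ and $6\cdot(-1)$. The upshot is that your argument is cleaner and explains \emph{a priori} why no cross-terms between $W_1$ and $W_2$ can appear and why precisely two constants must be computed, whereas the paper's direct approach leaves that structure implicit in the ``straightforward computation.'' One small bookkeeping remark: the intermediate $\Om\w\Om=30\,\vol_\Om$ identity you cite is not actually needed for the coefficient extraction; wedging the squared $2$-forms directly against the explicit $14$-term expansion of $\Om$ and collecting the three complementary monomials is what produces the $18$ and $-6$.
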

\begin{proof}
    Since the result is algebraic, we may work on $\R^8$ without loss of generality. Furthermore, $\Sp(2)$ acts transitively on the unit sphere, so we can assume $V=\partial_{x_1}$, and hence $I_1(V)=\partial_{x_2}$, $I_2(V)=\partial_{x_3}$ and $I_3(V)=\partial_{x_4}$. Since the stabiliser in $\Sp(2)$ of a unit vector is isomorphic to $\Sp(1)$, we can also set $W_2=\partial_{x_5}$.
The result now follows from a straightforward computation.
\end{proof}
\begin{remark}
    Given a unit vector $V \in T_p N$, the subspace $\langle I_1V, I_2V, I_3V \rangle$ can now be defined as the span of those unit vectors $W$ for which the right-hand side of (\ref{qkmetricon2plane}) is equal to $18 \vol_\Om$. This gives a concrete way of defining the $2$-sphere of almost complex structures, i.e. the twistor space, starting from $\Om$ only.
\end{remark}

\section{Harmonic  \texorpdfstring{$\Sp(2)\Sp(1)$}{}-structures}
\label{sec: harmonic qK structures}

In this section we describe how the notion of harmonic $\Sp(2)\Sp(1)$-structures arises from the general framework of harmonic $H$-structures introduced in \cite{Loubeau2019}. 

\subsection{Harmonic homogeneous  $\Sp(2)\Sp(1)$-sections}
\label{sec: harmonicsp2sp1sections}

We begin by describing $\Sp(2)\Sp(1)$-structures as sections of a homogeneous fibre bundle. We shall be brief here and refer the reader to \cite{Loubeau2019} for more details.
First we fix an oriented Riemannian $8$-manifold $(M,g)$ and denote by $p:F\to M$ its orthonormal frame bundle, with fibre $G=\SO(8)$. Since $\Sp(2)\Sp(1)\subset \SO(8)$, cf. Proposition \ref{prop: metricfromOmproposition},
the quotient $q: F\to N:=F/\Sp(2)\Sp(1)$ defines a principal $H=\Sp(2)\Sp(1)$-bundle, which in turn is a smooth fibre bundle $\pi : N \to M$ with  the homogeneous space $\SO(8)/\Sp(2)\Sp(1)$ as typical fibre. It follows that $\Sp(2)\Sp(1)$-structures compatible with the metric $g$ are determined by sections of $\pi$.

From the results in Section \ref{preliminariesonsp2sp1section}, we have the reductive splitting
$$
\mathfrak{so}(8)=\mathfrak{sp}(2)\oplus\mathfrak{sp}(1)\oplus\fm 
\qandq 
\mathrm{Ad}\Big|_{\Sp(2)\Sp(1)}\fm\subseteq\fm,
$$
where $\mathfrak{m}\cong \Lm^2_{15}(\R^8)$ as described above. In what follows we shall simply write $\Lm^2_{15}$ for $\Lm^2_{15}(\R^8)$. 
The Levi-Civita connection $\omega\in\Omega^1(F,\mathfrak{so}(8))$ on the frame bundle induces the splitting into vertical and horizontal components
$$
TN=\cV \oplus \cH
$$
with
$
\cV:=\ker\pi_*=q_*(\ker p_*) 
$
and
$
\cH:=q_*(\ker\omega).
$

\begin{multicols}{2}
\label{figure 1}
Let $\underline{\Lm^2_{15}}\to N$ be the vector bundle associated to $q$ with fibre $\Lm^2_{15}$, whose points are the $\Sp(2)\Sp(1)$-equivalence classes defined by the infinitesimal action of $w\in {\Lm^2_{15}}$ on $z\in F$ i.e.
$$
z\bullet w:=[(z,w)]_{H}=\cI(q_*(w_z^*)) \in \underline{\Lm^2_{15}} := F\times_H {\Lm^2_{15}},
$$
where $w_z^*$ is a fundamental left-invariant vector field.
\columnbreak

$$\xymatrix{
\llap{$z\in$ } F 
	\ar[dr]^q 
    \ar[dd]_p
    & & {\underline{\Lm^2_{15}}} \ar[dl] \\ 
  &\llap{$y\in$ } N 
  \ar[dl]_\pi 
  	& \ar[l]\cV
  	\ar[u] ^\cI  \\ 
\llap{$x\in$ }  M 
& & }$$
\end{multicols}
This defines a vector bundle isomorphism
\begin{equation}
\label{eq: isomorphism I:V->m}
\begin{array}{rcccl}
    \cI&:&\cV&\tilde{\rightarrow}&\underline{\Lm^2_{15}}\\
    &&q_*(w_z^*)&\mapsto&z\bullet w 
\end{array}
\end{equation}
and, since the ${\Lm^2_{15}}$-component $\omega_\fm \in \Omega^1(F,{\Lm^2_{15}})$ of the Levi-Civita connection is $\Sp(2)\Sp(1)$-equivariant and $q$-horizontal, it projects to a \emph{homogeneous connection form} $f \in\Omega^1(N,\underline{\Lm^2_{15}})$ defined by:
\begin{equation}
\label{eq: homog connection form f}
    f (q_*(Z)):= z \bullet \omega_\fm(Z) 
    \qforq
    Z\in T_zF.
\end{equation}

\newpage
\begin{multicols}{2}
Recall that a vector $v\in TN$ describes an incidence condition at a $\Sp(2)\Sp(1)$-class of frames. On $\pi$-vertical vectors, the connection form $f $ coincides with the canonical isomorphism (\ref{eq: isomorphism I:V->m}), while $\pi$-horizontal vectors lie in the kernel i.e. 
$$
f (v_y)=\cI(v_y^\cV), 
\qforq
v_y\in T_yN.
$$
\noindent NB.: in the adjacent diagram, $\fh = \mathfrak{sp}(2)\oplus\mathfrak{sp}(1)$.
\columnbreak
$$\xymatrix{
\llap{$v_z\in$ }TP 
	\ar[dr]^{q_*} 
    \ar[dd]_{p_*} 
    \ar[r]^(.35){\omega} 
    \ar@/^2pc/[r]^{\omega_\fm}
    & \Lm^2 = \Lm^2_{15} \ar@/^2pc/[r]^{z\bullet} \oplus \fh & {\underline{\Lm^2_{15}}} \\ 
  &TN 
  	\ar[dl]_{\pi_*} 
    \ar@{=}[r]
    \ar[ur]_f 
    & \cV \rlap{ $\oplus\;\cH$}
  		\ar[u] ^\cI  \\ 
TM & &     
}$$
\end{multicols}
Since $\Sp(2)\Sp(1)\cong \Stab(\Om_0)$, it follows that there exists a \emph{universal section} $\Xi\in\Gamma(N,\pi^*(\Lm^4))$ defined by
\begin{align}
\label{eq: universal section}
    \Xi(y):=y^*\Om_0,
    \qforq
    y\in N.
\end{align}
Explicitly, one assigns to the class of frames $y\in N$ the vector of $\Lm^4(M)_{\pi(y)}$, the coordinates of which are given by $\Om_0$ in any frame $z_{\pi(y)}$. It follows that, to each homogeneous section $\sigma\in\Gamma(M,N)$, one can associate a geometric structure $\Om\in\Gamma(M,\Lm^4)$ modelled on $\Om_0$ by
\begin{align}
\label{eq: Xi correspondence}
    \Om_\sigma:=\sigma^*\Xi
    =\Xi\circ \sigma.
\end{align}
Conversely, to a given geometric structure $\Om\in\Gamma(M,\Lm^4)$ stabilised by $\Sp(2)\Sp(1)$, one associates, at $x\in M$, an $\Sp(2)\Sp(1)$-class of frames of $T_x M$ which, in turn, identifies an element of $\pi^{-1}(x)$, i.e. an element $\sigma (x)$ in the fibre of $\pi : N \to M$ over $x\in M$. Thus, we unambiguously  obtain $\Om$ from $\sigma$, and vice-versa.

Assuming that $M$ is compact, endowing the fibres of $N$ with the metric induced by the bi-invariant metric on $\SO(8)$, and  considering the metric induced by $g$ on $\mathcal{H}$, we define the energy functional
\begin{equation}
    E(\sigma) := \frac{1}{2}\int_M |d^{\mathcal{V}}\sigma|^2\ \vol,\label{energyofsection}
\end{equation}
where $d^{\mathcal{V}}$ denotes the projection of $d\sigma$ on $\mathcal{V}$. From the aforementioned discussion, we have that $\mathcal{I}(d^{\mathcal{V}}\sigma)=f(d\sigma)$. Note also that, since the horizontal space $\mathcal{H}$ is endowed with the metric $g$, the total energy of $\sigma$ is equal to (\ref{energyofsection}) and a constant multiple of the volume of $M$ cf. \cite{Loubeau2019}*{Lemma 3}. In what follows, we shall denote by $\nabla$ the associated Levi-Civita connection to the latter metric on $N$. 
The results in \cite{Loubeau2019} show that (\ref{energyofsection}) corresponds to the $L^2$-norm on the intrinsic torsion associated to $\Om_{\sigma}$: 
\begin{equation}
    E(\Om_{\sigma}) := \frac{1}{2}\int_M |T|^2\ \vol.\label{energyofomega}
\end{equation}
For the reader's convenience we summarise a few key relevant results in \cite{Loubeau2019}:
\begin{proposition}\label{summaryofresultsLSE}
The covariant derivative of the universal section $\Xi$ is given by
\begin{equation}
\label{definitionoff}
    \nabla_Y \Xi = f(Y) \diamond \Xi,
    \qforq
    Y\in TN.
\end{equation}
Harmonic $\Sp(2)\Sp(1)$-structures, defined as the critical points of (\ref{energyofsection}),  satisfy the Euler-Lagrange equation:
\begin{equation}
    \tau^{\mathcal{V}}(\sigma):=\mathrm{tr}_g(\nabla^{\mathcal{V}}d^{\mathcal{V}}\sigma) =0.
\end{equation}
Furthermore,
\begin{equation}
    \mathcal{I}((\nabla^{\mathcal{V}}d^{\mathcal{V}}\sigma)(X,X))=(\nabla^{\om} (\sigma^*f))(X,X),
\end{equation}
where $X\in TM$ and $\nabla^{\om}$ denotes the induced connection on $\pi^*\Lm^2$.
\end{proposition}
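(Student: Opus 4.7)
The plan is to establish the three assertions in sequence, building on the bundle-theoretic setup of \S\ref{sec: harmonicsp2sp1sections}.

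For the first identity, I would decompose $Y \in T_yN$ along the Levi-Civita splitting $TN=\cV\oplus\cH$ and verify the equation on each summand. If $Y^{\cH}\in\cH$ is horizontal, its lift to $F$ lies in $\ker\omega$, so $f(Y^{\cH})=0$ by \eqref{eq: homog connection form f}; simultaneously, the horizontal lift generates a parallel frame of $M$ along the base curve $\pi(\cdot)$, hence $\Xi = (\cdot)^*\Om_0$ reads off as the Levi-Civita parallel transport of $\Xi(y)$ along that curve in $\Lm^4(M)$, whence $\nabla_{Y^{\cH}}\Xi=0$. For $Y^{\cV}\in\cV$, writing $Y^{\cV}=q_*(w^*_z)$ with $w=\omega_\fm(w^*_z)\in\Lm^2_{15}$, the vertical flow rotates the frame by $\exp(sw)\in\Sp(2)\Sp(1)$, so $\Xi$ varies within the fixed fibre $\Lm^4_{\pi(y)}(M)$ by the infinitesimal representation of $w\in\mathfrak{so}(8)$ on $\Om_0$; by the very definition \eqref{diamondoperator} this action coincides with $w\diamond\Om_0$, which matches $f(Y^{\cV})\diamond\Xi$ upon unwinding $f(Y^{\cV})=\cI(Y^{\cV})=z\bullet w$ from \eqref{eq: isomorphism I:V->m}. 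Linearity in $Y$ then combines the two cases.

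For the Euler-Lagrange equation, I would perform a standard first-variation computation. Taking a smooth compactly-supported family $\sigma_t$ with vertical infinitesimal variation $V:=\partial_t\sigma_t|_{t=0}\in\Gamma(\sigma^*\cV)$, differentiating \eqref{energyofsection} and integrating by parts using compatibility of $\nabla^{\cV}$ with the fibre metric yields
\begin{equation*}
    \frac{d}{dt}\Big|_{t=0} E(\sigma_t) = -\int_M \langle V,\,\tr_g(\nabla^{\cV}d^{\cV}\sigma)\rangle\,\vol,
\end{equation*}
so arbitrariness of $V$ forces $\tau^{\cV}(\sigma)=\tr_g(\nabla^{\cV}d^{\cV}\sigma)=0$ at any critical point.

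The third identity is the analytic bridge linking the variational equation on $(N,\cV)$ to the torsion data on $M$. The key input is that $f|_{\cV}=\cI$ by construction, so $\sigma^*f=\cI\circ d^{\cV}\sigma$ as $\underline{\Lm^2_{15}}$-valued $1$-forms on $M$; differentiating this relation along $X\in TM$ and using that the induced connection $\nabla^{\om}$ on $\pi^*\Lm^2$ restricts via $\cI$ to the intrinsic connection on $\cV$ yields the stated equality. The main obstacle in the whole argument is the first identity: one must handle cleanly the cascade of canonical identifications $\cV\cong\underline{\Lm^2_{15}}\subset\pi^*\Lm^2$ and match the Lie-algebra action of $\fm$ on $\Om_0$ with the intrinsically defined $\diamond$ operator of \eqref{diamondoperator}. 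Once this identification is secured, the remaining two claims reduce to standard variational and naturality arguments.
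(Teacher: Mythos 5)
The paper does not actually prove Proposition~\ref{summaryofresultsLSE}: it presents it as a ``summary of key relevant results'' from \cite{Loubeau2019} and gives only the citation, so there is no in-text argument to compare against. Your sketch follows the route one would expect from the general theory of harmonic homogeneous sections --- horizontal/vertical splitting for the first identity, first variation with integration by parts for the Euler--Lagrange equation, and transfer via $\cI$ for the third --- and the overall logic is sound.

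Two remarks are in order. First, a slip in the vertical-direction step: you write that ``the vertical flow rotates the frame by $\exp(sw)\in\Sp(2)\Sp(1)$''. For a vertical direction in $N=F/\Sp(2)\Sp(1)$, the generator $w$ lies in the reductive complement $\fm\cong\Lm^2_{15}$, not in $\fh=\mathfrak{sp}(2)\oplus\mathfrak{sp}(1)$; the whole point is that $\exp(sw)$ must move the $\Sp(2)\Sp(1)$-class of frames, whereas if $w\in\fh$ then $q_*(w_z^*)=0$ and the direction is not vertical in $N$ at all. The conclusion you draw (the derivative of $\Xi$ along the fibre is the infinitesimal $\so(8)$-action of $w$ on $\Om_0$, i.e.\ $w\diamond\Om_0$) is what is wanted --- only the group you attach to $\exp(sw)$ is wrong.

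Second, the third identity hinges on the claim that $\cI$ intertwines $\nabla^\cV$ on $\cV\subset TN$ with $\nabla^\omega$ on $\underline{\Lm^2_{15}}\subset\pi^*\Lm^2$, i.e.\ that $\cI$ is a parallel bundle isomorphism. This is the genuinely non-trivial ingredient: both connections are induced from the same connection $\omega$ on $F$, one via the reductive splitting $\so(8)=\fh\oplus\fm$ and the other via the associated-bundle construction $F\times_H\Lm^2_{15}$, and the compatibility of these two inductions through $z\bullet$ and the fundamental vector fields must be verified. You correctly \emph{state} that $\nabla^\omega$ restricts via $\cI$ to the connection on $\cV$, but a self-contained proof would need to argue it rather than assert it; this is precisely the content of the corresponding proof in \cite{Loubeau2019}.
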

The harmonic section flow, defined as the negative gradient flow of (\ref{energyofsection}), starting from an $\Sp(2)\Sp(1)$-structure defined by $\sigma_0$, is given by
\begin{gather}
	\frac{d\sigma_t}{dt}=\tau^{\mathcal{V}}(\sigma_t),\label{harmonicmapflowequation},\\
	\sigma_0 = \sigma(0).
\end{gather}
Appealing to Proposition \ref{summaryofresultsLSE} and using the map $\mathcal{I}$, we can then reinterpret the above flow more concretely in terms of a geometric flow for $\Om_t$ (see \S\ref{sec: harmonicQKflow} below). This is the same procedure that leads to the harmonic flows of $\mathrm{G}_2$-, $\Spin(7)$- and $\rU(n)$-structures in \cites{Dwivedi2019, Grigorian2019, Dwivedi2021, HeLi2021}.

\subsection{Examples of harmonic quaternion-K\"ahler  structures}
\label{examplesofharmonicQKsection}

In this section we construct several explicit examples of strictly harmonic QK structures i.e. harmonic QK structures which are not torsion-free. We also refer the reader to \S\ref{solutiononsu3section} below for an example on the Lie group $\SU(3)$. The main result of this section is that conformally parallel $\Sp(2)\Sp(1)$-structures are harmonic. In fact our proof applies to $\mathrm{G}_2$- and $\Spin(7)$-structures as well, thereby extending the result of Grigorian in the $\rG_2$ case, cf. \cite{Grigorian2019}*{Theorem 4.3}. 

It is well-known that if $(M,g)$ a complete Einstein manifold, aside from the round sphere, then $g$ is the unique Einstein metric in its conformal class (up to homothetic rescaling), cf. \cite{Kuhnel1995}. In particular, if $g$ is a quaternion-K\"ahler metric, then $f^{2}g$ cannot be an Einstein metric unless $f$ is constant. So any harmonic structure in the class $[f^2g]$, if any exists, cannot be torsion-free. While it is natural to expect that the conformally rescaled metric will converge back under the Ricci flow to the Einstein metric (which is a Ricci soliton), the latter behaviour cannot happen in our case, since the harmonic flow preserves the metric. So this naturally motivates searching for harmonic structures in such conformal classes. 

\subsubsection{Example on a flat torus}\label{exampleonflattorus}

Consider the flat torus $\mathbb{T}^8$ with the quaternion-K\"ahler structure $\Om_0$ defined by expression (\ref{qk4form}). We define a conformally flat quaternion-K\"ahler structure by 
\begin{equation}
\label{conformalqkonT8}
    {\Om}:=f(x_1)^4 \Om_0,
\end{equation}
and denote the associated $\Sp(2)\Sp(1)$ coframing by $e^i:=f(x_1) dx_i$ and its dual by $e_i:=f(x_1)^{-1}\partial_{x_i}$. In terms of decomposition (\ref{eq: torsion decomposition}), we know that the torsion $T$ takes values in $\Om^1_{8}$, and it is essentially determined by the $1$-form $df$.
Indeed a direct calculation shows that the torsion tensor is explicitly given by
\begin{align*}
    T(e_1)&=0,\\
    T(e_2) &= \frac{1}{4} \frac{d}{dx_1}(f(x_1)^{-1})(e^{12}+e^{34}-e^{56}-e^{78}),\\
    T(e_3) &= \frac{1}{4} \frac{d}{dx_1}(f(x_1)^{-1})(e^{13}-e^{24}-e^{57}+e^{68}),\\
    T(e_4) &= \frac{1}{4} \frac{d}{dx_1}(f(x_1)^{-1})(e^{14}+e^{23}-e^{58}-e^{67}),\\
    T(e_5) &= \frac{1}{4} \frac{d}{dx_1}(f(x_1)^{-1})(3e^{15}-e^{26}-e^{37}-e^{48}),\\
    T(e_6) &= \frac{1}{4} \frac{d}{dx_1}(f(x_1)^{-1})(3e^{16}+e^{25}+e^{38}-e^{47}),\\
    T(e_7) &= \frac{1}{4} \frac{d}{dx_1}(f(x_1)^{-1})(3e^{17}-e^{28}+e^{35}+e^{46}),\\
    T(e_8) &= \frac{1}{4} \frac{d}{dx_1}(f(x_1)^{-1})(3e^{18}+e^{27}-e^{36}+e^{45}).
\end{align*}
and from this one finds that
\[\nabla_{e_i}(T(e_i))=0, \qforq i=1,\dots8.
\]
Another simple computation shows that
\[\nabla_{\nabla_{e_i}e_i} \Om =0.\]
Combining the above, we have
\[\mathrm{div}(T):=\sum_{i=1}^8\nabla_{e_i}(T(e_i))-T(\nabla_{e_i}e_i)=0.\]
Thus, we have just shown that ${\Om}$, as defined by (\ref{conformalqkonT8}), determines a harmonic section for the conformally flat metric $f(x_1)^2 g_{\mathbb{T}^8}$. Observe that $T=0$ if and only if $f(x_1)$ is constant, as expected.

\subsubsection{Example on the hyperbolic quaternionic plane}
\label{sec: example2}

Let us now consider the hyperbolic quaternionic plane $\mathcal{H}\mathbb{H}^2$. Topologically $\mathcal{H}\mathbb{H}^2$ is diffeomorphic to $\R^8$, but as a Riemannian manifold it is the symmetric space $\frac{\Sp(2,1)}{\Sp(2)\Sp(1)}$ i.e. the non-compact dual of $\mathbb{H}\mathbb{P}^2=\frac{\Sp(3)}{\Sp(2)\Sp(1)}$. In particular, it has holonomy group \textit{equal} to $\Sp(2)\Sp(1)$. As a cohomogeneity one manifold, under the action of the quaternion Heisenberg group, whose Lie algebra is given by
\[(0,0,0,0,12+23,13-24,14+23),\]
we can express the quaternion-K\"ahler metric as
\begin{equation}
g_{\mathcal{H}\mathbb{H}^2}=\frac{1}{16(1-s)^2}ds^2+\frac{1}{(1-s)}(\al_1^2+\al_2^2+\al_3^2)+\frac{1}{(1-s)^{1/2}}(dx_1^2+dx_2^2+dx_3^2+dx_4^2),\label{hyperbolicmetric}
\end{equation}
where the $1$-forms $\al_i$ are defined by
\begin{align*}
	\al_1 &= dx_5 - x_2 dx_1+ x_1 dx_2- x_4 dx_3+x_3 dx_4,\\
	\al_2 &= dx_6 - x_3 dx_1+ x_4 dx_2+ x_1 dx_3- x_2 dx_4,\\
	\al_3 &= dx_7 - x_4 dx_1- x_3 dx_2+ x_2 dx_3+ x_1 dx_4,
\end{align*}
and $s\in(-\infty,1)$, cf. \cites{UdhavFowdar4, Gibbons01}. As in the previous example, we consider conformal metrics given by $f(s)^2 g_{\mathcal{H}\mathbb{H}^2}$. We define an $\Sp(2)\Sp(1)$-coframing by setting 
$$
e^i=\frac{f(s)}{(1-s)^{1/4}} dx_{i}
\qforq
i=1,2,3,4,
$$ 
and 
$$
e^5=\frac{f(s)}{4(1-s)} dt
\qandq
e^{i+5}=\frac{f(s)}{(1-s)^{1/2}}\al_i
\qforq
i=1,2,3.
$$ 
One again verifies that
$\nabla_{\nabla_{e_i}e_i} \Om =0$, 
and that the torsion tensor is explicitly given by
\begin{align*}
	T(e_1) &=  (s-1)\frac{d}{ds}(f(s)^{-1})(3e^{15}-e^{26}-e^{37}-e^{48}),\\
	T(e_2) &= (s-1)\frac{d}{ds}(f(s)^{-1})(e^{16}+3e^{25}-e^{38}+e^{47}),\\
	T(e_3) &= (s-1)\frac{d}{ds}(f(s)^{-1})(e^{17}+e^{28}+3e^{35}-e^{46}),\\
	T(e_4) &= (s-1)\frac{d}{ds}(f(s)^{-1})(e^{18}-e^{27}+e^{36}+3e^{45}),\\
	T(e_5) &= 0,\\
	T(e_6) &= (s-1)\frac{d}{ds}(f(s)^{-1})(e^{12}+e^{34}-e^{56}-e^{78}),\\
	T(e_7) &= (s-1)\frac{d}{ds}(f(s)^{-1})(e^{13}-e^{24}-e^{57}+e^{68}),\\
	T(e_8) &= (s-1)\frac{d}{ds}(f(s)^{-1})(e^{14}+e^{23}-e^{58}-e^{67}).
\end{align*}
As above we find that $\nabla_{e_i}(T(e_i))=0$, for each $i$, and hence
\[\mathrm{div}(T)=0.\]
Thus, the above conformally parallel quaternion-K\"ahler structures indeed define harmonic sections.

The above examples seem to suggest that conformally parallel $\Sp(2)\Sp(1)$-structures might always be harmonic; in fact this is known to be true in the $\rG_2$ case, cf. \cite{Grigorian2019}*{Theorem 4.3}. 
We shall now show that this holds for a larger class of $H$-structures, including $\Sp(2)\Sp(1)$ and $\Spin(7)$. 

\begin{proposition}\label{torsionofconformallyparallelproposition}
Let $(M,g,\xi)$ be a Riemannian manifold with holonomy group contained in $H\subset \SO(n)$, where $\xi$ is a parallel $k$-form which determines the holonomy reduction. Consider the conformal data given by $\tilde{g}=e^{2f}g$ and $\tilde{\xi}=e^{kf}\xi$ on $M$. The intrinsic torsion $\tilde{T}$, defined by 
\begin{equation}
    \tilde{\nabla}_X \tilde{\xi}= \tilde{T}(X)\  \tilde{\diamond}\ \tilde{\xi},
\end{equation}
satisfies
\begin{align*}
(\tilde{\nabla}_Y \tilde{T})(X)\ =\ &\pi_{\mathfrak{m}}(\tilde{X}^\flat \w \nabla_Y df+ g(Y,\nabla f) \tilde{X}^\flat \w df - g(X,\nabla f) \tilde{Y}^\flat \w df)\\
&+(\tilde{Y}^\flat \w df)\ \tilde{\diamond}\ \pi_{\mathfrak{m}} (\tilde{X}^\flat \w df) - 2g (Y,\nabla f) \pi_{\mathfrak{m}}(\tilde{X}^\flat \w df),
\end{align*}
where $\pi_{\mathfrak{m}}$ denotes the orthogonal projection in $\Lm^2\cong \mathfrak{h}\oplus\mathfrak{m}$ cf. \S\ref{sec: harmonicsp2sp1sections} and \cite{Loubeau2019}*{Part I}. 
\end{proposition}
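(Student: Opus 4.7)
The plan is to compute $\tilde\nabla_X\tilde\xi$ directly from the conformal change of the Levi-Civita connection and $\nabla\xi = 0$, read off $\tilde T$, and then differentiate once more, carefully tracking the conformal Christoffel/Hessian corrections together with the non-$\tilde\nabla$-parallelism of the projection $\pi_{\mathfrak m}$.

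\textbf{Step 1 (extract the torsion).} The standard conformal identity $(\tilde\nabla_X - \nabla_X)Y = df(X)Y + df(Y)X - g(X,Y)\nabla f$, lifted as a derivation to $k$-forms, gives
\[\tilde\nabla_X \xi = -k\,df(X)\,\xi + (X^\flat\wedge df)\diamond\xi\]
(with $\diamond$ and $\sharp$ taken with respect to $g$). Multiplying $\xi$ by $e^{kf}$ to form $\tilde\xi$ cancels the scalar term, and rescaling $X^\flat = e^{-2f}\tilde{X}^\flat$ to pass from $\diamond$ to $\tilde\diamond$ then yields $\tilde\nabla_X\tilde\xi = (\tilde{X}^\flat\wedge df)\,\tilde\diamond\,\tilde\xi$. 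Since $\mathfrak h$ annihilates $\tilde\xi$, comparison with the defining relation $\tilde\nabla_X\tilde\xi = \tilde T(X)\,\tilde\diamond\,\tilde\xi$ fixes $\tilde T(X) = \pi_{\mathfrak m}(\tilde{X}^\flat\wedge df)$ (up to a global sign set by convention).

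\textbf{Step 2 (differentiate).} Expand $(\tilde\nabla_Y\tilde T)(X) = \tilde\nabla_Y(\tilde T(X)) - \tilde T(\tilde\nabla_Y X)$ and treat each piece. For $\tilde T(\tilde\nabla_Y X)$, I substitute the conformal identity for $\tilde\nabla_Y X$, $\tilde g$-lower, and wedge with $df$: the $\nabla f$ contribution vanishes (since $df\wedge df = 0$), producing $\pi_{\mathfrak m}((\nabla_Y X)^{\tilde\flat}\wedge df)$ plus linear-in-$df$ terms of the form $df(Y)\,\pi_{\mathfrak m}(\tilde{X}^\flat\wedge df)$ and $df(X)\,\pi_{\mathfrak m}(\tilde{Y}^\flat\wedge df)$. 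For $\tilde\nabla_Y(\pi_{\mathfrak m}(\tilde{X}^\flat\wedge df))$, the main technical point is the commutator of $\tilde\nabla_Y$ with $\pi_{\mathfrak m}$: because the canonical $H$-connection $\tilde\nabla - \tilde T$ preserves the reductive splitting $\Lambda^2 = \mathfrak h\oplus\mathfrak m$, one has
\[\tilde\nabla_Y \pi_{\mathfrak m}(\alpha) = \pi_{\mathfrak m}(\tilde\nabla_Y\alpha) + [\tilde T(Y),\pi_{\mathfrak m}\alpha] - \pi_{\mathfrak m}[\tilde T(Y),\alpha],\]
where $[\cdot,\cdot]$ is the $\mathfrak{so}(n)$-bracket. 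Applied to $\alpha = \tilde{X}^\flat\wedge df$, and using that $\tilde\diamond$ restricted to pairs of $2$-forms agrees (up to sign) with the $\mathfrak{so}(n)$-bracket together with the symmetric-pair property $[\mathfrak m,\mathfrak m]\subseteq \mathfrak h$ (which holds by Remark \ref{symmetricspaceRem}, and analogously for $\Spin(7)$ and $\rG_2$), this isolates exactly the diamond term $(\tilde{Y}^\flat\wedge df)\,\tilde\diamond\,\pi_{\mathfrak m}(\tilde{X}^\flat\wedge df)$ appearing in the statement.

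\textbf{Step 3 (assemble).} Expanding $\tilde\nabla_Y(\tilde{X}^\flat\wedge df) = (\tilde\nabla_Y X)^{\tilde\flat}\wedge df + \tilde{X}^\flat\wedge \tilde\nabla_Y df$ by the product rule, and substituting the conformal Hessian law $\tilde\nabla_Y df = \nabla_Y df - 2\,df(Y)\,df + |\nabla f|^2 Y^\flat$, replaces $\tilde\nabla_Y df$ by $\nabla_Y df$ plus lower-order conformal corrections. The $|\nabla f|^2 Y^\flat$ piece recombines with the $df(X)\tilde{Y}^\flat\wedge df$ fragment from $\tilde T(\tilde\nabla_Y X)$ into the asymmetric $-g(X,\nabla f)\tilde{Y}^\flat\wedge df$ term, while the $-2df(Y)df$ piece distributes between the $g(Y,\nabla f)\tilde{X}^\flat\wedge df$ inside the $\pi_{\mathfrak m}$-bracket and the explicit $-2g(Y,\nabla f)\pi_{\mathfrak m}(\tilde{X}^\flat\wedge df)$ outside. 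The main obstacle is the $\pi_{\mathfrak m}$-commutator in Step 2: correctly identifying it with the diamond term relies essentially on $[\mathfrak m,\mathfrak m]\subseteq \mathfrak h$, which is what makes the same argument apply uniformly to $\Sp(2)\Sp(1)$, $\rG_2$ and $\Spin(7)$. Once that is clarified, the remainder is bookkeeping in conformal corrections.
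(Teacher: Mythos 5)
Your Step 1 coincides with the paper's: apply the conformal-change identity for $p$-forms, use $\nabla\xi=0$, absorb the scalar piece into $e^{kf}$, and convert $\diamond\mapsto\tilde\diamond$ via the $e^{2f}$ rescaling of $\sharp$, to obtain $\tilde T(X)=\pi_{\mathfrak m}(\tilde X^\flat\wedge df)$. In Step 2, however, you genuinely diverge. The paper keeps everything in terms of the original Levi-Civita connection $\nabla$, which commutes with $\pi_{\mathfrak m}$ for free (holonomy of $g$ in $H$), applies the same conformal identity to the $2$-form $\pi_{\mathfrak m}(\tilde X^\flat\wedge df)$ and once more to the $1$-form $\tilde X^\flat$, and reads off the result. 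You instead work directly with $\tilde\nabla$, replacing the paper's use of $\nabla\pi_{\mathfrak m}=\pi_{\mathfrak m}\nabla$ by the commutation rule through the canonical $\tilde g$-compatible $H$-connection $\tilde\nabla-[\tilde T(\cdot),\cdot]$. Both routes close: your commutator $[\tilde T(Y),\pi_{\mathfrak m}\alpha]-\pi_{\mathfrak m}[\tilde T(Y),\alpha]$ equals the paper's $[\tilde Y^\flat\wedge df,\pi_{\mathfrak m}\alpha]-\pi_{\mathfrak m}[\tilde Y^\flat\wedge df,\alpha]$ whenever the splitting is reductive. Your approach is perhaps slightly more intrinsic to the target structure, while the paper's is cleaner because it never leaves the torsion-free side where $\pi_{\mathfrak m}$ is trivially parallel.

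Two caveats worth flagging. First, the statement that the commutation correction ``isolates exactly the diamond term $(\tilde Y^\flat\wedge df)\,\tilde\diamond\,\pi_{\mathfrak m}(\tilde X^\flat\wedge df)$'' is not literally right: the correction is $[\beta,\pi_{\mathfrak m}\alpha]-\pi_{\mathfrak m}[\beta,\alpha]$ with $\beta=\tilde Y^\flat\wedge df$, and the residual piece $-\pi_{\mathfrak m}[\beta,\alpha] = -g(X,\nabla f)\pi_{\mathfrak m}(\tilde Y^\flat\wedge df) - |\nabla f|^2\pi_{\mathfrak m}(\tilde X^\flat\wedge Y^\flat) + g(Y,\nabla f)\pi_{\mathfrak m}(\tilde X^\flat\wedge df)$ is exactly what cancels the $|\nabla f|^2 Y^\flat$ term from the conformal Hessian and adjusts the $df(X)$, $df(Y)$ coefficients to the stated form. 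So the bookkeeping story in your Step 3 (that the $|\nabla f|^2 Y^\flat$ piece recombines with a fragment from $\tilde T(\tilde\nabla_Y X)$) is not where the cancellation actually occurs; the calculation does work out, but the narration should be corrected. Second, the invocation of $[\mathfrak m,\mathfrak m]\subseteq\mathfrak h$ is not needed: the identity $[\tilde T(Y),\pi_{\mathfrak m}\alpha]-\pi_{\mathfrak m}[\tilde T(Y),\alpha]=[\beta,\pi_{\mathfrak m}\alpha]-\pi_{\mathfrak m}[\beta,\alpha]$ only requires reductivity $[\mathfrak h,\mathfrak m]\subseteq\mathfrak m$, and the paper's proof makes no use of the symmetric-pair property either. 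It is true in the cases of interest, but citing it as the essential ingredient is a red herring and unnecessarily restricts the scope of the argument.
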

\begin{proof}
    Let $\tilde{\nabla}$ (respectively $\nabla$) denote the Levi-Civita connection of $\tilde{g}$ (respectively $g$). For any $p$-form $\al$, we have
\begin{equation}
\label{conformalidentity}
    \tilde{\nabla}_X\al = \nabla_X\al+(X^\flat \w df) \diamond \al - pg(X,\nabla f)\al.
\end{equation}
Applying the above to $\al=\tilde{\xi}$ and using the fact that $\nabla \xi=0$, we have that 
\begin{equation}
    \tilde{\nabla}_X \tilde{\xi}= (\tilde{X}^\flat \w df)\  \tilde{\diamond}\ \tilde{\xi},
\end{equation}
where we decorate with $\tilde{\ }$ quantities defined with respect to $\tilde{g}$, and we also used that $(X^\flat \w df) \diamond \al = (\tilde{X}^\flat \w df)\ \tilde{\diamond}\ \al$. It now follows that the intrinsic torsion of $\tilde{\xi}$ is given by 
\begin{equation}
    \tilde{T}(X) = \pi_{\mathfrak{m}}(\tilde{X}^\flat \w df).
\end{equation}
It is worth pointing out that the projection map $\pi_{\mathfrak{m}}$ only depends on the conformal class of $\xi$, so there is no ambiguity here. Moreover, such a formula for the intrinsic torsion was indeed to be expected, because $\tilde{T}$ vanishes if and only if $f$ is constant, so $\tilde{T}$ has to correspond to some pairing between $\tilde{g}$ and $df$. The reader can also observe this in the explicit examples given in \S\S\ref{exampleonflattorus}, \ref{sec: example2}.

Since $g$ has holonomy contained in $H$, it follows that $\nabla$ and $\pi_{\mathfrak{m}}$ commute. Hence applying (\ref{conformalidentity}) again, with $\al=\pi_{\mathfrak{m}}(\tilde{X}^\flat \w df)$, gives
\begin{equation*}
    \tilde{\nabla}_Y(\pi_{\mathfrak{m}}(\tilde{X}^\flat \w df)) =\pi_{\mathfrak{m}}(\nabla_Y\tilde{X}^\flat \w df+\tilde{X}^\flat \w \nabla_Y df)
    +(\tilde{Y}^\flat \w df)\ \tilde{\diamond}\ \pi_{\mathfrak{m}} (\tilde{X}^\flat \w df) - 2g (Y,\nabla f) \pi^2_{15}(\tilde{X}^\flat \w df).
\end{equation*}
Using (\ref{conformalidentity}) yet again, with $\al=\tilde{X}^\flat$, and substituting in the above yields the result.
\end{proof}

\begin{theorem}
\label{conformalimpliesharmonictheorem}
    Let $(M,\tilde{g},\tilde{\Om})$ be a locally conformally parallel quaternion-K\"ahler manifold i.e. there exists locally a function $f$ such that $\Om=e^{-4f}\tilde{\Om}$ defines a (local) torsion-free quaternion-K\"ahler structure on $M$. Then $\tilde{\Om}$ defines a harmonic $\Sp(2)\Sp(1)$-structure.
\end{theorem}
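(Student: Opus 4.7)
My plan is to verify the harmonicity condition, i.e.\ $\sum_i(\tilde\nabla_{e_i}\tilde T)(e_i)=0$ in a $\tilde g$-orthonormal frame $\{e_i\}$, by directly unpacking the formula for $(\tilde\nabla_Y\tilde T)(X)$ supplied by Proposition~\ref{torsionofconformallyparallelproposition} and showing that every piece vanishes after tracing.

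Substituting $X=Y=e_i$ and summing, I expect three of the five contributions to disappear almost immediately. The cross pair $g(Y,\nabla f)\tilde X^\flat\w df - g(X,\nabla f)\tilde Y^\flat\w df$ cancels termwise on the diagonal $X=Y$. The piece $\sum_i\pi^2_{15}(\tilde e_i^\flat\w\nabla_{e_i}df)$ reduces to $\pi^2_{15}(d(df))=0$ via the universal identity $d\alpha=\sum_i\theta^i\w\nabla_{X_i}\alpha$ applied to the exact 1-form $\alpha=df$ (valid for any torsion-free connection and any frame with dual coframe). Exploiting the expansion $df=\sum_i(e_if)\tilde e_i^\flat$ in the $\tilde g$-dual coframe collapses the scalar piece $-2\sum_ig(e_i,\nabla f)\pi^2_{15}(\tilde e_i^\flat\w df)$ to $-2\pi^2_{15}(df\w df)=0$.

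The sole surviving contribution is then
\[
\mathcal{J}\ :=\ \sum_i(\tilde e_i^\flat\w df)\ \tilde\diamond\ \pi^2_{15}(\tilde e_i^\flat\w df),
\]
and showing $\mathcal{J}=0$ will be the main obstacle. My plan is to view $\mathcal{J}$ as the value on $df\otimes df$ of a symmetric $\Sp(2)\Sp(1)$-equivariant bilinear map $\Psi\co S^2(T^*M)\to\Lm^2$. Using the $E$-$H$ formalism of \eqref{eq: tangentbundle} one has
\[
S^2(T^*M)_\C\ \cong\ \C\oplus\Lm^2_0E\oplus(S^2E\otimes S^2H),
\qandq
\Lm^2_\C\ \cong\ S^2H\oplus S^2E\oplus(\Lm^2_0E\otimes S^2H),
\]
and these two decompositions share \emph{no} irreducible $\Sp(2)\Sp(1)$-summand in common. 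By Schur's Lemma $\Psi\equiv 0$, and in particular $\mathcal{J}=0$. Combining with the three easy reductions above gives $\mathrm{div}(\tilde T)=0$ and hence harmonicity of $\tilde\Om$.

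As a sanity check and fallback, one could instead invoke $\Sp(2)$-transitivity on the unit tangent sphere to normalise pointwise to $df|_p=\lambda\,\tilde e^1$ and evaluate $\mathcal{J}$ by direct computation in a standard frame, in the spirit of the explicit verifications of \S\ref{exampleonflattorus} and \S\ref{sec: example2}. This more computational route mirrors the strategy of \cite{Grigorian2019}*{Theorem 4.3} in the $\rG_2$ case and should adapt, essentially without change, to $\Spin(7)$; indeed, the representation-theoretic step is the only ingredient sensitive to the choice of $H\subset\SO(8)$.
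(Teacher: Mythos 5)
Your proof is correct, and it takes a genuinely different route on the key computational step. You start, as the paper does, from Proposition~\ref{torsionofconformallyparallelproposition} with $\xi = \Om$, and you dispose of the Hessian piece $\sum_i\pi^2_{15}(\tilde e_i^\flat\w\nabla_{e_i}df)$ via $d^2f=0$ exactly as the paper does. You then note, correctly, that the antisymmetric pair cancels on the diagonal and the scalar piece collapses to a multiple of $\pi^2_{15}(df\w df)=0$; the paper silently absorbs both of these observations. The real divergence is in the treatment of the remaining term $\mathcal J = \sum_i(\tilde e_i^\flat\w df)\ \tilde\diamond\ \pi^2_{15}(\tilde e_i^\flat\w df)$: the paper normalises $\nabla f = c\,\partial_{x_1}$ at a point using $\Sp(2)$-transitivity on the unit sphere and verifies vanishing by direct computation in the standard model, whereas you observe that $\mathcal J$ is the evaluation on the symmetric tensor $df\otimes df$ of an $\Sp(2)\Sp(1)$-equivariant map $S^2(T^*M)\to\Lm^2$, which must vanish by Schur's Lemma because $S^2(T^*M)\cong\C\oplus\Lm^2_0E\oplus(S^2E\otimes S^2H)$ and $\Lm^2\cong S^2H\oplus S^2E\oplus(\Lm^2_0E\otimes S^2H)$ share no common irreducible. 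This is cleaner and more in the spirit of the representation-theoretic arguments already employed in the paper (e.g. Lemma~\ref{kernelofdiamond}, Lemma~\ref{Lm215andLm215pairingLemma}, Proposition~\ref{bianchiidentityproposition}). It also makes the transfer to the $\rG_2$ and $\Spin(7)$ cases more transparent: there the only thing to check is whether $S^2(\R^n)$ and $\fm = \fh^\perp$ share an irreducible $H$-summand, whereas the paper's pointwise computation must be redone separately for each $H$.
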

\begin{proof}
    Taking $\xi=\Om$ in Proposition \ref{torsionofconformallyparallelproposition}, we have
\begin{align}
    (\tilde{\nabla}_Y \tilde{T})(X)
    =\ & \pi^2_{15}(\tilde{X}^\flat \w \nabla_Y df+ g(Y,\nabla f) \tilde{X}^\flat \w df - g(X,\nabla f) \tilde{Y}^\flat \w df)\nonumber\\
    &+(\tilde{Y}^\flat \w df)\ \tilde{\diamond}\ \pi^2_{15} (\tilde{X}^\flat \w df) - 2g (Y,\nabla f) \pi^2_{15}(\tilde{X}^\flat \w df).
    \label{divergenceofconformal}
\end{align}
    Since $\mathrm{skew}(\nabla df) = d^2f=0$, we see that $\sum_{i=1}^8\pi^2_{15}(\tilde{E_i}^\flat \w \nabla_{E_i} df)=0$, where $E_i$ denotes a local orthonormal framing with respect to $g$. Working at a point and identifying $\Om$ with $\Om_0$ and $E_i$ with $\partial_{x_i}$, it suffices to check directly  that the last two terms in (\ref{divergenceofconformal}) vanish as well, when summing over the $E_i$. Since $\Sp(2)\Sp(1)$ acts transitively on the unit sphere, we can also identify $\nabla f$ with $c \partial_{x_1}$ at a point, to further ease computation. In those terms it is straightforward to check that $\mathrm{div}_{\tilde{g}}(\tilde{T})=0$.
\end{proof}
The argument in the above proof can also be applied to the $\rG_2$ and $\Spin(7)$ cases. This reduces the problem of computing the divergence of torsion for a conformally parallel structure to verifying that that the last two terms in (\ref{divergenceofconformal}) vanish when summing over $i$, which is essentially just a pointwise computation. The result in the $\rG_2$ case is already known, cf. \cite{Grigorian2019}*{Theorem 4.3}), and our argument extends easily to the $\Spin(7)$ case:
\begin{corollary}
    A locally conformally parallel $\Spin(7)$-structure on an $8$-manifold is harmonic.
\end{corollary}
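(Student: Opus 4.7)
The plan is to run the argument of Theorem \ref{conformalimpliesharmonictheorem} essentially verbatim, with the $4$-form $\Om$ replaced by the Cayley $4$-form $\Phi$ of $\Spin(7)$, and the projection $\pi^2_{15}: \Lm^2 \to \Lm^2_{15}$ replaced by the $\Spin(7)$-invariant projection $\pi^2_7: \Lm^2 \to \Lm^2_7$, where $\Lm^2 \cong \mathfrak{spin}(7)\oplus \Lm^2_7$. So, locally write $\tilde{\Phi}=e^{4f}\Phi$ with $\Phi$ torsion-free. Apply Proposition \ref{torsionofconformallyparallelproposition} with $H=\Spin(7)$, $k=4$, $\xi=\Phi$, which yields
\begin{align*}
(\tilde{\nabla}_Y \tilde{T})(X)\ =\ &\pi^2_7(\tilde{X}^\flat \w \nabla_Y df+ g(Y,\nabla f) \tilde{X}^\flat \w df - g(X,\nabla f) \tilde{Y}^\flat \w df)\\
&+(\tilde{Y}^\flat \w df)\ \tilde{\diamond}\ \pi^2_7 (\tilde{X}^\flat \w df) - 2g (Y,\nabla f) \pi^2_7(\tilde{X}^\flat \w df).
\end{align*}
We then want to show $\mathrm{div}_{\tilde{g}}(\tilde T)=\sum_i(\tilde\nabla_{E_i}\tilde T)(E_i)=0$ for a local $g$-orthonormal frame $\{E_i\}$.

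The first term traces out by symmetry of the Hessian: since $(\nabla df)_{ij}=(\nabla df)_{ji}$, one has $\sum_i E_i^\flat\w\nabla_{E_i}df=0$ identically, so its projection onto $\Lm^2_7$ is zero. The next two terms inside $\pi^2_7$ cancel pairwise under the trace $X=Y=E_i$, since $g(E_i,\nabla f)E_i^\flat\w df=g(E_i,\nabla f)E_i^\flat\w df$. What remains is the pointwise identity
\[
\sum_{i=1}^{8}\Big[(E_i^\flat\w df)\,\tilde\diamond\,\pi^2_7(E_i^\flat\w df)-2g(E_i,\nabla f)\pi^2_7(E_i^\flat\w df)\Big]=0.
\]

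This last step is a purely algebraic computation on $\R^8$. Because $\Spin(7)$ acts transitively on the unit sphere $S^7\subset\R^8$, we may, after rescaling, identify $\nabla f$ with $c\,\partial_{x_1}$ at the point of interest; the claimed vanishing then becomes a finite sum of explicit terms to be checked against the standard model $\Phi$ as in \eqref{eq: spin7form}. Using the standard description of $\Lm^2_7$ as the $-1$-eigenspace of the operator $\alpha\mapsto *(\alpha\w\Phi)$ (analogous to \eqref{Lm23}--\eqref{Lm215} for the QK case) yields explicit formulas for $\pi^2_7$ and $\tilde\diamond$ on simple $2$-forms $\partial_{x_1}^\flat\w df$, after which the sum collapses to zero. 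The main (though routine) obstacle is precisely this explicit verification; it mirrors the Sp(2)Sp(1) computation at the end of the proof of Theorem \ref{conformalimpliesharmonictheorem} and, since no QK-specific feature of $\diamond$ was used beyond the reductive splitting $\mathfrak{so}(8)=\mathfrak{h}\oplus\mathfrak{m}$, the same structural argument applies to $\Spin(7)$.
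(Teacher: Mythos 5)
Your proposal is correct and takes essentially the same approach as the paper: the paper's proof is literally the one-line instruction to repeat the proof of Theorem \ref{conformalimpliesharmonictheorem} with $\Om$ replaced by the Cayley form $\Phi$ and $\pi^2_{15}$ by $\pi^2_7$, which is precisely what you unpack. Like the paper, you reduce to a pointwise algebraic identity on $(\R^8,\Phi_0)$ after fixing $\nabla f = c\,\partial_{x_1}$ by transitivity of $\Spin(7)$ on $S^7$, and leave that routine verification implicit.
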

\begin{proof}
    Repeat the proof of Theorem \ref{conformalimpliesharmonictheorem} with $\Om$ replaced by the $\Spin(7)$-structure $4$-form $\Phi$, which is pointwise modelled on  (\ref{eq: spin7form}).
\end{proof}

\section{Quaternion-K\"ahler harmonic flow: basic properties}
\label{sec: harmonicQKflow}

In this section we derive the harmonic flow equation for   quaternion-K\"ahler (QK) structures and define the corresponding notion of soliton. 

We can express the harmonic flow (\ref{harmonicmapflowequation}) for $H=\Sp(2)\Sp(1)$ in terms of an evolution equation for the defining $4$-form $\Om(t)$.  First, in terms of the isomorphism \eqref{eq: isomorphism I:V->m} between the vertical component of $TN$ and the bundle $\ufm$, we have
\begin{align*}
	\mathcal{I}(\tau^{\mathcal{V}}(\sigma_t)) 
	&= \mathcal{I}(\tr (\nabla^\mathcal{V} d^\mathcal{V} \sigma_t)) 
	= \tr (\nabla (\sigma_t^*f))
	= \tr (\nabla T_t)\\
	&=   \mathrm{div}\ T_t
\end{align*}
where we used Proposition \ref{summaryofresultsLSE} and expression (\ref{eq: qktorsion}) for the intrinsic torsion.
Extending the connection form $f$ to $M^8 \times \R_t$, and performing the same computation as above, we get 
\begin{equation}
	\mathcal{I}\Big(\frac{d\sigma_t}{dt}\Big) =  f(d\sigma_t(\partial_t))= \frac{1}{32}  \frac{d\Om_t}{dt} \ip_3 \Om_t,
\end{equation}
where we inverted the diamond operator $\diamond$ in (\ref{definitionoff}) using the operator $\ip_3$.

In view of the results of the previous section, the harmonic flow of a $\Sp(2)\Sp(1)$-structure starting at $\Om_0$ becomes:
\begin{gather}
\left\{
\begin{array}{rcl}
    \displaystyle
    \frac{d \Om}{d t} &=& (\mathrm{div}\ T) \diamond \Om
    \label{harmonicQKflowequation}\\
    \Om(0) &=&\Om_0
\end{array}
\right..
\end{gather}
We shall also refer to the above flow as the \emph{QK harmonic flow}.
As an instance of the general theory of harmonic $H$-flows, we already know that the flow admits a unique short-time solution, given smooth initial data. Moreover, if the flow exists for a maximal time $T_{max}$, then  $\sup_{x\in M}|T_t| \to \infty$ as $t \to T_{max}$  \cite{Loubeau2019}*{Theorems 1 and 2}. In this section we will develop the technical results necessary to study the behaviour of the flow as $t\to T_{max}$, and investigate under what circumstances $T_{max}$ can be extended to infinity.

\subsection{Evolution of the  intrinsic torsion}

Let us derive the evolution of the torsion tensor $T$ under the harmonic flow (\ref{harmonicQKflowequation}).
\begin{proposition}
\label{evolutionoftorsionproposition}
    Under the harmonic flow (\ref{harmonicQKflowequation}) the intrinsic torsion $T$ evolves by
\begin{equation}
\label{evolutionoftorsionequation}
    \frac{\partial T}{\partial t}(X)= \nabla_X(\mathrm{div}\ T)-\frac{1}{32}\Big((\mathrm{div}\ T \diamond \Om)\ip_3 (T(X)\diamond \Om )-
    (T(X)\diamond \Om ) \ip_3
    (\mathrm{div}\ T \diamond \Om)\Big),
\end{equation}
for $X\in \Gamma(TM)$. Moreover, 
\begin{equation}
    \pi^{2}_{15}\Big(\frac{\partial T}{\partial t}(X) \Big) 
    = \pi^{2}_{15}\Big(\nabla_X(\mathrm{div}\ T)\Big).
\end{equation}
\end{proposition}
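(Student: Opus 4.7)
The plan is to differentiate the pointwise identity $T(X) = \frac{1}{32}(\nabla_X \Om) \ip_3 \Om$ from (\ref{eq: qktorsion}) with respect to $t$, substitute the flow equation, and reorganise the resulting terms using the algebraic identities from Section \ref{preliminariesonsp2sp1section}. Since the Levi-Civita connection $\nabla$ and the triple contraction $\ip_3$ depend only on the fixed metric $g$ (the flow evolves only $\Om$), the operator $\partial_t$ commutes with $\nabla_X$ and satisfies Leibniz on the bilinear pairing $\ip_3$, giving $\partial_t T(X) = \frac{1}{32}[\nabla_X(\partial_t \Om) \ip_3 \Om + (\nabla_X \Om) \ip_3 (\partial_t \Om)]$.

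Next I would substitute $\partial_t \Om = (\mathrm{div}\, T) \diamond \Om$, apply Leibniz for $\nabla_X$ on the $\diamond$-pairing to obtain $\nabla_X((\mathrm{div}\, T) \diamond \Om) = (\nabla_X \mathrm{div}\, T) \diamond \Om + (\mathrm{div}\, T) \diamond (\nabla_X \Om)$, and replace $\nabla_X \Om$ by $T(X) \diamond \Om$. Lemma \ref{triplecontractioninverse} then collapses $((\nabla_X \mathrm{div}\, T) \diamond \Om) \ip_3 \Om$ into $32\,\nabla_X(\mathrm{div}\, T)$, producing the leading term of (\ref{evolutionoftorsionequation}).

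The remaining ``cross'' contributions $((\mathrm{div}\, T) \diamond (T(X) \diamond \Om)) \ip_3 \Om$ and $(T(X) \diamond \Om) \ip_3 ((\mathrm{div}\, T) \diamond \Om)$ need to be repackaged as the antisymmetric pairing in the statement. The key tool is the $\SO(8)$-equivariance of $\ip_3$, which gives the derivation identity $A \cdot (\eta \ip_3 \zeta) = (A \diamond \eta) \ip_3 \zeta + \eta \ip_3 (A \diamond \zeta)$ for $A \in \Lm^2 \cong \mathfrak{so}(8)$. Applying this with $A = \mathrm{div}\, T$, $\eta = T(X) \diamond \Om$ and $\zeta = \Om$, and using the identity $(T(X) \diamond \Om) \ip_3 \Om = 32\, T(X)$ from Lemma \ref{triplecontractioninverse}, the two cross-terms combine into the antisymmetric $\ip_3$-pairing of $(\mathrm{div}\, T) \diamond \Om$ and $T(X) \diamond \Om$, as required. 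Tracking the signs and the factor of $1/32$ through this rearrangement is the most delicate step and closely mirrors the derivation of the Bianchi identity in the proof of Proposition \ref{bianchiidentityproposition}.

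For the projected formula, I would apply $\pi^{2}_{15}$ to (\ref{evolutionoftorsionequation}). The antisymmetric correction $(\mathrm{div}\,T \diamond \Om) \ip_3 (T(X) \diamond \Om) - (T(X) \diamond \Om) \ip_3 (\mathrm{div}\,T \diamond \Om)$ is a pairing of elements of $\Lm^{4+}_{15}$ under $\ip_3$, and by Lemma \ref{Lm215andLm215pairingLemma} lies in $\Lm^2_3 \oplus \Lm^2_{10}$. This is precisely the representation-theoretic observation used to pass from (\ref{eq: bianchi identity}) to (\ref{eq: bianchi identity2}) via the decomposition (\ref{decompositionofsquareofOm215}). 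Consequently $\pi^2_{15}$ annihilates the correction term and the second claim follows immediately.
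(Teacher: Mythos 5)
There is a genuine gap in the second paragraph. After distributing $\nabla_X$ over the $\diamond$-pairing to get $\nabla_X(\mathrm{div}\,T\diamond\Om)=(\nabla_X\mathrm{div}\,T)\diamond\Om+\mathrm{div}\,T\diamond(T(X)\diamond\Om)$, you claim that Lemma~\ref{triplecontractioninverse} collapses $((\nabla_X\mathrm{div}\,T)\diamond\Om)\ip_3\Om$ into $32\,\nabla_X(\mathrm{div}\,T)$. This is false: that lemma is stated only for $\kappa\in\Lm^2_{15}$, and $\nabla_X(\mathrm{div}\,T)$ is \emph{not} a section of $\Lm^2_{15}$, because the Levi-Civita connection does not preserve the $\Sp(2)\Sp(1)$-splitting of $\Lm^2$ (only the intrinsic $H$-connection $\nabla-T\diamond$ does). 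Since $\diamond$ kills $\Lm^2_3\oplus\Lm^2_{10}$ (Lemma~\ref{kernelofdiamond}), what you actually obtain is $32\,\pi^2_{15}(\nabla_X\mathrm{div}\,T)$, which is not yet the stated leading term. Moreover, if you then run your proposed $\SO(8)$-equivariance identity with $A=\mathrm{div}\,T$, $\eta=T(X)\diamond\Om$, $\zeta=\Om$, you find that the cross-term $(\mathrm{div}\,T\diamond(T(X)\diamond\Om))\ip_3\Om$ is in fact identically zero (it equals $[\mathrm{div}\,T,32T(X)]-(T(X)\diamond\Om)\ip_3(\mathrm{div}\,T\diamond\Om)$, and the two pieces cancel via Lemma~\ref{Lm215andLm215pairingLemma}/Remark~\ref{symmetricspaceRem}), so the ``two cross-terms'' do not combine into the antisymmetric $\ip_3$-pairing as you describe. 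Your route therefore lands on $\partial_t T(X)=\pi^2_{15}(\nabla_X\mathrm{div}\,T)+[\mathrm{div}\,T,T(X)]$, and to match the stated formula you would still need the nontrivial identity $\pi^2_{3\oplus10}(\nabla_X\mathrm{div}\,T)=[T(X),\mathrm{div}\,T]$, which comes from the fact that the $H$-connection preserves $\Lm^2_{15}$ and $[\fm,\fm]\subseteq\fh$ — a step absent from your argument.

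The paper sidesteps this entirely by \emph{not} distributing $\nabla_X$ over $\diamond$. It differentiates the pointwise identity $(\mathrm{div}\,T\diamond\Om)\ip_3\Om=32\,\mathrm{div}\,T$, which is valid because $\mathrm{div}\,T$ does take values in $\Lm^2_{15}$ at each time, applies Leibniz only for $\ip_3$ (metric-parallel, hence safe), and then solves for $\nabla_X(\mathrm{div}\,T\diamond\Om)\ip_3\Om$. This keeps $\nabla_X(\mathrm{div}\,T)$ intact and never requires Lemma~\ref{triplecontractioninverse} outside its domain. Your treatment of the second claim — that the correction term lies in $\Lm^2_3\oplus\Lm^2_{10}$ via the decomposition (\ref{decompositionofsquareofOm215}) and hence is annihilated by $\pi^2_{15}$ — is correct and is the same argument as the paper's.
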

\begin{proof}
    From (\ref{eq: qktorsion}), we have 
\begin{align}
    \frac{\partial T}{\partial t}(X) 
    &= \frac{1}{32} (\nabla_X\Big(\frac{\partial \Om}{\partial t}\Big)\ip_3 \Om+ (\nabla_X \Om)\ip_3 \Big(\frac{\partial \Om}{\partial t}\Big)),\nonumber\\
    &= \frac{1}{32} (\nabla_X\Big(\mathrm{div}\ T \diamond \Om\Big)\ip_3 \Om+ (\nabla_X \Om)\ip_3 \Big(\mathrm{div}\ T \diamond \Om\Big)),
\end{align}
    where for the first equality we used the fact that $g$ is unchanged along the flow and hence so are $\nabla$ and $\ip_3$, and for the second equality we use (\ref{harmonicQKflowequation}). The first part of the proposition now follows from 
\[32\nabla_X(\mathrm{div}\ T)
=\nabla_X((\mathrm{div}\ T \diamond \Om) \ip_3 \Om )= \nabla_X\Big(\mathrm{div}\ T \diamond \Om\Big)\ip_3 \Om+(\mathrm{div}\ T \diamond \Om)\ip_3 (T(X) \diamond \Om) ,\]
    where we again use the fact that $\ip_3$ only depends on $g$ and hence is invariant under $\nabla$.
For the second part we use the same argument as in the proof of Proposition \ref{bianchiidentityproposition} i.e.
\[(\mathrm{div}\ T \diamond \Om)\ip_3 (T(X)\diamond \Om ) \in \Lm^{4+}_{15} \otimes \Lm^{4+}_{15} \cong (\R \oplus S^2_0(\Lm^2_0 E) \oplus S^2 E)\otimes (\R \oplus S^4 H \oplus S^2 H) \]
and as such it has no component in $\Om^2_{15}$; likewise for $(T(X)\diamond \Om ) \ip_3
    (\mathrm{div}\ T \diamond \Om)$. This concludes the proof.
\end{proof}
The evolutions of Dirichlet energy and density now follow immediately:
\begin{corollary}
\label{evolutionoftorsionsquare}
    The norm square of $T$ evolves by
\begin{equation}
    \frac{\partial |T|^2}{\partial t} 
    := \frac{\partial}{\partial t}(g(T,T)) 
    = 2 g(\nabla \mathrm{div}T,T).
\end{equation}
    In particular, 
\begin{equation}
\label{evolutionoftorsionsquareequation}                    \frac{\partial}{\partial t}\int_M |T|^2 \ \vol 
    = -2 \int_M |\mathrm{div}T|^2\ \vol.
\end{equation}
\end{corollary}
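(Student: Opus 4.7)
The key observation is that the statement follows almost immediately from Proposition \ref{evolutionoftorsionproposition}, combined with integration by parts, exploiting that $T$ lies in $\Lambda^1 \otimes \Lambda^2_{15}$ and that the metric is preserved along the flow.

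For the first equation, the plan is to use the fact that $g$ is invariant under the harmonic flow (since the flow only modifies the $\Sp(2)\Sp(1)$-structure within its isometric class), so
\begin{equation*}
    \frac{\partial |T|^2}{\partial t} = 2 g\Big(\frac{\partial T}{\partial t}, T\Big).
\end{equation*}
Evaluating on an orthonormal frame $\{e_i\}$, we get $\frac{\partial |T|^2}{\partial t} = 2 \sum_i g\big(\frac{\partial T}{\partial t}(e_i), T(e_i)\big)$. Now, since $T(X) \in \Lambda^2_{15}$ for every $X$, only the $\pi^2_{15}$-component of $\frac{\partial T}{\partial t}(X)$ contributes to the pairing against $T(X)$. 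By the second assertion of Proposition \ref{evolutionoftorsionproposition}, this component equals $\pi^2_{15}(\nabla_X \mathrm{div}\, T)$, so
\begin{equation*}
    g\Big(\tfrac{\partial T}{\partial t}(X), T(X)\Big) = g\big(\pi^2_{15}(\nabla_X \mathrm{div}\, T), T(X)\big) = g\big(\nabla_X \mathrm{div}\, T, T(X)\big),
\end{equation*}
where in the last equality we once again used $T(X)\in \Lambda^2_{15}$ to drop the projection. Summing over $i$ yields the first formula.

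For the integrated statement, the approach is standard integration by parts on the closed manifold $M$. Define the vector field $X$ dual to $g(\mathrm{div}\, T, T(\cdot))$; computing $\mathrm{div}\, X$ at a point in a geodesic frame gives
\begin{equation*}
    \mathrm{div}\, X = \sum_i g\big(\nabla_{e_i}(\mathrm{div}\, T), T(e_i)\big) + g(\mathrm{div}\, T, \mathrm{div}\, T).
\end{equation*}
Integrating over $M$ and applying Stokes's theorem, the left-hand side vanishes, whence
\begin{equation*}
    \int_M g(\nabla \mathrm{div}\, T, T)\, \vol = -\int_M |\mathrm{div}\, T|^2\, \vol.
\end{equation*}
Combining this with the pointwise formula, and using that $\vol$ is also preserved along the flow, gives \eqref{evolutionoftorsionsquareequation}.

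There is no significant obstacle: the only subtle point is the projection argument in step three, which would fail without the second part of Proposition \ref{evolutionoftorsionproposition}; this in turn relied on the representation-theoretic fact that $\Lambda^{4+}_{15} \otimes \Lambda^{4+}_{15}$ contains no copy of $\Lambda^2_{15}$, so that the quadratic-in-$T$ corrections to the evolution equation are orthogonal to $T$ itself. This is precisely the phenomenon that makes the harmonic flow genuinely the $L^2$-gradient flow of the Dirichlet energy $E(\Omega)$, with $-\mathrm{div}\, T$ as its gradient.
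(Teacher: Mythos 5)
Your proof is correct and follows the same route the paper intends (the paper simply asserts the corollary follows ``immediately'' from Proposition \ref{evolutionoftorsionproposition} without spelling out the steps). You correctly identify the key point: when pairing $\partial_t T$ against $T$, the quadratic torsion correction in \eqref{evolutionoftorsionequation} drops out because it lies in $\Lambda^2_3 \oplus \Lambda^2_{10}$, orthogonal to $T(X) \in \Lambda^2_{15}$; this is exactly the content of the second assertion of Proposition \ref{evolutionoftorsionproposition}. The integration by parts and the remark that $\vol$ is time-independent are also handled correctly.
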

Note that (\ref{evolutionoftorsionsquareequation}) was to be expected, since the harmonic flow is just the negative gradient flow of the energy functional. Furthermore, however much the $L^2$-norm of $T$ decreases under the flow, it can still concentrate over certain points on $M$, thereby resulting in singularities. In order to analyse such behaviour, we need a monotonicity formula, which will derive in \S\ref{almostmonotonicitysubsection}. Next we show that the harmonic flow admits a parabolic rescaling.

\subsection{Parabolic rescaling}

In the study of geometric flows one often encounters finite-time singularities. These singularities are in many cases modelled on soliton solutions to the flow, and hence classifying those becomes an important problem. To find these solitons as one approaches a singularity, one performs a parabolic scaling i.e. a rescaling of geodesic distance by $x \to c x$ while time scales by $t \to c^2 t$, for some constant $c$. Provided that we have a compactness theorem, this allows one to take a suitable limit of the flow and thus to extract information about the singularity; this procedure is well-known for eg. for the Ricci and mean curvature flows. 
To perform the analogous scaling in our context, we first consider the behaviour of the intrinsic torsion under a homothetic rescaling, in accordance with the homogeneity degree of the QK $4$-form.

\begin{lemma}
\label{lem: paraboliclemma}
	Under the homothetic transformation $\tilde{\Om}:=c^4 \Om$, the torsion form transforms as $\tilde{T}=c^2 T$ and hence $\mathrm{div}_{\tilde{g}}(\tilde{T})= \mathrm{div}_g(T).$
\end{lemma}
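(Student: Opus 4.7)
The plan is to track how each ingredient in the definition \eqref{eq: qktorsion} of the torsion rescales under $\Om\mapsto\tilde\Om=c^4\Om$, and then to verify that the combined scaling factor is precisely $c^2$.

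First I would establish that the induced metric satisfies $\tilde g=c^2 g$. This is a homogeneity count in \eqref{eq: qkmetricfromOm}: the numerator under the cube root is of degree $21$ in $\Om$ (it is the determinant of a $7\times 7$ matrix whose entries, being $7$-forms evaluated on $7$ vectors, are cubic in $\Om$), while the denominator is of degree $6$ in $\Om$, so the right-hand side of \eqref{eq: qkmetricfromOm} is homogeneous of degree $\tfrac{21}{3}-6=1$ in $\Om$. Hence $\tilde g(X,X)^2=c^4 g(X,X)^2$, giving $\tilde g=c^2 g$. Since $\tilde g$ and $g$ differ by a constant multiple, they share the same Levi-Civita connection, $\tilde\nabla=\nabla$, so $\tilde\nabla_X\tilde\Om=c^4\,\nabla_X\Om$.

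Next I would compute how $\iota_3$ rescales. On $1$-forms, the musical isomorphism obeys $\tilde\sharp=c^{-2}\sharp$ (as $\tilde g^{-1}=c^{-2}g^{-1}$), so each term of
\[
\alpha_1\w\alpha_2\w\alpha_3\w\alpha_4\ \tilde\ip_3\ \tilde\Om \ =\ \alpha_1\w\bigl(\alpha_2^{\tilde\sharp}\ip\alpha_3^{\tilde\sharp}\ip\alpha_4^{\tilde\sharp}\ip\tilde\Om\bigr)-\cdots
\]
picks up a factor $c^{-6}\cdot c^4=c^{-2}$, i.e. $\tilde\iota_3=c^{-2}\iota_3$. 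Substituting into the definition of $\tilde T$ gives
\[
\tilde T(X)=\tfrac{1}{32}\,\tilde\iota_3\bigl(\tilde\nabla_X\tilde\Om\bigr)=\tfrac{1}{32}\,c^{-2}\,\iota_3\bigl(c^4\,\nabla_X\Om\bigr)=c^2\,T(X),
\]
as desired. (The torsion still takes values in the same subbundle $\Lm^2_{15}$, since the characterisation \eqref{Lm215} involves the combination $*(\,\cdot\,\w\Om)$, which is invariant under $(g,\Om)\mapsto(c^2g,c^4\Om)$.)

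Finally, for the divergence identity I would work in a local $g$-orthonormal frame $\{e_i\}$, so that $\{\tilde e_i:=c^{-1}e_i\}$ is $\tilde g$-orthonormal. Using $\tilde\nabla=\nabla$, the linearity of $T$ in its argument, and $\tilde T=c^2 T$, each summand of
\[
\mathrm{div}_{\tilde g}\tilde T=\sum_i \tilde\nabla_{\tilde e_i}\bigl(\tilde T(\tilde e_i)\bigr)-\tilde T\bigl(\tilde\nabla_{\tilde e_i}\tilde e_i\bigr)
\]
picks up an overall factor $c^{-1}\cdot c^2\cdot c^{-1}=1$, yielding $\mathrm{div}_{\tilde g}(\tilde T)=\mathrm{div}_g(T)$. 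The only slightly subtle step is the scaling of $\iota_3$, which couples $g$ (through the musical isomorphism) with $\Om$ itself; everything else reduces to routine homogeneity bookkeeping.
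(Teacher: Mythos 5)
Your proof is correct and follows essentially the same approach as the paper: both hinge on the observation that $\tilde g = c^2 g$ (so $\tilde\nabla = \nabla$) and that the operator encoding the torsion scales in a way that makes the overall factor come out to $c^2$. The paper phrases the scaling in terms of the forward map $\tilde\diamond$ (noting $T(X)\,\tilde\diamond\,\tilde\Om = c^2\,T(X)\diamond\Om$ since $\diamond$ carries a single $\sharp$), whereas you track the scaling of the inverse map $\iota_3$ (three $\sharp$'s and one factor of $\Om$, giving $\tilde\iota_3 = c^{-2}\iota_3$); the two are equivalent. You do spell out two things the paper takes for granted: the degree count $\tfrac{21}{3}-6=1$ in \eqref{eq: qkmetricfromOm} that forces $\tilde g = c^2 g$, and the explicit verification that the divergence is scale-invariant in a $\tilde g$-orthonormal frame. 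Both are sound and make the argument more self-contained.
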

\begin{proof}
	Observe that the homothetically rescaled metric is given by $\tilde{g}_{\tilde{\Om}}=c^2g_\Om$, while the Levi-Civita connection remains unchanged i.e.  $\tilde{\nabla}=\nabla$. Thus, we compute
	\begin{align*}
		\tilde{T}(X) \ \tilde{\diamond}\ \tilde{\Om} &=  c^4\ \nabla_X \Om 
		=c^4\ T(X) \diamond \Om \\
		&= c^2\ T(X)\ \tilde{\diamond}\ \tilde{\Om},
	\end{align*}
    where $\tilde{\diamond}$ denotes the associated operator to $\tilde{g}_{\tilde{\Om}}$. It is worth emphasising that by definition the operator $\diamond$ acting on $2$-forms depends on the metric $g$.
\end{proof}

\begin{corollary}
\label{corollaryparabolic}
	If $\Om_t$ is a solution to (\ref{harmonicQKflowequation}) defined for $t \in [0,T_{max})$ then under the parabolic rescaling $(\Om_t,t)\to (\tilde{\Om}_{\tilde{t}}:=c^4\Om, \tilde{t}:=c^2 t)$, $\tilde{\Om}_{\tilde{t}}$ is again a solution to (\ref{harmonicQKflowequation}) but now defined for $\tilde{t}\in[0,c^2T_{max})$.
\end{corollary}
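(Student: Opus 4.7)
The plan is to directly verify, via the chain rule, that the rescaled family $\tilde{\Om}_{\tilde t} := c^4\Om_t$ with $\tilde t := c^2 t$ satisfies the QK harmonic flow equation with respect to the rescaled metric $\tilde g_{\tilde \Om} = c^2 g_\Om$. This is really just a homogeneity computation, and there is no serious obstacle; the entire content sits in Lemma~\ref{lem: paraboliclemma}.

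First, I would compute the left-hand side using the chain rule: $\tfrac{d\tilde\Om}{d\tilde t} = c^4\cdot \tfrac{dt}{d\tilde t}\cdot \tfrac{d\Om}{dt} = c^2\,\tfrac{d\Om}{dt}$, and then invoke the flow equation \eqref{harmonicQKflowequation} for $\Om_t$ to get $\tfrac{d\tilde\Om}{d\tilde t} = c^2\,(\mathrm{div}_g T)\diamond\Om$. This reduces the problem to recognising the right-hand side as $(\mathrm{div}_{\tilde g}\tilde T)\,\tilde\diamond\,\tilde\Om$.

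For this, Lemma~\ref{lem: paraboliclemma} provides the two ingredients I need. On one hand, it gives the divergence identity $\mathrm{div}_{\tilde g}\tilde T = \mathrm{div}_g T$ directly. On the other hand, unpacking the calculation in its proof, namely $\tilde T(X)\,\tilde\diamond\,\tilde\Om = c^4\,T(X)\diamond\Om$ together with $\tilde T = c^2 T$, yields the homogeneity of the diamond operation itself, $\alpha\,\tilde\diamond\,\tilde\Om = c^2\,\alpha\diamond\Om$ for any $2$-form $\alpha$. Applying this with $\alpha = \mathrm{div}_g T$ converts the right-hand side: $c^2\,(\mathrm{div}_g T)\diamond\Om = (\mathrm{div}_g T)\,\tilde\diamond\,\tilde\Om = (\mathrm{div}_{\tilde g}\tilde T)\,\tilde\diamond\,\tilde\Om$, which is precisely the QK harmonic flow equation for $\tilde\Om_{\tilde t}$.

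It only remains to check the time interval and initial data. Since $\tilde t = c^2 t$ is a strictly increasing reparametrisation, $t\in[0,T_{max})$ corresponds bijectively to $\tilde t\in[0,c^2 T_{max})$; and at $\tilde t = 0$ we have $\tilde\Om_0 = c^4\Om_0$, which is just the initial datum of the rescaled flow. This completes the verification.
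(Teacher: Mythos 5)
Your proof is correct and follows essentially the same route as the paper: differentiate $\tilde\Om$ by the chain rule, substitute the flow equation, and then invoke Lemma~\ref{lem: paraboliclemma} to rewrite the right-hand side. The only difference is that you make explicit the homogeneity $\alpha\,\tilde\diamond\,\tilde\Om = c^2\,\alpha\diamond\Om$ (which, strictly speaking, is a general algebraic fact about rescaling $g$ and $\Om$, not a literal consequence of the lemma statement for $\alpha=\mathrm{div}_g T$ alone, though it does follow from the computation in the lemma's proof), whereas the paper leaves this implicit.
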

\begin{proof}
	We compute directly $\frac{d \tilde{\Om}_{\tilde{t}}}{d \tilde{t}} 
	=  c^2 \mathrm{div}_g(T) \diamond \Om 
	= \mathrm{div}_{\tilde{g}}(\tilde{T})\ \tilde{\diamond}\ \tilde{\Om}$,
    using (\ref{harmonicQKflowequation}) for the first equality and Lemma \ref{lem: paraboliclemma} for the second one.
\end{proof}
Next we introduce the notion of solitons for the harmonic QK flow (\ref{harmonicQKflowequation}).

\subsection{Harmonic \texorpdfstring{$\Sp(2)\Sp(1)$}{} solitons}

The simplest solutions to a geometric flow are those that evolve by scaling symmetry of the flow equation; these are called solitons and arise naturally when analysing singularities of the flow (see Theorem \ref{type1singularitymodel} below). We now describe what harmonic solitons look like in our context. 

\begin{definition}
\label{definitionSelfsimilarsolution} 
    A solution $\{\Om(t)\}$ of the harmonic QK flow \eqref{harmonicQKflowequation} is said to be  \emph{self-similar} if there exist a function $\rho(t)$, with $\rho(0)=1$, and a family of diffeomorphisms $\{f(t):M\rightarrow M\}$, with $f(0)=\mathrm{Id}$, such that
\begin{equation}
\Om(t) = \rho(t)^{4}{f(t)}^*\Om_0, \quad\forall\  t\in \, [0,T_{max}). \label{QKselfsimilarsolution}
\end{equation} 
\end{definition}
We shall now justify the name \textit{self-similar solution}.
Denoting by $W(t)\subset\sX(M)$ the infinitesimal generator of $f(t)\subset\Diff{(M)}$, the \emph{stationary vector field} of a self-similar solution is defined by
\begin{equation}
\label{stationaryvectorfieldQK}
    X(t):=(f(t)^{-1})_*{W(t)}\in \sX(M),
    \quad\forall\  t\in \, [0,T_{max}).
\end{equation}
From (\ref{QKselfsimilarsolution}) we immediately deduce that the metric evolves by
\begin{equation}
    g(t)=\rho(t)^2 f(t)^*g_0.
\end{equation}
On the other hand, since the harmonic flow is isometric, i.e. its time-derivative $g'(t)$ vanishes,
\begin{equation}
\label{solitonmetric}    \mathcal{L}_{X(t)}g_0
    =-2(\log{\rho(t)})'g_0.
\end{equation}
In particular, this shows that $\rho(t)$ completely determines $f(t)$ (up to isometry). 
Furthermore, specialising  \cite{Dwivedi2021}*{Lemma 2.9} to the case $H=\Sp(2)\Sp(1)$, we know that the torsion tensor $T(t)$ of $\Om(t)$ satisfies
\begin{equation*}
	\mathrm{div}\ T(t) = X(t) \ip T(t) + \frac{1}{2}\ \pi^2_{15} (dX(t)^\flat).
\end{equation*}
The above can be also shown quite easily using (\ref{harmonicQKflowequation}) and (\ref{QKselfsimilarsolution}). We should emphasise that the projection map $\pi^2_{15}:\Lm^2 \to \Lm^2_{15}$ is also time-dependent, since it is determined by $\Om(t)$.
The above motivates the following definition. 
\begin{definition}\label{definitionofsoliton}
A harmonic $\Sp(2)\Sp(1)$-soliton on a Riemannian manifold $(M^8,g)$ is given by a triple $(\Om, X, c)$, where $\Om$ induces the metric $g$, $X$ is a vector field and $c$ is a constant such that
 \begin{gather}
\left\{
\begin{array}{rcl}
    \displaystyle
    \mathcal{L}_X g &=& c g,\label{QKsolitonequation}\\
    \mathrm{div}\ T &=& X \ip T + \frac{1}{2}\ \pi^2_{15} (dX^\flat).
\end{array}
\right.
\end{gather}
    According to whether  $c<0$, $c=0$ or $c>0$,  the corresponding soliton is said to be  \emph{shrinking}, \emph{steady} or \emph{expanding}, respectively.
\end{definition}
We can now show that solitons indeed give rise to self-solution solutions of (\ref{harmonicQKflowequation}).
\begin{proposition}
    A harmonic $\Sp(2)\Sp(1)$-soliton, as in (\ref{QKsolitonequation}), induces a self-similar solution.
\end{proposition}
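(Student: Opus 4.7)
The plan is to construct the self-similar solution explicitly from the soliton triple $(\Om,X,c)$. Let $\{\phi_{s}\}$ denote the flow of $X$ (which exists at least locally, and globally if $M$ is compact), define $\rho(t):=(1-ct)^{1/2}$ and $\tau(t):=-c^{-1}\log(1-ct)$ for $c\neq 0$ (extended continuously to $\rho\equiv 1$ and $\tau(t)=t$ when $c=0$), and set $f(t):=\phi_{\tau(t)}$ together with
\begin{equation*}
    \Om(t):=\rho(t)^{4}\,f(t)^{*}\Om.
\end{equation*}
Then $\Om(0)=\Om$, and the condition $\cL_{X}g=cg$ gives $\phi_{s}^{*}g=e^{cs}g$, whence $f(t)^{*}g=e^{c\tau(t)}g=\rho(t)^{-2}g$, so the induced metric $g(t)=\rho(t)^{2}f(t)^{*}g=g$ is preserved, consistent with the isometric character of the harmonic flow.

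The key algebraic identity extracted from the soliton equations is
\begin{equation*}
    \cL_{X}\Om \;=\; (\mathrm{div}\,T)\diamond\Om + 2c\,\Om.\quad(\star)
\end{equation*}
I would derive $(\star)$ from the standard relation $\cL_{X}\Om=\nabla_{X}\Om+(\nabla X^{\flat})\diamond\Om$, where the diamond acts on the $(0,2)$-tensor $\nabla X^{\flat}$ via Remark~\ref{defintionofdiamond}. Decomposing $\nabla X^{\flat}$ into symmetric and skew parts, the symmetric part $\mathrm{sym}(\nabla X^{\flat})=\tfrac{1}{2}\cL_{X}g=\tfrac{c}{2}g$ contributes $\tfrac{c}{2}g\diamond\Om=2c\,\Om$ via the identity $g\diamond\Om=\sum_{i}dx^{i}\w(\pt_{x_{i}}\ip\Om)=4\Om$, while the skew part $\mathrm{skew}(\nabla X^{\flat})=\tfrac{1}{2}dX^{\flat}$ contributes $\tfrac{1}{2}\pi^{2}_{15}(dX^{\flat})\diamond\Om$ by Lemma~\ref{kernelofdiamond}. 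Combining with $\nabla_{X}\Om=T(X)\diamond\Om=(X\ip T)\diamond\Om$ and invoking the second soliton equation produces $(\star)$.

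It remains to verify $\frac{d\Om(t)}{dt}=(\mathrm{div}\,T(t))\diamond\Om(t)$ for every $t$. Differentiating the ansatz and substituting $(\star)$ gives
\begin{equation*}
    \frac{d\Om(t)}{dt}=(4\rho^{3}\rho'+2c\tau'\rho^{4})f(t)^{*}\Om \;+\; \tau'\rho^{4}\,f(t)^{*}\bigl((\mathrm{div}\,T)\diamond\Om\bigr),
\end{equation*}
and the first coefficient vanishes by the identities $2\rho\rho'=-c$ and $\tau'=\rho^{-2}$. For the right-hand side, Lemma~\ref{lem: paraboliclemma} applied to the homothety $\rho^{4}$ together with naturality of pullback under $f(t)$ yields $T(t)=\rho^{2}f(t)^{*}T$, and one checks that the metric-rescaling factors cancel to give $\mathrm{div}_{g}T(t)=f(t)^{*}(\mathrm{div}_{g}T)$. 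Finally, since $f(t)^{*}g=\rho^{-2}g$ entails $\sharp_{f(t)^{*}g}=\rho^{2}\sharp_{g}$ and hence $\diamond_{f(t)^{*}g}=\rho^{2}\diamond_{g}$, naturality of wedge and contraction gives
\begin{equation*}
    f(t)^{*}\bigl((\mathrm{div}\,T)\diamond_{g}\Om\bigr)=f(t)^{*}(\mathrm{div}\,T)\diamond_{f(t)^{*}g}f(t)^{*}\Om=\rho^{2}\,f(t)^{*}(\mathrm{div}\,T)\diamond_{g}f(t)^{*}\Om,
\end{equation*}
and substituting $\tau'\rho^{2}=1$ matches both sides of the flow equation identically in $t$.

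The main obstacle is the meticulous bookkeeping of $\rho$-factors through the combined transformation by the diffeomorphism $f(t)$ and the homothety $\rho^{4}$: the diamond operator, sharp operator, torsion tensor, and divergence each rescale differently, and only the precise choice $\rho(t)^{2}=1-ct$ (equivalently $\tau'\rho^{2}=1$) produces the required cancellation. This compatibility is exactly what forces the parabolic time reparametrization $\tau(t)$ in the non-steady case $c\neq 0$.
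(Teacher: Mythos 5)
Your proof is correct and follows essentially the same scheme as the paper's: construct $\Om(t)=\rho(t)^{4}f(t)^{*}\Om$ from the flow of $X$, derive the key identity $\cL_{X}\Om=(\mathrm{div}\,T)\diamond\Om+2c\,\Om$, and verify the flow equation by keeping track of the scaling of $\diamond$, $T$ and $\mathrm{div}$ under the combined diffeomorphism/homothety. Where the paper cites \cite{Dwivedi2021}*{Lemma 2.6} for the identity, you derive it from $\cL_{X}\Om=\nabla_{X}\Om+(\nabla X^{\flat})\diamond\Om$, the decomposition of $\nabla X^{\flat}$ into its symmetric part $\tfrac12\cL_{X}g$ and skew part $\tfrac12 dX^{\flat}$, and Lemma~\ref{kernelofdiamond}; this is a self-contained rederivation of the same thing, not a different route.

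What deserves emphasis is the parametrisation. The paper takes $\rho(t)=(t+1)^{-c/2}$ and $X(t)=(t+1)^{-1}X$; you take $\rho(t)=(1-ct)^{1/2}$ with $\tau(t)=-c^{-1}\log(1-ct)$, equivalently $X(t)=(1-ct)^{-1}X$. Both satisfy the metric-compatibility constraint $(\log\rho)'=-\tfrac{c}{2}\tau'$ (i.e.\ equation~(\ref{solitonmetric})), but that alone does not make $\Om(t)$ a solution: as your computation makes explicit, one also needs $\tau'\rho^{2}=1$ so that the $\Lm^{4+}_{15}$-component of $\frac{d\Om}{dt}$ matches $(\mathrm{div}\,T(t))\diamond\Om(t)$, and the two constraints together with $\rho(0)=1$, $\tau(0)=0$ force $\rho^{2}=1-ct$, $\tau=-c^{-1}\log(1-ct)$. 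With the paper's choice one has $\tau'\rho^{2}=(t+1)^{-c-1}$, which equals $1$ only when $c=-1$; for general $c$ that parametrisation produces a time-reparametrised version of the flow rather than the flow itself (consistent with the paper's remark that the resulting solutions live on $(-1,\infty)$, which is the $c=-1$ interval). Your $\rho$-bookkeeping is therefore not mere pedantry — it identifies the unique admissible parametrisation and corrects a small computational slip in the published argument. Apart from this, the two arguments are the same.
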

\begin{proof}
    We first consider the case $c\neq 0$. Let $\rho(t)=(t+1)^{-c/2}$ and $X(t)=(t+1)^{-1}{X}$, so that $\rho(0)=1$ and $X(0)={X}$. It is easy to see that this satisfies (\ref{solitonmetric}). We then define $f(t$ by
\[\frac{d}{dt}f(t)=X(t) f(t),
    \qwithq f(0)=\mathrm{Id}.
\]
    Applying \cite{Dwivedi2021}*{Lemma 2.6},  to the $H=\Sp(2)\Sp(1)$ case, we have
\[\mathcal{L}_{X(t)}\Om
=(X(t) \ip T +\frac{1}{2}\mathcal{L}_{X(t)}g +\frac{1}{2}\pi^2_{15}(dX(t)^\flat))\diamond \Om\]
    and one easily checks from the definition of $\diamond$, see Remark \ref{defintionofdiamond},
    that $g \diamond \Om = 4 \Om$. 
    Defining $\Om(t)$ by (\ref{QKselfsimilarsolution}), and using the above together with (\ref{QKsolitonequation}), one verifies directly that this indeed defines a solution to (\ref{harmonicQKflowequation}). Note that the resulting expanding and shrinking solitions are defined for $t\in (-1,\infty)$.
    If $c=0$, then we can take $\rho(t)=1$ and $X(t)=X$, obtaining an eternal steady soliton solution.
\end{proof}

Note that a soliton does not determine a unique self-similar solution, in fact for any function $h$, depending only on $t$, such that $h(0)=0$ and $h'(0)=-\frac{c}{2}$, we can set
$$
    \rho = \exp(h(t))
    \qandq 
    X(t) = -\frac{2}{c} h'(t)X.
$$
For instance, setting $h(t)=-\frac{c}{2}t$ we get eternal skrinkers and expanders as well. 

It is a classical result that the only complete Riemannian manifold with a non-Killing homothetic vector field is Euclidean space, cf. \cite{Tashiro1965}. We immediately deduce that:
\begin{corollary}\label{cor: shrinkersandexpandeers}
    Shrinking and expanding solitons of the QK harmonic flow (\ref{harmonicmapflowequation}) are always isometric to Euclidean $\R^8$.
\end{corollary}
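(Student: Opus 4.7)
The plan is to recognise that the first equation of the soliton system \eqref{QKsolitonequation}, namely $\mathcal{L}_X g = c g$, is precisely the definition of $X$ being a \emph{homothetic} vector field on $(M^8,g)$, and that when $c\neq 0$ such a field fails to be Killing. The hypothesis of completeness of $(M,g)$ is implicit, as the relevant setting for a geometric flow on a soliton background.

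First, I would observe that for a shrinking soliton $c<0$ and for an expanding soliton $c>0$, so in both cases $c\neq 0$. Hence the Lie derivative $\mathcal{L}_X g$ is a non-zero constant multiple of $g$, which in particular forbids $X$ from being Killing (since a Killing field would give $\mathcal{L}_X g = 0$). Therefore, in either regime, the underlying Riemannian manifold $(M^8,g)$ supports a non-Killing homothetic vector field.

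Next, I would invoke the classical theorem of Tashiro \cite{Tashiro1965}, already cited in the paragraph preceding the corollary, which states that the only complete, connected Riemannian manifold admitting a non-Killing homothetic vector field is Euclidean space. Applied to $(M^8,g)$, this yields that $(M^8,g)$ is isometric to $(\R^8,g_{\mathrm{flat}})$, completing the argument.

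The only step requiring any delicacy is the appeal to completeness, which is standard in the soliton literature and is implicitly part of Definition \ref{definitionofsoliton}; no further analytic input is needed, and the torsion equation in \eqref{QKsolitonequation} plays no role here. In summary, the corollary is a one-line consequence of Tashiro's theorem, and the proof reduces to the identification of $X$ as a non-trivial homothetic field whenever $c\neq 0$.
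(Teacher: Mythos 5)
Your argument is correct and is exactly the one the paper uses: the sentence immediately preceding Corollary \ref{cor: shrinkersandexpandeers} invokes Tashiro's theorem on complete manifolds with non-Killing homothetic vector fields, and the corollary is stated as an immediate consequence. Your only addition—spelling out that $c\neq 0$ forces $X$ to be non-Killing—is a correct and helpful elaboration of the same one-line proof.
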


Note that there are plenty of non-parallel structures $\Om$ on $\R^8$ inducing the Euclidean metric, so 
it natural to ask whether there exists any non-torsion-free shrinking or expanding soliton.  We shall answer in the affirmative with an explicit example of a steady soliton in \S\ref{sec: steadysolitonexample}, by means of the following simple idea. If $X=\nabla f$ is some gradient vector field, then $d X^{\flat}=0$ and hence  \textit{gradient harmonic solitons} satisfy 
\begin{equation}
\label{gradientsoliton}
    \mathrm{div}\ T = T(\nabla f).
\end{equation}

\section{Quaternion-K\"ahler harmonic flow: long-time existence and singularities}
\label{sec: long-time and sing}

By exploiting the similarities with the harmonic flow of $\Spin(7)$-structures, we readily obtain a compactness theorem for the harmonic quaternion-K\"ahler flow. We also prove an almost-monotonicity formula, by building upon the recent work in \cite{HeLi2021} in the context of almost-Hermitian structures. Our monotonicity formula also applies to the $\Spin(7)$ case and hence leads to a stronger convergence result than in \cite{Dwivedi2021}. In fact our proof of the monotonicity formula extends to a much more general class of $H$-structures, as established independently in  \cite{Fadel2022}.
In the last part we describe the singular set of the flow.

\subsection{Compactness}
If a solution to the harmonic flow  (\ref{harmonicQKflowequation}) has a finite-time singularity, then we obtain a new sequence of solutions by performing parabolic rescalings. In order to analyse the singularity, we need to be able to take a limit of such a sequence, following the standard method used for instance for the Ricci flow and mean curvature flow. We begin by specifying the notion of limit in our context:
\begin{definition}  
    Let $(M_i^8,\Om_i, g_i)$ be a sequence of complete Riemannian manifolds, with $\Sp(2)\Sp(1)$-structures determined by $\Om_i$ and marked points $p_i\in M_i^8$. Then we call $(M^8,\Om,p)$ a limit of the sequence, and write
$$
    (M_i^8,\Om_i,p_i) \to (M^8,\Om,p),
$$
    if there exists a sequence of compact sets $ \{U_i\}$ exhausting $M^8$ with $p_i\in \mathrm{int}(U_i)$, and a sequence of diffeomorphisms $\{F_i:U_i \to F(U_i)\subset M_i^8\}$ with $F_i(p)=p_i$, such that, on every compact set $K \subset M^8$ and for each $\varepsilon>0$, there exixts $i_0$ (depending on $\varepsilon$) such that 
\[\sup_{x\in K}|\nabla^k(F^*_i\Om_i-\Om)|_{g_0}<\varepsilon,
\quad\forall i\geq i_0,\]
    where $g_0$ denotes a fixed reference metric on $M^8$ and $\nabla$ is its corresponding Levi-Civita connection.
\end{definition}

We can now state the compactness theorem for the QK harmonic flow.
\begin{theorem}
\label{thm: compactness}
    Let $M_i$ be a sequence of compact $8$-manifolds with marked points $p_i\in M_i$, and let $\{\Om_i(t)\}$ denote a sequence of solutions to the QK harmonic flow (\ref{harmonicQKflowequation}) on $M_i$
    defined for $t\in (a,b)$. Suppose the following assumptions hold:
\begin{gather}
    \sup_i\sup_{M_i \times  (a,b)} |T_i(x,t)|_{g_i} < \infty,
    \label{compactnesshypothesisone}\\
    \inf_i \mathrm{inj}(M_i^8,g_i(0),p_i)>0,
    \label{compactnesshypothesistwo}\\
    \sup_i|\nabla^kR_i|\leq C_k,
    \label{compactnesshypothesisthree}
\end{gather}
where $\mathrm{inj}$ denotes the injectivity radius and $C_k$ are uniform constants independent of $i$. Then there exist a manifold $M^8$, with a marked point $p\in M$, and a solution $\Om(t)$ to (\ref{harmonicQKflowequation}) on $M$, defined for $t\in (a,b)$, arising as the  subsequential limit 
\[
    (M_i,\Om_i(t),p_i) \to (M,\Om(t),p),
    \quad\text{as}\quad 
    i \to \infty.
\]
\end{theorem}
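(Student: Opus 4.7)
The plan is to follow the standard Cheeger--Gromov/Hamilton scheme, adapted to the harmonic $\Sp(2)\Sp(1)$-flow, in close analogy with the $\Spin(7)$ case in \cite{Dwivedi2021}. The strategy has two independent stages: first extract a Riemannian limit, then promote it to a limit of $\Sp(2)\Sp(1)$-structures compatible with the flow.

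\textbf{Stage 1: Cheeger--Gromov limit of the underlying metrics.} A crucial observation is that the harmonic QK flow is isometric, since $g=g_\Om$ depends only on $\Om$ up to the $\Sp(2)\Sp(1)$-action which preserves it; equivalently, along \eqref{harmonicQKflowequation} the $4$-form evolves by $\mathrm{div}\,T \diamond \Om \in \Lm^{4+}_{15}$, which lies in the complement of the symmetric tensor representations identifying metric variations (cf.\ Remark \ref{defintionofdiamond}). Hence $g_i(t)=g_i(0)$ for all $t\in (a,b)$, and the curvature bound \eqref{compactnesshypothesisthree} together with the injectivity radius bound \eqref{compactnesshypothesistwo} yields, by Cheeger--Gromov--Hamilton compactness, a subsequential limit $(M,g,p)$ and embeddings $F_i\co U_i\hookrightarrow M_i$ with $F_i(p)=p_i$, the $U_i$ exhausting $M$, such that $F_i^* g_i \to g$ in $C^\infty_{\mathrm{loc}}$.

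\textbf{Stage 2: Uniform $C^{k}$ bounds on $\tilde\Om_i(t):=F_i^*\Om_i(t)$.} Since $\tilde\Om_i(t)$ is an $\Sp(2)\Sp(1)$-structure compatible with $F_i^* g_i$, and the fibre $\SO(8)/\Sp(2)\Sp(1)$ of $\pi:N\to M$ is compact, one has an automatic $C^0$ bound on $\tilde\Om_i(t)$ (it takes values in a bounded $\SO(8)$-orbit inside $\Lm^4$ measured by the reference metric $g$). From $\nabla\Om = T\diamond \Om$ and the hypothesis \eqref{compactnesshypothesisone}, we obtain uniform $C^1$ bounds on $\tilde\Om_i(t)$. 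To bootstrap to $C^k$ bounds one derives Bernstein--Shi-type estimates for the flow: differentiating \eqref{evolutionoftorsionequation} and invoking the Bianchi identity \eqref{eq: bianchi identity} to commute covariant derivatives, one obtains schematically
\begin{equation*}
    (\partial_t - \Delta)|\nabla^k T|^2 \leq C_k\bigl(|\nabla^k T|^2 + \text{lower-order terms in } \nabla^j T, \nabla^j R\bigr),
\end{equation*}
and the assumed bounds on $T$ and the curvature derivatives allow an iterative maximum-principle/interpolation argument to yield uniform $C^k$ estimates on $T_i$, hence on $\nabla^{k+1}\Om_i$, over $M_i\times(a,b)$ (with some mild interior-in-time loss, absorbed in the end by a diagonal argument over $(a,b)$). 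Uniform bounds on $\partial_t^\ell \nabla^k \tilde\Om_i(t)$ then follow by repeatedly applying \eqref{harmonicQKflowequation} and the evolution equation \eqref{evolutionoftorsionequation}, expressing all time derivatives in terms of spatial derivatives of $T$ and of $\Om$.

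\textbf{Stage 3: Extraction and verification.} Equipped with uniform $C^\infty_{\mathrm{loc}}$ spacetime bounds on $\tilde\Om_i(t)$, a standard Arzel\`a--Ascoli argument, combined with a diagonal subsequence over an exhaustion of $M\times(a,b)$, produces a subsequential limit $\Om(t)\in \Gamma(M,\Lm^4)$, smooth in space and time. Since each $\tilde\Om_i$ is pointwise $\Sp(2)\Sp(1)$-modelled, so is $\Om(t)$, and passing to the limit in \eqref{harmonicQKflowequation} (the operators $\nabla$, $\diamond$, $\ip_3$ all depending continuously on $g$, which converges) shows that $\Om(t)$ solves the QK harmonic flow on $(a,b)$. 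This yields the desired convergence $(M_i,\Om_i(t),p_i)\to (M,\Om(t),p)$.

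\textbf{Main obstacle.} The delicate point is the bootstrap in Stage 2: one must verify that the non-linear terms appearing in the evolution of $|\nabla^k T|^2$ are genuinely controlled by the preceding estimates. The representation-theoretic cancellations embodied in Proposition \ref{bianchiidentityproposition} and Proposition \ref{evolutionoftorsionproposition} (notably that several natural quadratic expressions in $T$ have no $\Lm^2_{15}$-component) are what make the Shi-type estimates close, exactly as in the $\Spin(7)$ situation; the scheme of \cite{Dwivedi2021} transposes with essentially only notational changes, which is the point of our unified approach through the $E$--$H$ formalism.
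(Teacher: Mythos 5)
Your proposal follows the same three-stage scheme as the paper: Cheeger--Gromov--Hamilton compactness for the (time-independent) metrics using \eqref{compactnesshypothesistwo}--\eqref{compactnesshypothesisthree}, Shi-type derivative estimates on the torsion from \eqref{compactnesshypothesisone} (the paper cites \cite{Dwivedi2021}*{Proposition 2.16} directly rather than rederiving them), and Arzel\`a--Ascoli with a diagonal argument to extract the limit $4$-form. So the architecture is the same.

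The one place you genuinely diverge from the paper is the last step, which the paper flags explicitly as the non-trivial point: showing that the limit $4$-form $\Om(t)$ is still an $\Sp(2)\Sp(1)$-structure. You argue from closedness of the pointwise $\SO(8)$-orbit of the model form, which is plausible but a little delicate because each $\tilde\Om_i(t)$ is modelled against the \emph{varying} metric $F_i^*g_i$, so the relevant orbit is moving; one must also explain why the limit cannot be degenerate (non-degeneracy is an open, not closed, condition in $\Lm^4$). The paper instead passes to the limit in the explicit closed-form expression \eqref{eq: qkmetricfromOm} from Proposition \ref{prop: metricfromOmproposition}, which reconstructs $g_\Om$ algebraically from $\Om$: since $F_i^*\Om_i \to \Om$ smoothly and $F_i^*g_i \to g$, continuity of that formula forces $g_\Om = g$, which immediately rules out degeneration and certifies $\Om(t)$ as a genuine $\Sp(2)\Sp(1)$-structure compatible with the limit metric. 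Your orbit argument can be made rigorous, but the paper's use of the algebraic metric formula is cleaner and is in fact one of the reasons Proposition \ref{prop: metricfromOmproposition} was established earlier; you would do well to invoke it here.
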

\begin{proof}
Since the proof is analogous to the $\rG_2$ and $\Spin(7)$ cases, we shall only highlight the key parts of the argument, referring the reader to \cite{Dwivedi2019}*{Theorem 3.13} and \cite{Dwivedi2021}*{Theorem 4.19} for further detail.

In order to obtain the limit space $(M,g,p)$ as a complete pointed Riemannian manifold, we resort to the Cheeger-Gromov compactness theorem, cf. \cite{Hamilton1995}*{Theorem 2.3}. This relies on hypotheses \eqref{compactnesshypothesistwo} and \eqref{compactnesshypothesisthree}: condition \eqref{compactnesshypothesistwo} ensures that collapsing/degeneration-type phenomena does not occur, and \eqref{compactnesshypothesisthree} ensures that curvature does not concentrate along the limiting process. More precisely, Cheeger-Gromov compactness gives rise to an exhausting family of compact nested sets $U_i \subset M^8$ and diffeomorphisms $F_i:U_i \to F_i(U_i) \subset M_i^8$, such that $g =\lim F_i^*g_i$. Note that here we are also using the fact that the metric is unchanged under the flow, i.e. $g_i(0)=g_i(t)$.

Next we need to obtain the limit $4$-form $\Om(t)$, which, by contrast to the metric, does vary with time. In view of the  Shi-type estimates for the general harmonic flow of $H$-structures  \cite{Dwivedi2021}*{Proposition 2.16}, 
Assumption \eqref{compactnesshypothesisone} guarantees uniform bounds on all derivatives of torsion, for all time $t \in (a,b)$. Appealing to the Arzel{\'a}-Ascoli theorem, we can extract a $4$-form $\Om(t)$ on $M$ as the limit of $F_i^*\Om_i(t)$. Now, it is not a priori clear that the limit $\Om(t)$ also defines an $\Sp(2)\Sp(1)$-structure on $M$. 
To deduce to latter, we take the limit of \eqref{eq: qkmetricfromOm} and use the fact that the Riemannian metric $g$ arises as the Cheeger-Gromov limit of $(g_i)$. This concludes the proof.
\end{proof}

\subsection{The almost-monotonicity formula}
\label{almostmonotonicitysubsection}

This section is strongly based on the celebrated methods developed by Hamilton in \cite{Hamilton1993}, which we invite the unfamiliar reader to consult. 
Let $(M^8,g)$ be a complete Riemannian manifold. For $p \in M^8$, we denote by $u_{(p,t_0)}$ a positive fundamental solution of the backward heat equation, starting from the delta function at $p$, at time $t_0$, i.e.
\[\Big(\frac{\partial}{\partial t}+\Delta\Big)u_{(p,t_0)}=0, \ \ \ \lim_{t \to t_0} u_{(p,t_0)}(t)= \delta_{p},
\]
and we set 
$$u_{(p,t_0)}
=\frac{\exp\{-f_{(p,t_0)}\}}{\big(4 \pi(t_0-t)\big)^4}. 
$$  
In what follows we shall simply write $u=u_{(p,t_0)}$.
Suppose now that we have a solution to the harmonic QK flow (\ref{harmonicQKflowequation}) on $(M^8,g)$, defined for $t\in[0,t_0)$. Then, following \cite{GraysonHamilton1996}, we define the functional
\begin{equation}
\label{ThetaFunctional}            \Theta_{(p,t_0)}(\Om(t)) 
    := (t_0-t)\int_M u(t) |T(t)|^2\ \vol,
\end{equation}
which is invariant under parabolic rescaling -- unlike the energy functional $E$. 
Our first goal is to prove that $\Theta$ satisfies an almost-monotonicity formula. We begin by proving the following key lemma.
\begin{lemma}
\label{preAlmostMonotonicityLemma}
    Under the harmonic QK flow (\ref{harmonicQKflowequation}), the functional $\Theta$ evolves by
\begin{align}
    \frac{\partial}{\partial t}\Theta 
    = &-2(t_0-t) \int_M u |\mathrm{div} T-T(\nabla f)|^2\ \vol\label{preAlmostMonotonicityEquation}\\ 
    &-2(t_0-t) \sum_{i=1}^8 \int_M g( T(\nabla_{E_i}\nabla u),T(E_i))\ \vol +2(t_0-t) \int_M \frac{|T(\nabla u)|^2}{u}\ \vol- \int_M u |T|^2\ \vol\nonumber\\ 
    &+2(t_0-t) \sum_{i=1}^8 \int_M g(R(\nabla u,E_i),T(E_i))\ \vol,\nonumber
\end{align}
where $\{E_i\}$ denotes a local orthonormal framing.
\end{lemma}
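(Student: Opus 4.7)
The plan is to differentiate $\Theta$ directly under the integral sign and massage the resulting terms into the claimed form via two rounds of integration by parts plus one application of the Bianchi identity from Proposition~\ref{bianchiidentityproposition}. The product rule gives three contributions:
\begin{equation*}
\partial_t\Theta = -\int_M u|T|^2\vol + (t_0-t)\int_M (\partial_t u)|T|^2\vol + (t_0-t)\int_M u\, \partial_t|T|^2\vol.
\end{equation*}
The first reproduces the $-\int u|T|^2\vol$ term in \eqref{preAlmostMonotonicityEquation}; the backward heat equation $\partial_t u=-\Delta u$ converts the second into $-(t_0-t)\int(\Delta u)|T|^2\vol$; and Corollary~\ref{evolutionoftorsionsquare} turns the third into $2(t_0-t)\sum_i\int_M u\, g(\nabla_{E_i}\mathrm{div}\,T,T(E_i))\vol$.

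The easier integration by parts disposes of the torsion-evolution term. Since $\sum_i(\nabla_{E_i}T)(E_i)=\mathrm{div}\,T$, the divergence theorem applied to the vector field metric-dual to $X\mapsto u\,g(\mathrm{div}\,T,T(X))$ yields
\begin{equation*}
\sum_i\int_M u\,g(\nabla_{E_i}\mathrm{div}\,T,T(E_i))\vol = -\int_M u|\mathrm{div}\,T|^2\vol - \int_M g(\mathrm{div}\,T,T(\nabla u))\vol.
\end{equation*}
The more delicate step is the treatment of $\int(\Delta u)|T|^2\vol$. Writing $\Delta=\mathrm{div}\,\nabla$ and integrating by parts once reduces this to $-2\sum_i\int_M g((\nabla_{\nabla u}T)(E_i),T(E_i))\vol$. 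I then invoke the Bianchi identity \eqref{eq: bianchi identity} with $X=\nabla u$, $Y=E_i$: its quadratic remainder is orthogonal to $\Lm^2_{15}$ by the refinement \eqref{eq: bianchi identity2}, so it drops out when paired with $T(E_i)\in\Lm^2_{15}$, giving
\begin{equation*}
g((\nabla_{\nabla u}T)(E_i),T(E_i)) = g((\nabla_{E_i}T)(\nabla u),T(E_i)) + g(R(\nabla u,E_i),T(E_i)).
\end{equation*}
Unpacking $(\nabla_{E_i}T)(\nabla u) = \nabla_{E_i}(T(\nabla u)) - T(\nabla_{E_i}\nabla u)$ and performing a second integration by parts---now with $T(\nabla u)$ playing the role that $\mathrm{div}\,T$ played above---extracts precisely the Hessian, curvature and cross $\mathrm{div}\,T$ terms appearing in \eqref{preAlmostMonotonicityEquation}.

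It remains to assemble the pieces and recognise the perfect square. Using the Gaussian identity $\nabla u=-u\nabla f$, which gives $T(\nabla f)=-T(\nabla u)/u$, one expands
\begin{equation*}
u|\mathrm{div}\,T - T(\nabla f)|^2 = u|\mathrm{div}\,T|^2 + 2g(\mathrm{div}\,T,T(\nabla u)) + |T(\nabla u)|^2/u,
\end{equation*}
and a term-by-term comparison matches the right-hand side of \eqref{preAlmostMonotonicityEquation}. The main obstacle is the Bianchi step: one must use that the quadratic $T\otimes T$ remainder in \eqref{eq: bianchi identity} lies in $\Lm^2_3\oplus\Lm^2_{10}$---the content of Lemma~\ref{Lm215andLm215pairingLemma}---so that its pairing with $T(E_i)\in\Lm^2_{15}$ vanishes. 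Once this algebraic observation is in place, the rest is systematic integration-by-parts bookkeeping.
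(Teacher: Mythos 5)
Your proposal is correct and follows essentially the same path as the paper's own proof: differentiate under the integral using the backward heat equation and Corollary~\ref{evolutionoftorsionsquare}, integrate by parts twice, apply the Bianchi identity~\eqref{eq: bianchi identity2} with $X=\nabla u$, $Y=E_i$ so that the quadratic $T\otimes T$ remainder drops upon pairing with $T(E_i)\in\Lm^2_{15}$, and complete the square via $T(\nabla f)=-T(\nabla u)/u$. The only cosmetic difference is that you cite Lemma~\ref{Lm215andLm215pairingLemma} for the vanishing of the quadratic remainder, whereas the paper invokes the refined Bianchi identity~\eqref{eq: bianchi identity2} directly; these express the same representation-theoretic fact.
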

\begin{proof}
Using the definition of $u$, a direct calculation shows that
\[\frac{\partial}{\partial t}\Theta = \int_M - u |T|^2+(t_0-t)  u  \frac{\partial |T|^2}{\partial t}-(t_0-t)  \Delta u  |T|^2\ \vol.\]
Let us consider the last summand in the above expression. 
Integrating by parts we have
\begin{align*}
    \int_M \Delta u |T|^2\ \vol 
    &= - \int_M g( \nabla u,\nabla |T|^2)\ \vol
    = -2 \int_M g( \nabla_{\nabla u} T,T)\ \vol\\
    &= -2 \sum_{i=1}^8 \int_M g( (\nabla_{\nabla u} T)(E_i),T(E_i))\ \vol\\
    &= -2 \sum_{i=1}^8 \int_M g( (\nabla_{E_i} T)(\nabla u)+R(\nabla u,E_i),T(E_i))\ \vol\\
    &= +2\int_M g(\mathrm{div} T, T(\nabla u))\ \vol -2 \sum_{i=1}^8 \int_M g( -T(\nabla_{E_i}\nabla u)+R(\nabla u,E_i),T(E_i))\ \vol
\end{align*}
    where we used the Bianchi type identity of Proposition \ref{bianchiidentityproposition} and the fact that $T(E_i)\in \Om^2_{15}$ for the penultimate equality.
    Another integration by parts, together with Corollary \ref{evolutionoftorsionsquare}, shows that
\begin{equation}
    \int_M u  \frac{\partial |T|^2}{\partial t} \ \vol 
    = -2 \int_M u |\mathrm{div}T|^2 +g(\mathrm{div}T, T(\nabla u))\ \vol.
\end{equation}
    Combining the above, we have
\begin{align*}
    \frac{\partial}{\partial t}\Theta 
    = &- \int_M u |T|^2\ \vol -2(t_0-t) \int_M u |\mathrm{div}T|^2 +2g(\mathrm{div}T, T(\nabla u))\ \vol\\ 
    &-2(t_0-t) \sum_{i=1}^8 \int_M g( T(\nabla_{E_i}\nabla u)-R(\nabla u,E_i),T(E_i))\ \vol\\
    = &- \int_M u\cdot |T|^2\ \vol -2(t_0-t) \int_M u |\mathrm{div} T-T(\nabla f)|^2-\frac{|T(\nabla u)|^2}{u}\ \vol\\ 
    &-2(t_0-t) \sum_{i=1}^8 \int_M g( T(\nabla_{E_i}\nabla u)-R(\nabla u,E_i),T(E_i))\ \vol.
    \qedhere
\end{align*}
\end{proof}

Equipped with the above lemma, we now apply the same argument as in \cite{Dwivedi2021}*{Theorem 5.2} to obtain the following monotonicity result: 

\begin{theorem}[Weak almost-monotonicity formula]
\label{almostmonotonicitytheorem}
    Let $\{\Om(t)\}$ be a solution of the harmonic QK flow (\ref{harmonicQKflowequation}) on $(M,g)$, and let  $0<\tau_1<\tau_2<t_0$. The following assertions hold:
\begin{enumerate}
    \item If $M$ is compact, then there exist constants $K_1,K_2>0$, depending only on the geometry of $(M,g)$, such that 
\begin{equation}
    \Theta(\Om(\tau_2)) 
    \leq K_1 \Theta(\Om(\tau_1)) + K_2(\tau_2-\tau_1) (E(\Om(0))+1).
\end{equation}
    
    \item If $M=\R^8$ with its Euclidean structure, then
\begin{equation}
    \Theta(\Om(\tau_2)) 
    \leq \Theta(\Om(\tau_1)).
\end{equation}
\end{enumerate}
\end{theorem}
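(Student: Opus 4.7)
The plan is to integrate the evolution identity of Lemma \ref{preAlmostMonotonicityLemma} from $\tau_1$ to $\tau_2$ and control each term on the right-hand side. The first summand is manifestly non-positive and can be discarded when seeking upper bounds on $\Theta$, so all the work lies in the remaining three terms together with the curvature contribution.

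For part 2, on Euclidean $\R^8$ the backward heat kernel has the explicit form
\[
u=\frac{1}{(4\pi(t_0-t))^4}\exp\Bigl(-\frac{|x-p|^2}{4(t_0-t)}\Bigr),
\]
which satisfies the pointwise identity
\[
\nabla^2 u \;=\; u^{-1}\,\nabla u\otimes\nabla u \;-\; \frac{1}{2(t_0-t)}\,u\,g,
\]
while the Riemann tensor vanishes identically. Contracting the right-hand side against $T\otimes T$ via the first slot of $T$ produces exactly $u^{-1}|T(\nabla u)|^2 - \tfrac{u}{2(t_0-t)}|T|^2$ pointwise, so the first summand in the second line of \eqref{preAlmostMonotonicityEquation} becomes $-2(t_0-t)u^{-1}|T(\nabla u)|^2+u|T|^2$, which cancels the remaining two terms of that line. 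Hence
\[
\frac{\partial}{\partial t}\Theta \;=\; -2(t_0-t)\int_{\R^8} u\,\bigl|\mathrm{div}\,T-T(\nabla f)\bigr|^2\,\vol \;\leq\; 0,
\]
and $\Theta(\Om(\tau_2))\leq\Theta(\Om(\tau_1))$ follows by integration.

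For part 1, on a compact manifold the Euclidean identity is only approximate. Standard Hamilton/Li--Yau-type heat-kernel estimates (as used in the $\Spin(7)$-setting of \cite{Dwivedi2021}*{Theorem 5.2}) yield
\[
\Bigl|\nabla^2 u \;-\; u^{-1}\,\nabla u\otimes\nabla u \;+\; \frac{u}{2(t_0-t)}\,g\Bigr| \;\leq\; C_1\,u,
\]
with $C_1$ depending only on curvature bounds and the injectivity radius of $(M,g)$. This converts the exact cancellation of the Euclidean case into the bound $C_1(t_0-t)\int_M u|T|^2\vol = C_1\Theta$ for the total contribution of the second line. The curvature term is controlled by Cauchy--Schwarz together with the Gaussian upper bound $|\nabla u|\leq C\,u/\sqrt{t_0-t}$ and the normalisation $\int_M u\,\vol = 1$; applying Young's inequality then gives an estimate of the form $A'\Theta + B'(E(\Om(t))+1)$. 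Combining these yields a differential inequality
\[
\frac{d\Theta}{dt} \;\leq\; A\,\Theta \;+\; B\,(E(\Om(t)) + 1),
\]
where $E(\Om(t))\leq E(\Om(0))$ by the gradient-flow property expressed in Corollary \ref{evolutionoftorsionsquare}. Grönwall then integrates this to the stated bound, with $K_1 = e^{A(\tau_2-\tau_1)}$ and $K_2$ the product of $B$ with the exponential factor.

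The principal technical obstacle is securing the compact-manifold heat-kernel estimate with constants depending only on the background geometry of $(M,g)$, independently of the $\Sp(2)\Sp(1)$-structure; a careful derivation must also ensure the Gaussian gradient bound holds uniformly on the time interval of interest. Once these analytic inputs are in place, the proof reduces to an algebraic rearrangement of the Bianchi-controlled identity of Lemma \ref{preAlmostMonotonicityLemma} followed by Grönwall, precisely as in the $\Spin(7)$ case, with the representation-theoretic content of the identity itself already having absorbed the structural differences between the two settings.
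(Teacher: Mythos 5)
Your overall strategy---integrate the evolution identity of Lemma \ref{preAlmostMonotonicityLemma}, discard the manifestly non-positive good term, bound the Hessian and curvature contributions, and then apply Gr\"onwall---is the same as the paper's. Part 2 you handle correctly: the Euclidean Hessian identity $\nabla^2 u = u^{-1}\nabla u\otimes\nabla u - \tfrac{u}{2(t_0-t)}g$ cancels the second line of \eqref{preAlmostMonotonicityEquation} exactly and the curvature term vanishes, which is all the paper claims there.

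For part 1, however, the heat-kernel input you invoke is too strong. The Hamilton/Grayson--Hamilton matrix estimate does not give the uniform pointwise bound
\[
\Bigl|\nabla^2 u - u^{-1}\nabla u\otimes\nabla u + \tfrac{u}{2(t_0-t)}g\Bigr| \leq C_1\,u
\]
on a general compact $(M,g)$; the correct estimate carries a time-dependent coefficient of the form $\tfrac{1}{t_0-t}\bigl(1+\log\tfrac{B}{(t_0-t)^{n/2}}\bigr)$, and similarly the Gaussian gradient bound you quote, $|\nabla u|\leq Cu/\sqrt{t_0-t}$, fails pointwise even on $\R^8$ (there $|\nabla u|=\tfrac{|x|}{2(t_0-t)}u$, which is unbounded in $|x|$). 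Once the Hessian and gradient estimates are used in their correct logarithmic form, the differential inequality you obtain is not $\Theta'\leq A\Theta+B(E+1)$ with a constant $A$, but
\[
\frac{d}{dt}\Theta \;\leq\; C\Bigl(1+\log\frac{B}{(t_0-t)^4}\Bigr)\Theta + C\bigl(1+E(\Om(0))\bigr),
\]
which is exactly why the paper introduces the auxiliary function $\xi(t)$ solving $\xi'(t)=1+\log\bigl(\tfrac{B}{(t_0-t)^4}\bigr)$ and uses the integrating factor $e^{-C\xi(t)}$. Since $\int_0^{t_0}\bigl|\log(t_0-t)\bigr|\,dt<\infty$, this still produces constants $K_1,K_2$ depending only on the geometry, so the conclusion survives---but your argument, as written, asserts a cleaner estimate than is actually available, and a careful referee would flag the missing logarithm. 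To repair the proof, replace the claimed pointwise bounds by the Hamilton estimates in their logarithmic form and run Gr\"onwall with the non-constant coefficient as in the paper (or in \cite{Dwivedi2021}*{Theorem 5.2}).
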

\begin{proof}
    The proof follows Hamilton's original argument, which also appears in detail in \cites{Dwivedi2019, Dwivedi2021}, so we shall only outline the key steps. It is worth emphasising that, although we consider here the structure group $H=\Sp(2)\Sp(1)$, whereas \cite{Dwivedi2019} considers $H=\rG_2$ and \cite{Dwivedi2021} considers $H=\Spin(7)$, the argument is essentially the same with $T \in \Gamma(\Lm^1 \otimes \mathfrak{h}^{\perp}).$ In other words, the proof is independent of the structure group $H \subset \SO(n)$, so long as a Bianchi-type identity holds. We illustrate this below by avoiding multi-index computations specific to some choice of $H$. 

    Integrating by parts the last term of (\ref{preAlmostMonotonicityEquation}),  
\begin{equation}
\label{curvatureterm}
    2(t_0-t) \sum_{i=1}^8 \int_M g(R(\nabla u,E_i),T(E_i))
\end{equation} 
    and using again (\ref{eq: bianchi identity}) gives an integral involving $u$, $T$, $R$ and $\nabla R$ only. First note that the curvature terms only depend on $g$ and hence are bounded. Since $\int_M u\ \vol =1$ and $E(\Om(t))$ is decreasing, it follows that (\ref{curvatureterm}) is bounded by  
\[C(1+\Theta(\Om(t))),\]
    where $C$ is a constant determined by the geometry of $(M^8,g)$. 
    For the second term of (\ref{preAlmostMonotonicityEquation}), using again that $E(\Om(t))$ is decreasing, a standard argument shows that we can bound it by
\[C(E(\Om(0)))+\log\Big( \frac{B}{(t_0-t)^4}\Big)\Theta(\Om(t)).\]
    Combining the above, we have
\begin{align}
\label{bound1}
    \frac{\partial}{\partial t} \Theta(\Om(t)) \leq &-2(t_0-t) \int_M u |\mathrm{div} T-T(\nabla f)|^2\ \vol\\
    &+C(1+\log\Big( \frac{B}{(t_0-t)^4}\Big))\Theta(\Om(t)) + C(1+E(\Om(0))).\nonumber
\end{align}
    Let $\xi(t)$ be a solution of the ODE
\[\xi'(t)=1+\log\Big( \frac{B}{(t_0-t)^4}\Big).\]
    Then we can rewrite (\ref{bound1}) as
\[\frac{\partial}{\partial t}(e^{-C \xi(t)}\Theta(\Om(t)))\leq C(1+E(\Om(0)))\]
    and the first claim now follows. 
    The second claim is immediate from the explicit expression of the backwards heat kernel, see (\ref{Euclideanheatkernel}) below.
\end{proof}

The key application for the above monotonicity formula is the following $\epsilon$-regularity theorem, which we shall use to study singularities of the flow in \S\ref{sec: singularityformation}. We follow the approach employed by Grayson-Hamilton  in the context of harmonic map heat flow \cite{GraysonHamilton1996}.

\begin{theorem}[$\epsilon$-regularity]\label{epsilonregularityGraysonHamilton}
    Let $(M^8,g)$ be a compact Riemannian manifold and let $E_0$ be a positive constant. There exist constants $\epsilon, \overline{\rho}>0$ such that, for every $\rho \in (0,\overline{\rho}]$, there exist $r\in(0,\rho)$ and $C<\infty$ with the following significance.

    Suppose $\{\Om(t)\}_{t\in[0,t_0)}$ is a solution to the harmonic QK flow (\ref{harmonicQKflowequation}), inducing $g$ and satisfying $E(\Om(0))\leq E_0$. If
\[\Theta_{(p,t_0)}(\Om(t_0-\rho^2))<\epsilon,\]
    for some $p\in M^8$, then  $\Lambda_{r}(x,t):=\mathrm{min}\Big(1-r^{-1}d_g(p,x),\sqrt{1-r^{-2}(t_0-t)}\Big)$ satisfies
\[
\Lambda_r(x,t)|T(\Om(x,t))| \leq \frac{C}{r},
\quad\forall
(x,t)\in B(x_0,r)\times [t_0-r^2,t_0].
\]
\end{theorem}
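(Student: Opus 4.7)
The plan is to argue by contradiction via a point-picking/blow-up scheme, combining the almost-monotonicity formula of Theorem \ref{almostmonotonicitytheorem} with the compactness Theorem \ref{thm: compactness}, following the Grayson-Hamilton strategy from the harmonic map heat flow \cite{GraysonHamilton1996}. Fix $E_0>0$ and $\rho\in(0,\overline{\rho}]$, and suppose the conclusion fails for every choice of $r$ and $C$. Then there is a sequence of flows $\{\Om_i(t)\}$ with $E(\Om_i(0))\leq E_0$, base-points $p_i$ and radii $r_i\in(0,\rho)$ satisfying $\Theta_{(p_i,t_{0,i})}(\Om_i(t_{0,i}-\rho^2))<\epsilon$, yet for which the supremum of $\Lambda_{r_i}(x,t) |T_i(x,t)|$ over the parabolic cylinder $B(p_i,r_i)\times[t_{0,i}-r_i^2,t_{0,i}]$ diverges. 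Hamilton's point-picking lemma then yields points $(x_i,t_i)$ where $\lambda_i:=|T_i(x_i,t_i)|\to\infty$ is nearly maximal, with $|T_i|$ uniformly controlled on a parabolic region of size $\lambda_i^{-1}$ around $(x_i,t_i)$.

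Next I would parabolically rescale via Corollary \ref{corollaryparabolic}, setting $\tilde{\Om}_i(\tilde{t}):=\lambda_i^4\Om_i(t_i+\lambda_i^{-2}\tilde{t})$ in Riemannian normal coordinates centred at $x_i$, so that by Lemma \ref{lem: paraboliclemma} one has $|\tilde{T}_i(0,0)|=1$ with $|\tilde{T}_i|$ uniformly bounded on expanding parabolic regions. Because the underlying metric on the compact manifold $M$ is fixed and smooth, the rescaled metrics $\lambda_i^2\exp_{x_i}^*g$ converge in the Cheeger-Gromov sense to flat Euclidean on $\R^8$, so the injectivity-radius and curvature hypotheses \eqref{compactnesshypothesistwo}--\eqref{compactnesshypothesisthree} of Theorem \ref{thm: compactness} hold automatically. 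Together with the point-picking torsion bound \eqref{compactnesshypothesisone}, this produces a subsequential limit flow $\tilde{\Om}_\infty$ of the QK harmonic flow on $(\R^8, g_{\mathrm{eucl}})$, defined on an ancient interval $(-\infty,0]$ and with $|\tilde{T}_\infty(0,0)|=1$. The scale-invariance of $\Theta$ transports the smallness hypothesis through the rescaling, while the almost-monotonicity error term $K_2\Delta\tilde{t}\,(E_0+1)\lambda_i^{-2}$ in Theorem \ref{almostmonotonicitytheorem}(1) vanishes in the limit; passing to Theorem \ref{almostmonotonicitytheorem}(2) and using the translation invariance of the Gaussian backward heat kernel on $\R^8$, one deduces $\Theta_{(0,0)}(\tilde{\Om}_\infty(\tilde{t}))\leq\epsilon$ for all $\tilde{t}\leq 0$.

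The final step is a small-energy regularity lemma: for $\epsilon$ sufficiently small, any ancient solution of the QK harmonic flow on Euclidean $\R^8$ with bounded torsion and $\Theta<\epsilon$ must be torsion-free at the origin, contradicting $|\tilde{T}_\infty(0,0)|=1$. This lemma would follow from a Bochner-type differential inequality of the form $(\partial_t-\Delta)|T|^2\leq C(|T|^4+|R|^2)$, derived from Corollary \ref{evolutionoftorsionsquare} and the Bianchi identity \eqref{eq: bianchi identity}, combined with the parabolic mean-value inequality against the explicit Euclidean backward heat kernel. The principal obstacle will be establishing this Bochner-type inequality with effective constants so that a Moser-type iteration closes for small $\epsilon$; this is precisely where the representation theory of $\Sp(2)\Sp(1)$, through Proposition \ref{bianchiidentityproposition} and Lemma \ref{Lm215andLm215pairingLemma}, does substantive work in absorbing the quartic-in-$T$ cross-terms arising from the curvature coupling in the evolution of $|T|^2$. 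A secondary technical concern is ensuring that the rescaled sequence is defined on common expanding time intervals, so that the Cheeger-Gromov limit genuinely yields an ancient Euclidean flow.
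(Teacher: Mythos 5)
Your overall strategy—contradiction, parabolic blow-up at a nearly-maximal torsion point, Cheeger–Gromov compactness via Theorem \ref{thm: compactness}, and the almost-monotonicity formula—tracks the paper's outline. But you handle the quantifiers differently from the paper, and this forces a much heavier final step that you do not actually carry out, leaving a genuine gap.

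In the paper's argument, the contradiction hypothesis is that \emph{no} pair $(\epsilon,\overline\rho)$ works, so one takes sequences $\epsilon_i,\overline\rho_i\to 0$ with counterexample flows $\{\Om_i\}$ for each $i$. After parabolic rescaling, the scale-invariance of $\Theta$ together with Theorem \ref{almostmonotonicitytheorem}(1) (whose error term is proportional to $\tau_2-\tau_1<\rho_i^2\to 0$) gives $\Theta_{(0,0)}(\Om_\infty(\tilde t))=0$ for all $\tilde t<0$. Since $\Theta$ is, by definition \eqref{ThetaFunctional}, the integral of the nonnegative quantity $(t_0-t)\,u\,|T|^2$ with $u>0$ everywhere on $\R^8$ for $t<t_0$, this vanishing forces $T_\infty\equiv 0$ directly, contradicting $|T_\infty(0,0)|=1$. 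No Bochner-type inequality and no Moser iteration are needed.

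You instead fix $\epsilon$ at the outset (which is not the negation of the $\exists\epsilon,\overline\rho$ quantifier), so the rescaled limit only satisfies $\Theta_{(0,0)}(\tilde\Om_\infty(\tilde t))\leq\epsilon$ with $\epsilon$ fixed and small, and you then invoke an unproven ``small-energy regularity lemma'' on Euclidean $\R^8$ to conclude $T_\infty\equiv 0$. That lemma is plausible—analogues hold for the harmonic map heat flow à la Chen--Struwe, and your sketch via a parabolic Bochner inequality for $|T|^2$ and Moser iteration is the right shape—but proving it with effective constants is substantive work (estimating the quartic-in-$T$ reaction terms is exactly the difficulty), and it is precisely the work the paper's $\epsilon_i\to 0$ device renders unnecessary. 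If you keep your structure, you must establish that lemma and then explain how $\epsilon$ in the theorem is chosen from the lemma's threshold and the almost-monotonicity constants $K_1,K_2$; otherwise, the cleaner route is to let $\epsilon_i,\overline\rho_i\to 0$ in the contradiction as the paper does.
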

\begin{proof}
    In view of the weak almost-monotonicity formula in Theorem \ref{almostmonotonicitytheorem}, the proof of Theorem \ref{epsilonregularityGraysonHamilton} is now completely analogous to those in  \cite{Dwivedi2019}*{Theorem 5.7} and \cite{Dwivedi2021}*{Theorem 5.5}, so we shall only detail its key moments.

    Suppose by contradiction that, for any sequences $\varepsilon_i,\bar{\rho}_i \to 0$, there exist $\rho_i \in (0,\bar{\rho}_i]$ such that, given any $r_i \in (0,\rho_i)$ and $C_i\to \infty$, there exist counterexamples $\{\Om_i(t)\}_{t\in[0,t_i)}$  such that
\begin{equation}
    E(\Om(0))\leq E_0
    \qandq
    \Theta_{(p_i,t_i)}(\Om(t_i-\rho_i^2))<\epsilon_i,
\end{equation}
but
\begin{gather}
\label{contracditionequation}
    r_i\Big(\max_{B(x_i,r_i)\times [t_i-r^2_i,t_i]} \Lambda_{r_i}(x,t)|T(\Om_i(x,t))|\Big) > C_i,
\end{gather}
    for some $x_i\in M$. Setting $Q_i:=|T(\Om_i(\bar{x}_i,\bar{t}_i))|$, where $(\bar{x}_i,\bar{t}_i)$ denotes the point where the maximum is attained, we can consider the  parabolic rescaled flow
\begin{gather*}
    \widetilde{\Om}_i(t) :=Q_i^4\Om_i(\bar{t}_i+Q_i^{-2}t),
\end{gather*}
    as in Corollary \ref{corollaryparabolic}, with $c=Q_i$. Using (\ref{contracditionequation}) and the definition of $Q_i$, we find that 
\[|T(\widetilde{\Om}_i(t))(\bar{x}_i,0)|=1.\] 
    Now compactness, from Theorem \ref{thm: compactness}, implies that the limit of the rescaled flow $(M,\widetilde{\Om}_i(t),\bar{x}_i)$ is the ancient solution $(\R^8,\Om_{\infty}(t),0)$ and satisfies $|T({\Om}_{\infty})(0,0)|=1.$
    On the other hand, taking the limit of $\Theta$ in the monotonicity formula of Theorem \ref{almostmonotonicitytheorem} shows that $|T({\Om}_{\infty})(0,0)|=0$, which gives the desired contradiction.
\end{proof}
Although the almost-monotonicity formula of Theorem \ref{almostmonotonicitytheorem} is sufficient for analysing singularities of the flow, we shall need a more refined monotonicity formula to obtain long time existence given small initial energy. To this end we modify the functional $\Theta$ as follows. 

We may assume, without loss of generality,  that $(M^8,g)$ has injectivity radius at least $1$, and introduce geodesic normal coordinates $x_i$ in a unit ball around any given point $p\in M$  via the exponential map 
$$\exp\big|_p:B(0,1)\subset \R^8\cong T_pM\to B(p,1)\subset M.
$$
Let $\phi$ be a test function on $\R^8$, with compact support in $B(0,1)$
and constant on $B(0,1/2)$, and let $G$ denote the usual Euclidean backward heat kernel on $\R^8$:
\begin{equation}
\label{Euclideanheatkernel}
    G=\frac{1}{(4\pi(t_0-t))^4} \exp(-\frac{|x|^2}{4(t_0-t)}),
\end{equation}
where $|x|^2=x_1^2+\cdots+x_8^2$. For $0<t<t_0\leq \tau$, we define
\[
    Z(t) = (t_0-t)\int_M |T(t)|^2G\phi^2 \vol
    = (t_0-t)\int_{\R^8}|T(t)|^2G\phi\sqrt{|g|}dx.
\]
We should emphasise that $|\cdot|$ here denotes the norm with respect to $g$ (not the Euclidean metric) and also that the integrand is only supported on $B(p,1)\cong B(0,1)$. In contrast to the functional $\Theta$ defined by
(\ref{ThetaFunctional}), observe that 
now we are using the Euclidean heat kernel $G$, rather than $u$, and we are only working locally in geodesic unit balls where the function $\phi$ is supported.
As with $\Theta(t)$, we shall compute the evolution of $Z(t)$. 

A subtle point here is that, in geodesic normal coordinates at $p$, the metric $g$ is approximately Euclidean, and we already saw in Theorem \ref{almostmonotonicitytheorem} that $\Theta(t)$ is indeed monotone on Euclidean $\R^8$, so the trick is to exploit this approximation in $B(p,1)$ using the functional $Z(t)$. This was done for the harmonic map heat flow (of maps) by Chen and Struwe \cite{ChenStruwe1989}, and it was recently adapted to the harmonic flow of almost-Hermitian structures by He and Li \cite{HeLi2021}, albeit with some subtleties, see Remark \ref{rem: compare to He-Li} below.

\begin{theorem}
\label{monotonicitytheorem1}
    For any $N>1$ and $t_1,t_2$ such that 
    $$
    t_0-\mathrm{min}\{0,t_0\}<t_1\leq t_2<t_0,
    $$
    the following monotonicity formula holds:
\[
    Z(t_2)\leq e^{C(f(t_2)-f(t_1))}Z(t_1)+C\Big(N^{4}(E(0)+\sqrt{E(0)}+\frac{1}{\log^2N})\Big)(t_2-t_1),
\]
    where $C$ is a constant depending only on $(M,g)$, and
\[f(t)=(t_0-t)
\left\{
-26+26 \log(t_0-t) 
-13\log^2(t_0-t)
+4\log^3(t_0-t)
-\log^4(t_0-t)
\right\}.\]
\end{theorem}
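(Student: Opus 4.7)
The plan is to imitate the Chen--Struwe argument \cite{ChenStruwe1989} as adapted by He--Li \cite{HeLi2021} to the harmonic flow of $\U(n)$-structures, using the Bianchi identity of Proposition \ref{bianchiidentityproposition} to handle the curvature term without relying on any structure-group-specific multi-index calculation.

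First I would compute $\tfrac{d}{dt}Z$. Using Corollary \ref{evolutionoftorsionsquare} and the backward Euclidean heat equation $\partial_t G + \Delta_{\R^8} G = 0$, one gets
\begin{equation*}
    \frac{d}{dt}Z(t) = -\!\int_M \!|T|^2 G\phi^2\,\vol + 2(t_0-t)\!\int_M \!g(\nabla\mathrm{div}\,T,T)\,G\phi^2\,\vol - (t_0-t)\!\int_M \!|T|^2 \Delta_{\R^8} G\,\phi^2\,\vol.
\end{equation*}
For the last summand I would write $\Delta_{\R^8}G = \Delta_g G + (\Delta_{\R^8}-\Delta_g)G$, then integrate by parts with respect to $g$; for the middle summand, integration by parts (absorbing $\nabla G$ into $T(\nabla G)$ and invoking Proposition \ref{bianchiidentityproposition} exactly as in Lemma \ref{preAlmostMonotonicityLemma}) would produce the good square term $-2(t_0-t)\int|\mathrm{div}\,T - T(\nabla\log G)|^2 G\phi^2\,\vol$, a curvature contribution, and error terms supported on $\mathrm{supp}(\nabla\phi)\subset B(0,1)\setminus B(0,1/2)$.

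The next step is the localisation dichotomy. Fix $N>1$ and split $B(0,1)=A_{\rm in}\cup A_{\rm out}$ with $A_{\rm in}=\{|x|\leq N\sqrt{t_0-t}\}$ and $A_{\rm out}$ its complement. On $A_{\rm out}$, the heat kernel and its derivatives decay like $e^{-N^2/4}/(t_0-t)^4$, which when multiplied by $(t_0-t)$ and combined with $E(\Om(t))\leq E(\Om(0))$ produces the $1/\log^2 N$-type tail (after choosing $N$ as a function of $t_0-t$ to balance); terms supported on $\mathrm{supp}(\nabla\phi)$ fall into this regime by construction and contribute to the $E(0)$ piece. On $A_{\rm in}$, geodesic normal coordinates give $g_{ij}=\delta_{ij}+O(|x|^2)$, so $(\Delta_{\R^8}-\Delta_g)G$ and $(\sqrt{|g|}-1)$ generate perturbations bounded by $|x|^2|\nabla^2 G|$ and $|x|^2|G|$ respectively; the key arithmetic identity $|x|^2 G \leq C(t_0-t)(1+\log^2\tfrac{1}{t_0-t})G$ (and its analogues for $|x|^4|\nabla^2 G|$) converts these metric-defect terms into factors of $(t_0-t)\log^k(t_0-t)$ times $|T|^2 G\phi^2$, i.e., into $f'(t)Z(t)$-type contributions after multiplying by the prefactor $(t_0-t)$.

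For the curvature term $(t_0-t)\int g(R(\nabla\log G,E_i),T(E_i))G\phi^2\,\vol$ produced by Proposition \ref{bianchiidentityproposition}, I would once more integrate by parts to trade one derivative of $T$ against one of $G$, absorbing the resulting quadratic $|T|^2$ contribution into $Z$ via the identity $|\nabla G|^2/G \leq C(|x|^2/(t_0-t)^2)G$, and handling the additional $\nabla R$ term via $|R|+|\nabla R|\leq C(M,g)$. Putting all of this together yields the pointwise-in-$t$ bound
\begin{equation*}
    \frac{d}{dt}Z(t) \leq C\log^4(t_0-t)\,Z(t) + C N^4\!\left(E(\Om(0))+\sqrt{E(\Om(0))}+\tfrac{1}{\log^2 N}\right),
\end{equation*}
where the coefficient $\log^4(t_0-t)$ comes from the worst metric-defect term (using $|x|^4|\nabla^2 G|$) and $\sqrt{E(\Om(0))}$ arises from Cauchy--Schwarz on the cross term with $\nabla G$. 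Since $f'(t)=\log^4(t_0-t)$ by direct differentiation, Gr\"onwall's inequality applied to $e^{-Cf(t)}Z(t)$ on $[t_1,t_2]$ delivers the claimed estimate.

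The main obstacle will be step two: bookkeeping the powers of $\log(t_0-t)$ that appear in the metric-defect terms so that they integrate into precisely the polynomial $f(t)$ given in the statement, and checking that the cutoff error from $\mathrm{supp}(\nabla\phi)$ and the tail on $A_{\rm out}$ assemble into the asserted $N^4(E(0)+\sqrt{E(0)}+1/\log^2 N)$ constant rather than into a term depending on $t_0-t$.
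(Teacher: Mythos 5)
Your overall plan does match the paper's approach closely: both follow the Chen--Struwe scheme (as adapted by He--Li), both differentiate $Z(t)$, integrate by parts, invoke the Bianchi identity of Proposition~\ref{bianchiidentityproposition} to trade the curvature for a term quadratic in $T$, and both track the metric defect $g_{ij}-\delta_{ij}=O(|x|^2)$ in geodesic normal coordinates before running a Gr\"onwall argument on $e^{-Cf}Z$. Your decomposition $\Delta_{\R^8}G = \Delta_g G + (\Delta_{\R^8}-\Delta_g)G$ is a tidy alternative to the paper's method of directly computing with $\nabla G = -\tfrac{x_i\nabla x_i}{2(t_0-t)}G$ and carefully switching between integration by parts against $\vol_g$ and against $dx$; the paper's bookkeeping makes the provenance of the quartic term $\tfrac{1}{t_0-t}\int |T|^2(|x|^2-g^{ik}x_ix_k)G\phi^2\vol$ fully explicit (this is the term emphasised in Remark~\ref{rem: compare to He-Li} as the source of the extra $\log$ powers relative to He--Li), but your version should also produce it since $(\Delta_{\R^8}-\Delta_g)G$ is $O(|x|^2)$ times second derivatives of $G$, which contribute $|x|^4/(t_0-t)^2 \cdot G$.

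Two concrete points need repair. First, the claim ``$f'(t)=\log^4(t_0-t)$'' is wrong: the $f$ given in the statement satisfies $f'(t)=\log^4(t_0-t)+\log^2(t_0-t)$, and both terms genuinely appear (the $\log^4$ from the quartic metric defect, the $\log^2$ from the quadratic one), so the Gr\"onwall step does not close with the stated $f$ if you only carry the $\log^4$ term. Second, the localisation is muddled: you propose a single split at $|x|\lessgtr N\sqrt{t_0-t}$ with ``$N$ chosen as a function of $t_0-t$ to balance,'' but $N$ must remain a fixed free parameter since it appears in the theorem's error term $CN^4(E(0)+\sqrt{E(0)}+1/\log^2 N)$. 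The paper keeps $N$ fixed and uses two \emph{different} dichotomies: the split $t_0-t\gtrless 1/N$ (to bound $G$ and the cutoff contribution on $\mathrm{supp}\,\nabla\phi$), and the separate time-dependent scale $|x|^2\lessgtr(t_0-t)\log^2(t_0-t)$ (to convert the $|x|^4$ and $|x|^2$ metric-defect terms into $\log^4(t_0-t)Z$ and $\log^2(t_0-t)Z$). You need both, kept separate, to arrive at the stated form.
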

\begin{proof}
    First we compute
\begin{align}
    \frac{d}{dt}Z(t)= 
    &- \int_M |T|^2G\phi^2 \vol\label{diffZt} \\
    &+2(t_0-t)\int_M g(\nabla\mathrm{div}(T), T)G\phi^2 \vol\nonumber\\ &+\int_M |T|^2\Big(\frac{n}{2}-\frac{|x|^2}{4(t_0-t)}\Big)G\phi^2 \vol, \nonumber
\end{align}
    using Corollary \ref{evolutionoftorsionsquare} for the second term, and the last term comes from differentiating $G$.
    Integrating by parts, we have
\begin{align}
    \int_M g(\nabla\mathrm{div}(T), TG\phi^2) \vol = 
    &- \int_M g(\mathrm{div}(T), \mathrm{div}(T)G\phi^2 +T(\nabla G)\phi^2 +2T(\nabla \phi)G\phi) \vol
    \label{firstintegrationbyparts}\\
    = &-\int_M \Big|\mathrm{div}(T)-\frac{T(x_i\nabla x_i)}{2(t_0-t)}\Big|^2G\phi^2 \vol\nonumber\\
    &-\int_M -\Big|\frac{T(x_i\nabla x_i)}{2(t_0-t)}\Big|^2 G\phi^2 + g(\mathrm{div}(T),\frac{T(x_i\nabla x_i)}{2(t_0-t)})G\phi^2\vol\nonumber\\
    &-\int_M g(\mathrm{div}(T),2T(\nabla \phi) G\phi) \vol,\nonumber
\end{align}
    where we used that $\nabla G=-\frac{x_i\nabla x_i}{2(t_0-t)}G$, as a vector field. Combining the above, we have
\begin{align*}
    \frac{d}{dt}Z(t)= &- \int_M |T|^2G\phi^2 \vol \\
    &-2(t_0-t)\int_M \Big|\mathrm{div}(T)-\frac{T(x_i\nabla x_i)}{2(t_0-t)}\Big|^2G\phi^2 \vol\\
    &-2(t_0-t)\int_M -\Big|\frac{T(x_i\nabla x_i)}{2(t_0-t)}\Big|^2 G\phi^2 + g(\mathrm{div}(T),\frac{T(x_i\nabla x_i)}{2(t_0-t)})G\phi^2\vol\\
    &-2(t_0-t)\int_M g(\mathrm{div}(T),2T(\nabla \phi) G\phi) \vol\\ &+\int_M |T|^2\Big(\frac{n}{2}-\frac{|x|^2}{4(t_0-t)}\Big)G\phi^2 \vol\\
    =\ &I+II+III+IV+V.
\end{align*}
The goal is now to carefully estimate the right-hand side; we begin with $IV$.
\begin{align*}
    |IV| \leq\ &2(t_0-t)\int_M \Big|g\big((\mathrm{div}(T)-\frac{T(x_i\nabla x_i)}{2(t_0-t)})G^{1/2}\phi,2T(\nabla \phi)G^{1/2}\big)\Big|\vol\\
    &+ \int_M \Big|g\big(T(x_i\nabla x_i),2T(\nabla \phi)G\phi\big)\Big|\vol\\
    \leq\ &\frac{1}{2}|II|+4(t_0-t)\int_M |T|^2|\nabla\phi|^2G\vol+2\int_M |T|^2|g(x_i\nabla x_i,\nabla\phi)|G\phi\vol
\end{align*}
    where we used the triangle inequality in the first line and then Young and Cauchy-Schwartz inequalities. Now recall that $\nabla\phi=0$ in $B(0,1/2)$, and $G$ concentrates at $x=0$ as $t\to t_0$. There are now two cases to consider: if $t_0-t>1/N$, then $G< C N^{n/2}$, whereas if $t_0-t<1/N<1$ then $G |\nabla \phi|< C$, since $G$ is bounded outside $B(0,1/2)$. Hence either way we find
\[|IV|\leq \frac{1}{2}|II|+C N^{n/2}E(0).\]
Next we consider $III$.
\begin{align*}
III =\ &\frac{1}{2(t_0-t)}\int_M |T(x_i\nabla x_i)|^2 G\phi^2\vol -\int_M g(\mathrm{div}(T),T(x_i\nabla x_i))G\phi^2\vol\\
=\ &\frac{1}{2(t_0-t)}\int_M |T(x_i\nabla x_i)|^2 G\phi^2\vol +\int_M g(T,\nabla(T(x_i\nabla x_i)G\phi^2))\vol\\
=\ &\int_M g(T,\nabla(T(x_i\nabla x_i))G\phi^2)\vol+2\int_M g(T(\nabla \phi), T(x_i\nabla x_i))G\phi)\vol
\end{align*}
where we integrated by parts in the second line and again used that $\nabla G=-\frac{x_i\nabla x_i}{2(t_0-t)}G$ in the last line.
We also have
\begin{align}
   \int_M g(T,\nabla(T(x_i\nabla x_i))G\phi^2)\vol 
   &= 
   \int_M g(T,(\nabla T)(x_i\nabla x_i)G\phi^2)\vol+
   \int_M g(T,T(\nabla(x_i\nabla x_i))G\phi^2)\vol
   \label{usingbianchiidentity}\\
   &= 
   \int_M g(T,(\nabla_{x_i\nabla x_i} T+R(\cdot,x_i\nabla x_i))G\phi^2)\vol\nonumber\ +\\
   &\ \ \ \ \int_M g(T,T(\nabla(x_i\nabla x_i))G\phi^2)\vol\nonumber\\
   &=\int_M \big(\frac{1}{2} \nabla_{x_i\nabla x_i} |T|^2+g(T,R(\cdot,x_i\nabla x_i))\big)G\phi^2\vol\nonumber \ +\\
   &\ \ \ \ \int_M g(T,T(\nabla(x_i\nabla x_i))G\phi^2)\vol\nonumber
\end{align}
where we used the Bianchi-type identity (\ref{eq: bianchi identity2}) in the second line and the fact that $T\in \Om^1 \otimes \Om^2_{15}$. Note that, in local coordinates $\nabla x_i=g^{ik}\partial_{x_k}$ (using Einstein's summation convention) and hence in the above expression, \[\nabla(x_i\nabla x_i)=x_ig^{ik}\nabla {\partial_{x_k}}+g^{ik}dx_i\otimes \partial_{x_k}+x_i\partial_{x_j}(g^{ik})dx_j \otimes \partial_{x_k}\]
denotes an endomorphism.
Since $\vol=\sqrt{|g|}dx$, in local coordinates we compute
\begin{align}
    \int_M \nabla_{x_i\nabla x_i} |T|^2 G\phi^2 \vol = &\int_{\R^n} x_ig^{ik}\partial_{x_k}|T|^2G\phi^2\sqrt{|g|} dx \label{secondintegrationbyparts}\\
    = &-\int_{M}g^{ii} |T|^2G\phi^2 \vol+\int_{M}g^{ik} |T|^2(\frac{x_i x_k}{2(t_0-t)})G\phi^2 \vol\nonumber\\  &-\int_{\R^n} |T|^2x_iG\partial_{x_k}(g^{ik}\phi^2\sqrt{|g|}) dx\nonumber
\end{align}
where we now integrated by parts on $\R^n$. Combining all of the above, we have so far
\begin{align}
    I+III+V 
    =&\int_M g(T,g^{ik}dx_i\otimes T( \partial_{x_k})-T)G\phi^2\vol\nonumber\\
    &-\frac{1}{4(t_0-t)}\int_M |T|^2\Big({|x|^2}-{g^{ik}x_i x_k}\Big)G\phi^2 \vol\label{quarticterminx}\\
    &+\frac{1}{2}\int_{M}({n-g^{ii}}) |T|^2G\phi^2 \vol 
    +\int_M g(T,R(\cdot,x_i\nabla x_i))G\phi^2\vol\nonumber\\
    &+\int_M g(T,T(x_ig^{ik}\nabla {\partial_{x_k}}+x_i\partial_{x_j}(g^{ik})dx_j \otimes \partial_{x_k})G\phi^2)\vol\nonumber\\
    &-\frac{1}{2}\int_{\R^n} |T|^2x_iG\partial_{x_k}(g^{ik}\phi^2\sqrt{|g|}) dx 
    +2\int_M g(T(\nabla \phi), T(x_i\nabla x_i))G\phi)\vol.\nonumber
\end{align}
Since we are working in $B(p,1)$ with geodesic normal coordinates we know that $g^{ij}=\delta_{ij}+O(|x|^2)$ and $\Gamma_{ij}^k=O(|x|)$ in a neighbourhood of $x=0$, and hence, using
\[\big|\int_M g(T,R(\cdot,x_i\nabla x_i))G\phi^2\vol\big|\leq C\int_M|T||x|G\phi^2\vol,\]
we get the bound
\begin{equation}
    |I+III+V|\leq CE(0)+\frac{C}{(t_0-t)}\int_M |T|^2|x|^4 G\phi^2\vol+{C}\int_M |T|^2|x|^2 G\phi^2\vol+C\int_M|T||x|G\phi^2\vol.\nonumber
\end{equation}
Note that here we again used the fact that $G$ can only concentrate at $x=0$, but $|\nabla \phi|$ and the $\Gamma_{ij}^k$ vanish at $x=0$. 
We now have the estimate
\begin{align}
    \frac{1}{(t_0-t)}\int_M |T|^2|x|^4 G\phi^2\vol =\ &\frac{1}{(t_0-t)}\int_{|x|^2\leq (t_0-t)\log^2(t_0-t)} |T|^2|x|^4 G\phi^2\vol\nonumber\\
    &+ \frac{1}{(t_0-t)}\int_{|x|^2> (t_0-t)\log^2(t_0-t)} |T|^2|x|^4 G\phi^2\vol\nonumber\\
    \leq\ &{\log^4(t_0-t)}Z(t)+ \frac{\exp\big(-\log^2(t_0-t)/4\big)}{(t_0-t)^{n/2+1}}C E(0)\nonumber\\
    \leq\ &{\log^4(t_0-t)}Z(t)+ C E(0),\nonumber
\end{align}
where we used that $\frac{\exp\big(-\log^2(t_0-t)/4\big)}{(t_0-t)^{n/2+1}}$ is uniformly bounded, for $0<t<t_0$. Analogously, we get 
\begin{equation*}
   {C}\int_M |T|^2|x|^2 G\phi^2\vol
    \leq {C}{\log^2(t_0-t)}Z(t)+ C E(0).
\end{equation*}
As argued above, for $t_0-t>1/N$ we have $G<CN^{n/2}$, so
\[\int_M|T||x|G\phi^2\vol \leq CN^{n/2}\sqrt{E(0)}.\]
On the other hand, if $t_0-t<1/N$, then Young's inequality gives
\begin{align*}
    \int_M|T||x|G\phi^2\vol &\leq  \frac{1}{4 \log^2(t_0-t)}\int_M \frac{|x|^2}{t_0-t}G\phi^2 \vol+\log^2(t_0-t)Z(t)\\
    &\leq \frac{C}{\log^2 N} +\log^2(t_0-t)Z(t).
\end{align*}
    Gathering all terms,
\[\frac{d}{dt}Z(t)\leq -\frac{1}{2}|II|+C(\log^4(t_0-t) +\log^2(t_0-t)) Z(t)+C N^{n/2}(E(0)+\sqrt{E(0)})+\frac{C}{\log^2 N}.\]
    Now, the function $f(t)$ satisfies $f'(t)=\log^4(t_0-t) +\log^2(t_0-t)$, so 
\begin{align*}
    \frac{d}{dt}\Big(e^{-Cf}Z(t)\Big) &= e^{-Cf}\Big(\frac{d}{dt}Z(t)-C(\log^4(t_0-t) +\log^2(t_0-t))\Big) \\
    &\leq C e^{-Cf} \Big( N^{n/2}(E(0)+\sqrt{E(0)})+\frac{1}{\log^2 N}\Big).
    \qedhere
\end{align*}
\end{proof}
\begin{remark}
\label{rem: compare to He-Li}
    A similar approach can be found in \cite{HeLi2021}*{Theorem 3.1}, in the case of almost Hermitian structures, with $H=\rU(n/2)$. We must highlight however two important differences, stemming from what we believe to be a minor overlook of some features in the original proof by Chen-Struwe  \cite{ChenStruwe1989} for the harmonic heat flow of \emph{maps}, as opposed to \emph{tensors}, eventually leading us to a different function $f$ in the monotonicity formula. 
    
    First, the authors integrate by parts in (\ref{firstintegrationbyparts}) with respect to the Euclidean metric, rather than $g$, and this results in an additional term involving derivatives of $g$ (confusingly denoted by $\nabla g_{ij}$ therein). This is indeed the procedure adopted in \cite{ChenStruwe1989}*{Lemma 4.2}, where it is not problematic because there are no covariant derivatives of tensors involved, only partial derivatives of maps. 
    
    Second, our integration by parts in (\ref{secondintegrationbyparts}) gives rise to a term involving $g^{ik}x_ix_j$, which is equal to $|x|^2$ only to zeroth order in the unit geodesic ball, since $g^{ij}=\delta_{ij}+O(|x|^2)$. This yields a term of order $|x|^4$ in (\ref{quarticterminx}), which does not appear in \cite{HeLi2021}. It is the bound on this term that finally requires a different choice of $f$.
\end{remark}

\begin{remark}
\label{remarkongeneralcase}
    We now highlight the key features of the proof that generalise immediately to other structure groups $H\subset \SO(n)$. First,  we need the squared norm of $T\in\Om^1 \otimes \mathfrak{m}\subset \Om^1 \times \Om^2$ to evolve by 
$$\frac{\partial}{\partial t}|T|^2=g(\nabla \mathrm{div}(T),T)$$
    which is used in (\ref{diffZt}).
    Second, we need the Bianchi identity to establish that $$\pi_{\mathfrak{m}}((\nabla_X T)(Y)-(\nabla_Y T)(X))=\pi_{\mathfrak{m}}(R(X,Y))$$
which is used in (\ref{usingbianchiidentity}). Aside from these two ingredients, the rest of the calculations is completely independent of the structure group $H$. From the results in \cites{Dwivedi2019, Loubeau2019} and \cite{Dwivedi2021} we immediately deduce that Theorem \ref{monotonicitytheorem1} also applies to the cases of $H=G_2$ and $H=\Spin(7)$.
\end{remark}

Next we define the functional
\begin{equation}
    \Psi(R) = \int_{t_0-4R^2}^{t_0-R^2}\int_M |T(t)|^2G\phi^2 \vol
    = \int_{t_0-4R^2}^{t_0-R^2}\int_{\R^n} |T(t)|^2G\phi^2\sqrt{|g|} dx.
\end{equation}
and a similar argument as in \cite{HeLi2021}*{Theorem 3.2} now yields:
\begin{theorem}
\label{monotonicitytheorem2}
    For any $N>1$ and $R_1,R_2$ such that 
    $$
    0<R_2\leq R_1<\mathrm{min}\{\sqrt{t_0}/2,1\},
    $$
    the following monotonicity formula holds:
\begin{equation}
    \Psi(R_2)
    \leq C_0e^{C(\tilde{f}(t_2)-\tilde{f}(t_1))}\Psi(R_1) +C\Big(N^{4}(E(0)+\sqrt{E(0)} +\frac{1}{\log^2N})\Big)(R_1-R_2),
\end{equation}
    where $C_0,C>0$ are constants depending only on $(M,g)$ and
\[\tilde{f}(R)
=R^2(-26+52R^2\log R-52\log^2R+64\log^3R-32\log^4R).\]
\end{theorem}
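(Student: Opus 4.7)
My plan is to deduce this integrated monotonicity from the pointwise-in-time formula of Theorem \ref{monotonicitytheorem1} by averaging in the scale parameter $R$. First, I would rewrite $\Psi$ in terms of $Z$: since $Z(t)=(t_0-t)\int_M|T|^2G\phi^2\vol$, the substitution $t=t_0-r^2$ gives
\begin{equation*}
\Psi(R)=\int_{t_0-4R^2}^{t_0-R^2}\frac{Z(t)}{t_0-t}\,dt
=2\int_R^{2R}\frac{Z(t_0-r^2)}{r}\,dr,
\end{equation*}
and the elementary bounds $R^2\leq t_0-t\leq 4R^2$ make $\Psi(R)$ comparable, up to universal constants, to the unweighted time integral $\int Z(t)\,dt$ over the same window.

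Next, for each pair $(s,t)$ with $s\in[t_0-4R_1^2,t_0-R_1^2]$, $t\in[t_0-4R_2^2,t_0-R_2^2]$ and $s<t$, Theorem \ref{monotonicitytheorem1} yields
\begin{equation*}
Z(t)\leq e^{C(f(t)-f(s))}Z(s)+C\bigl(N^{4}(E(0)+\sqrt{E(0)})+\log^{-2}N\bigr)(t-s).
\end{equation*}
I would integrate this in $s$ over the $R_1$-window (of length $3R_1^2$) and in $t$ over the $R_2$-window (of length $3R_2^2$), then divide by $3R_1^2$ and convert the resulting double integrals back to $\Psi(R_1)$ and $\Psi(R_2)$ via the first step. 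This produces a bound of the claimed shape, with the constant $C_0$ absorbing the ratio of weights $1/R_2^2$ versus $1/(4R_1^2)$ from the conversion. The explicit polynomial $\tilde f(R)$ is then obtained by direct substitution: under $t=t_0-r^2$ one has $\log(t_0-t)=2\log r$, so $f(t_0-r^2)=r^2\{-26+52\log r-52\log^2 r+32\log^3 r-16\log^4 r\}$, and evaluating (or integrating) this at the relevant scales produces $\tilde f(R)$, with all numerical constants absorbed into $C$.

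The main obstacle will be two-fold. First, when $R_1\leq 2R_2$ the time-windows $[t_0-4R_i^2,t_0-R_i^2]$ overlap, so the hypothesis $s<t$ of Theorem \ref{monotonicitytheorem1} fails on a sub-region of the product; I would handle this by inserting the intermediate scale $R^\ast=2R_2$ and applying the monotonicity in two steps (first from the $R_1$-window up to $t_0-(R^\ast)^2$, then from there to the $R_2$-window), or alternatively by noting that the overlap contributes an $O(R_1-R_2)$ quantity already absorbed by the additive error. Second, one must ensure that the error stays \emph{linear} in $R_1-R_2$ rather than accruing an extra factor of $R$; the key is to keep the estimate in the $dr/r$ form of $\Psi$, where $\int_R^{2R}r^{-1}\,dr=\log 2$ is scale-invariant, so no surplus power of $R$ enters the additive term.
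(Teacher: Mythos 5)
The paper's proof is a single, clean change of variables: set $\alpha = R_2^2/R_1^2$ and substitute $t = \alpha\tilde t + (1-\alpha)t_0$. This maps the $R_1$-window $[t_0-4R_1^2,\,t_0-R_1^2]$ bijectively onto the $R_2$-window, preserves the measure $\frac{dt}{t_0-t} = \frac{d\tilde t}{t_0-\tilde t}$ exactly, and automatically satisfies $\tilde t \leq t$ for every $\tilde t$ in the domain (since $t - \tilde t = (1-\alpha)(t_0-\tilde t) \geq 0$). Theorem~\ref{monotonicitytheorem1} then applies pointwise at each pair $(\tilde t, t)$ with no case analysis, and the error integrand $\frac{t-\tilde t}{t_0-\tilde t} = 1-\alpha$ is a constant, so the error integrates to $3R_1^2(1-\alpha) \leq 6(R_1-R_2)$. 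Your double-integration-and-averaging over the product of windows is genuinely different, and it has a gap that your two proposed fixes do not close.

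The gap is in the overlapping case $R_1 \leq 2R_2$. For your averaging-in-$s$ step with divisor $3R_1^2$ to be legitimate, you need $s < t$ for essentially every $s$ in the $R_1$-window, uniformly in $t$. But take $t = t_0 - 4R_2^2$, the left endpoint of the $R_2$-window. Then $\{s \in [t_0-4R_1^2,\,t_0-R_1^2] : s < t\}$ has length only $4(R_1^2 - R_2^2)$, which tends to $0$ as $R_1 \to R_2$ while the divisor $3R_1^2$ stays bounded below. So the average is taken over an asymptotically vanishing portion of the window and the resulting constant blows up; one does not get a uniform $C_0$. Your first fix does not help: you propose the intermediate scale $R^\ast = 2R_2$, but when $R_1 \leq 2R_2$ we have $R^\ast \geq R_1$, so $R^\ast$ is not between $R_2$ and $R_1$ and there is no monotone two-step comparison $\Psi(R_2) \leq \cdots \Psi(R^\ast) \leq \cdots \Psi(R_1)$. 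Your second fix — that "the overlap contributes $O(R_1-R_2)$" — is not accurate either: the measure of the overlapping region $[t_0-4R_2^2,\,t_0-R_1^2]$ is $4R_2^2 - R_1^2$, which is of order $R^2$, not $R_1-R_2$, when $R_1 \approx R_2$. The linear reparametrization sidesteps all of this because it never compares times on the wrong side of one another and never requires averaging. I'd also flag that your back-substitution $f(t_0 - r^2) = r^2\{-26+52\log r - 52\log^2 r + 32\log^3 r - 16\log^4 r\}$ does not reproduce the stated $\tilde f(R)$ (the paper's coefficients on the $\log^3$ and $\log^4$ terms are twice yours, reflecting that $\tilde f$ is extracted as a bound for $f(\alpha\tilde t + (1-\alpha)t_0) - f(\tilde t)$ over the window, not a pointwise identity at $r=R$).
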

\begin{proof}
    We set $\al=R^2_2/R^2_1\leq 1$ and $\tilde{t}\leq t=\al \tilde{t}+(1-\al)t_0$. Using Theorem \ref{monotonicitytheorem1}, we compute
\begin{align*}
    \Psi(R_2) &= \int^{t_0-R^2_2}_{t_0-R^2_2} \frac{Z(t)}{t_0-t} dt = \int^{t_0-R^2_1}_{t_0-R^2_1} \frac{Z(t)}{t_0-\tilde{t}} d\tilde{t} \\
    &\leq \int^{t_0-R^2_1}_{t_0-R^2_1} e^{C(f(t)-f(\tilde{t}))}\frac{Z(t)}{t_0-\tilde{t}} +C\Big(N^{4}(E(0)+\sqrt{E(0)}+\frac{1}{\log^2N})\Big)\frac{t-\tilde{t}}{t_0-\tilde{t}} d\tilde{t} \\
    &\leq C_0 e^{C(\tilde{f}(R_2)-\tilde{f}(R_1))}\Psi(R_1)+C\Big(N^{4}(E(0)+\sqrt{E(0)}+\frac{1}{\log^2N})\Big)(R_1-R_2)
\end{align*}
In the last line we used the fact the function $f(\alpha\tilde{t}+(1-\alpha)t_0)-f(\tilde{t})$ is bounded above and decreasing as $\tilde{t}\to t_0$.
\end{proof}
The above monotonicity result is crucial in the proof of long-time existence of the harmonic QK flow. As with the almost-monotonicity formula, the argument in \cite{HeLi2021} can also be readily extended to our setup when $H=\Sp(2)\Sp(1)$, leading to long-time existence given small initial torsion. Fortunately, we need not carry out that analysis in detail, since it follows from the  theory of harmonic flows of  $H$-structures, cf. \cite{Fadel2022}. In view of Remark \ref{remarkongeneralcase}, such behaviour was indeed to be expected, even if some passages might have to be modified in comparison to \cite{HeLi2021}, due to certain subtle estimates involving the parabolic cylinder cf. \cite{Fadel2022}*{Theorem 2.10}. In any event, specialising the general theory to our present situation, we have:
\begin{theorem}
    Let $(M^8,g,\Om_0)$ denote a Riemannian manifold with an $\Sp(2)\Sp(1)$-structure such that $|\nabla \Om_0|<C$, for some constant $C>0$. Then there exists $\varepsilon>0$, depending only on $C$ and $g$, such that, if $E(\Om_0)<\varepsilon$, then the harmonic QK flow  (\ref{harmonicQKflowequation}) exists for all time and converges smoothly to a torsion-free QK structure.
\end{theorem}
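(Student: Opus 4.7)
The plan is to argue by contradiction against the blow-up criterion $\sup_M|T_t|\to\infty$ at $T_{max}$, using the refined monotonicity formula of Theorem \ref{monotonicitytheorem2} together with the $\epsilon$-regularity Theorem \ref{epsilonregularityGraysonHamilton} to forbid torsion concentration, and then to extract a limit as $t\to\infty$ from the global energy identity \eqref{evolutionoftorsionsquareequation}. The bound $|\nabla\Om_0|<C$ (equivalently $|T_0|_{L^\infty}<C'$) and the assumption $E(\Om_0)<\varepsilon$ play complementary roles: the first provides the initial regularity needed to run Shi-type estimates locally in time, while the second feeds into the localised monotonicity to prevent energy concentration.

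First I would show long-time existence. Fix a geodesic ball $B(p,r_0)\subset M$ with $r_0$ less than the injectivity radius of $(M,g)$, pick a cutoff $\phi$ as in \S\ref{almostmonotonicitysubsection}, and, for any $p\in M$ and any $t_0\leq T_{max}$, apply Theorem \ref{monotonicitytheorem2} between the scales $R_1=\min\{\sqrt{t_0}/2,1\}/2$ and an arbitrary $R_2\in(0,R_1]$. Since
\[
\Psi(R_1)\leq \int_0^{T_{max}}\int_M|T|^2 G\phi^2\,\vol\cdot \sup_{t,x} G\phi^2(t,x;R_1)
\leq C(R_1,M,g)\,E(\Om_0),
\]
choosing $N$ large (to kill the $1/\log^2N$ term) and then $\varepsilon>0$ small enough gives $\Psi(R_2)<\epsilon$ uniformly, where $\epsilon$ is the constant provided by Theorem \ref{epsilonregularityGraysonHamilton}. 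The $\epsilon$-regularity then translates this smallness of $\Psi$ (and hence of the localised $\Theta$-functional up to a parabolic window) into a pointwise bound $|T(x,t)|\leq C_1/r$ on a parabolic neighbourhood of $(p,t_0)$, independent of $t_0<T_{max}$. Covering $M$ by finitely many such balls yields $\sup_M|T_t|\leq K$ uniformly on $[0,T_{max})$, contradicting the blow-up criterion of \cite{Loubeau2019}; hence $T_{max}=\infty$.

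For the convergence as $t\to\infty$, I would first upgrade the uniform $L^\infty$ bound on $T$ to uniform bounds on all covariant derivatives $|\nabla^k T|_{L^\infty}$, using the Shi-type parabolic estimates for harmonic $H$-flows (which only need $|T_0|$ bounded and $|T|$ uniformly bounded along the flow, cf. \cite{Dwivedi2021}*{Proposition 2.16}). The global identity
\[
E(\Om_0)-E(\Om_t)=2\int_0^t\!\!\int_M|\mathrm{div}\,T|^2\,\vol\,ds
\]
from Corollary \ref{evolutionoftorsionsquare} then forces $\int_0^\infty\!\!\int_M|\mathrm{div}\,T|^2\,\vol\,ds<\infty$, so by the Shi-type bounds and Arzelà--Ascoli there is a sequence $t_k\to\infty$ along which $\Om_{t_k}\to\Om_\infty$ smoothly, with $\mathrm{div}\,T_\infty=0$; that is, $\Om_\infty$ is harmonic. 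Finally, to turn harmonicity into torsion-freeness under the small-energy assumption, I would argue that the Bianchi-type identity \eqref{eq: bianchi identity} together with harmonicity gives an elliptic inequality of Bochner type for $|T_\infty|^2$ of the form
\[
\tfrac{1}{2}\Delta|T_\infty|^2 \geq |\nabla T_\infty|^2 - C_R|T_\infty|^2 - C_T|T_\infty|^3,
\]
where $C_R$ depends on $g$ and $C_T$ on the algebraic quadratic terms coming from the diamond/triple-contraction pairing of Lemma \ref{Lm215andLm215pairingLemma}; integrating against $1$ and invoking the $L^2$-smallness of $T_\infty$ (inherited from $E(\Om_\infty)\leq E(\Om_0)<\varepsilon$) closes the estimate and forces $T_\infty\equiv 0$. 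Smooth subsequential convergence then upgrades to smooth convergence of the full flow by a standard \L ojasiewicz-type/uniqueness-of-limit argument applied to the analytic energy $E$.

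The main obstacle I anticipate is the last step, namely turning harmonicity of $\Om_\infty$ into torsion-freeness and promoting subsequential to full convergence. The $\epsilon$-regularity/monotonicity machinery controls concentration but does not by itself rule out harmonic non-torsion-free limits (indeed \S\ref{examplesofharmonicQKsection} constructs such structures). The gap theorem must therefore genuinely use the smallness of $\varepsilon$ and the algebraic structure of the Bianchi identity \eqref{eq: bianchi identity} combined with Kato-type inequalities for $T\in\Lm^1_8\otimes\Lm^2_{15}$, and one must be careful that the constants $C_R,C_T$ above are dominated by a Poincaré/Sobolev constant times $\varepsilon^{1/2}$. This is precisely the point at which the abstract framework of \cite{Fadel2022} is invoked in the statement, and specialising it to $H=\Sp(2)\Sp(1)$ reduces to verifying the two representation-theoretic inputs already isolated in Remark \ref{remarkongeneralcase}.
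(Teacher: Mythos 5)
The paper offers no proof of this theorem: it is stated as a specialisation of the abstract theory in \cite{Fadel2022}, and the only justification given is the observation in Remark \ref{remarkongeneralcase} that the two representation-theoretic ingredients (evolution of $|T|^2$ and the Bianchi identity) hold for $H=\Sp(2)\Sp(1)$. So your attempt, which tries to reconstruct a self-contained argument, cannot be compared against an in-paper proof; it must stand on its own.

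Your global structure---monotonicity plus $\epsilon$-regularity for long-time existence, then energy dissipation for a subsequential harmonic limit, then a gap theorem, then a \L ojasiewicz-type argument---is the standard blueprint and is presumably close to what \cite{Fadel2022} does. Two technical points in the first part are glossed over but almost certainly repairable: (i) the $\epsilon$-regularity Theorem \ref{epsilonregularityGraysonHamilton} is stated in terms of $\Theta$, which uses the fundamental solution $u$, whereas your smallness estimate is for $\Psi$, which uses the truncated Euclidean kernel $G\phi^2$; one needs a comparison between $u$ and $G\phi^2$ on the parabolic cylinder, which is precisely the kind of ``subtle estimates involving the parabolic cylinder'' that the paper points to in \cite{Fadel2022}*{Theorem 2.10}. (ii) The Shi-type estimates \cite{Dwivedi2021}*{Proposition 2.16} do require the uniform $|T|$ bound you establish, so that part is fine once (i) is sorted.

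The genuine gap is the step you yourself flag: promoting ``$\mathrm{div}\,T_\infty=0$'' to ``$T_\infty=0$''. As written, the Bochner argument does not close. Integrating $\tfrac12\Delta|T_\infty|^2\geq|\nabla T_\infty|^2-C_R|T_\infty|^2-C_T|T_\infty|^3$ over a compact $M$ yields $\int|\nabla T_\infty|^2\leq C_R\int|T_\infty|^2+C_T\int|T_\infty|^3$; the curvature constant $C_R$ is fixed and of order $1$ (it is not controlled by $\varepsilon$, and cannot be since it depends only on $g$), so there is no regime in which ``$L^2$-smallness of $T_\infty$'' makes the left side dominate. Your suggested fix---that $C_R,C_T$ should be dominated by a Poincar\'e constant times $\varepsilon^{1/2}$---has the quantifiers backwards: $\varepsilon$ is at our disposal, $C_R$ is not. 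One can certainly convert the $L^2$-smallness into $L^\infty$-smallness of $T_\infty$ by interpolating against the uniform $|\nabla T_\infty|$ bound from Shi, but even then the integrated inequality is perfectly consistent with a nonzero, small, harmonic $T_\infty$ (and indeed, the conformally parallel examples of \S\ref{examplesofharmonicQKsection} show that non-torsion-free critical points exist once the global geometry is not rigid). The mechanism that actually forces $T_\infty\equiv 0$ in the small-energy regime must use the special structure of the zero-energy minimum---typically through a \L ojasiewicz--Simon inequality near $E=0$, or through the convexity estimate of Proposition \ref{convexitylemma} upgraded to a genuine basin-of-attraction argument---not an interior Bochner/Kato estimate alone. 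Until that is supplied, your sketch proves long-time existence and subsequential convergence to a \emph{harmonic} $\Sp(2)\Sp(1)$-structure, which is strictly weaker than the stated conclusion.
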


We conclude this section with the following convexity result for the energy functional.
This can in fact be used to give a direct proof that sufficiently small $|T(\Om_0)|$ guarantees  long-time existence and convergence to a harmonic QK structure, cf. \cite{Dwivedi2019}*{Theorem 5.13} and \cite{Dwivedi2021}*{Theorem 5.9}. However, the recent results in \cite{Fadel2022} supersede this hypothesis, by only requiring that $E(\Om_0)$ be sufficiently small. In any event, such convexity result will likely be useful in future studies of the stability profile of  critical points.

\begin{proposition}
\label{convexitylemma}
    Along a solution of the QK harmonic flow (\ref{harmonicQKflowequation}) on $(M^8,g)$, we have
\begin{equation}
    \frac{d^2}{dt^2} E(\Om(t)) \geq \int_M (\Lambda -|T|^2)|\mathrm{div}\ T|^2\ \vol,
\end{equation}
    where $\Lambda$ denotes the first non-zero eigenvalue of the rough Laplacian of $g$ on $2$-forms.
\end{proposition}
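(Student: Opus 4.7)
My plan is to unfold the second time derivative of $E$ using the evolution of torsion from Proposition~\ref{evolutionoftorsionproposition}, integrate by parts twice to produce a positive $|\nabla\,\mathrm{div}\,T|^{2}$ term plus a cross term involving the quadratic piece of $\partial_{t}T$, and then control the cross term by Young's inequality while bounding the good term from below via the spectral gap $\Lambda$. Concretely, since $g$ is invariant under the flow, Corollary~\ref{evolutionoftorsionsquare} gives $\tfrac{d}{dt}E=-\int_{M}|\omega|^{2}\,\vol$ with $\omega:=\mathrm{div}\,T$, and differentiating once more yields $\tfrac{d^{2}}{dt^{2}}E=-2\int_{M}g(\omega,\partial_{t}\omega)\,\vol$. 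I would commute $\mathrm{div}$ past $\partial_{t}$ (legitimate as $\nabla$ is time-independent) and substitute the torsion evolution $\partial_{t}T(X)=\nabla_{X}\omega+Q(X)$ from Proposition~\ref{evolutionoftorsionproposition}, where
\begin{equation*}
Q(X)=-\tfrac{1}{32}\bigl((\omega\diamond\Om)\ip_{3}(T(X)\diamond\Om)-(T(X)\diamond\Om)\ip_{3}(\omega\diamond\Om)\bigr).
\end{equation*}
Contracting with a local orthonormal frame in normal coordinates then gives $\partial_{t}\omega=-\nabla^{*}\nabla\,\omega+\mathrm{div}(Q)$.

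Substituting this and integrating by parts twice — once to turn $\int g(\omega,\nabla^{*}\nabla\omega)$ into $\int|\nabla\omega|^{2}$, and once to turn $\int g(\omega,\mathrm{div}(Q))$ into $-\int g(\nabla\omega,Q)$ — produces
\begin{equation*}
\tfrac{d^{2}}{dt^{2}}E=2\int_{M}|\nabla\omega|^{2}\,\vol+2\int_{M}g(\nabla\omega,Q)\,\vol.
\end{equation*}
Young's inequality bounds the cross term from below by $-\int|\nabla\omega|^{2}-\int|Q|^{2}$, so $\tfrac{d^{2}}{dt^{2}}E\geq\int|\nabla\omega|^{2}\,\vol-\int|Q|^{2}\,\vol$. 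The required pointwise estimate $|Q|^{2}\leq|T|^{2}|\omega|^{2}$ will follow from Lemma~\ref{Lm215andLm215pairingLemma} together with Remark~\ref{symmetricspaceRem}: the antisymmetrised pairing defining $Q(X)$ is, up to normalisation, the $\mathfrak{so}(8)$-commutator $[\omega,T(X)]$ of elements of $\Lm^{2}_{15}$, so the bound reduces to the standard Frobenius estimate for commutators of skew-symmetric matrices; moreover the symmetric-space identity $[\Lm^{2}_{15},\Lm^{2}_{15}]\subset\mathfrak{sp}(2)\oplus\mathfrak{sp}(1)$ confirms that $Q$ has no $\Lm^{2}_{15}$-component, consistent with the second assertion of Proposition~\ref{evolutionoftorsionproposition}.

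To close the argument I would verify that $\omega=\mathrm{div}\,T$ is $L^{2}$-orthogonal to the kernel of the rough Laplacian on $\Lm^{2}$, so that the spectral gap $\Lambda$ controls $\omega$. This is immediate: for any parallel $2$-form $\eta$, integration by parts gives $\int_{M}g(\omega,\eta)\,\vol=-\int_{M}g(T,\nabla\eta)\,\vol=0$. The Poincar\'e inequality then yields $\int|\nabla\omega|^{2}\geq\Lambda\int|\omega|^{2}$, and combining with the previous step delivers
\begin{equation*}
\tfrac{d^{2}}{dt^{2}}E\geq\Lambda\int_{M}|\omega|^{2}\,\vol-\int_{M}|T|^{2}|\omega|^{2}\,\vol=\int_{M}(\Lambda-|T|^{2})|\mathrm{div}\,T|^{2}\,\vol.
\end{equation*}
The hard part will be tracking the precise normalisation constants: the factor $\tfrac{1}{32}$ in $Q$, the factor $32$ in Lemma~\ref{Lm215andLm215pairingLemma}, and the sharp commutator constant for $\mathfrak{so}(8)$ acting on $\Lm^{2}_{15}$ must align to produce the constant $1$ in front of $|T|^{2}$; should a multiplicative slack $C>1$ remain, it can be absorbed by slightly redefining $\Lambda$ or re-balancing the Young inequality with a tunable parameter.
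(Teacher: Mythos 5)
Your proposal follows essentially the same route as the paper: differentiate $E$ twice using Corollary~\ref{evolutionoftorsionsquare}, commute $\mathrm{div}$ with $\partial_t$, substitute the torsion evolution from Proposition~\ref{evolutionoftorsionproposition}, integrate by parts to reach
$\tfrac{d^2}{dt^2}E = 2\int|\nabla\omega|^2 + 2\int g(\nabla\omega,Q)$,
apply Young's inequality, and close with the Poincar\'e inequality after noting $\omega=\mathrm{div}\,T$ is $L^2$-orthogonal to parallel $2$-forms. (Your intermediate step via $\partial_t\omega=-\nabla^*\nabla\omega+\mathrm{div}(Q)$ is a slight detour but lands on the same identity.)

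The one place where your argument and the paper's genuinely diverge is the control of the quadratic term, and this is not a cosmetic difference. You propose to bound $|Q|^2\le|T|^2|\omega|^2$ \emph{pointwise} by viewing $Q$ as a constant multiple of the commutator $[W,S_X]$ and invoking ``the standard Frobenius estimate for commutators of skew-symmetric matrices.'' But that estimate carries a nontrivial constant ($\|[A,B]\|_F\le\sqrt{2}\,\|A\|_F\|B\|_F$ is the general sharp form), and combined with the factor $2$ you computed in $Q(X)=2[W,S_X]$ and the $\tfrac{1}{2}$ in passing between Frobenius and $2$-form inner products, nothing forces the net constant to come out as $1$. The paper avoids this entirely: it applies Cauchy--Schwarz not to the antisymmetrised commutator but to the raw contraction $\sum_k T_{m;ik}\omega_{jk}$, i.e.\ $\|S_m W^{\mathrm T}\|_F\le\|S_m\|_F\|W\|_F$ (submultiplicativity of the Frobenius norm for a single matrix product, which has constant exactly $1$). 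That is the device that makes the coefficient of $|T|^2|\omega|^2$ come out to be $1$. Going through the commutator, as you do, loses a factor and you cannot recover the stated constant.

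Your proposed fallbacks do not repair this. ``Redefining $\Lambda$'' is not available: $\Lambda$ is the first nonzero eigenvalue of the rough Laplacian on $2$-forms of the \emph{fixed} background metric $g$, a geometric invariant appearing in the statement, not a free parameter. And ``re-balancing Young with a tunable parameter'' only shifts weight between the $|\nabla\omega|^2$ and $|Q|^2$ terms: if you use parameter $\epsilon$ you obtain $(2-\epsilon)\int|\nabla\omega|^2 - \tfrac{C}{\epsilon}\int|T|^2|\omega|^2$, and to keep the first coefficient at $1$ you must take $\epsilon=1$, which leaves the coefficient of $|T|^2|\omega|^2$ equal to $C$, not $1$. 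To match the proposition verbatim you need $C\le 1$, and the clean way to get it is to follow the paper: pass to the non-antisymmetrised index form via Lemma~\ref{Lm215andLm215pairingLemma} and apply Cauchy--Schwarz there before Young, rather than bounding the commutator $|Q|$ directly.
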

\begin{proof}
    From (\ref{evolutionoftorsionsquareequation}), we have
\begin{equation*}
    \frac{d^2}{dt^2} E(\Om(t)) = -2 \int_M g(\frac{d}{dt}\mathrm{div}\ T,\mathrm{div}\ T)\ \vol.
\end{equation*}
    Since the divergence operator only depends on the metric it commutes with $\frac{d}{dt}$, hence integrating by parts and using (\ref{evolutionoftorsionequation}) we have 
\begin{align*}
    \frac{d^2}{dt^2} E(\Om(t))
    =\ &2 \int_M |\nabla(\mathrm{div}\ T)|^2\ \vol\ +\\ &\frac{1}{16}\int_M g(
    (T\diamond \Om ) \ip_3
    (\mathrm{div}\ T \diamond \Om)-(\mathrm{div}\ T \diamond \Om)\ip_3 (T\diamond \Om ),\nabla(\mathrm{div}\ T))\ \vol.
\end{align*}
Using (\ref{Lm215andLm215pairing}) together with Young's inequality,
\begin{align*}
    \int_M g((T\diamond \Om ) \ip_3
    (\mathrm{div}\ T \diamond \Om),\nabla(\mathrm{div}\ T))\ \vol &=
    16\sum_{i,j,k,m}\int_M T_{m;ik}(\mathrm{div}\ T)_{jk} \nabla_m(\mathrm{div}\ T)_{ij}\ \vol\\
    &\geq -8 \int_M |T|^2|\mathrm{div}\ T|^2+|\nabla(\mathrm{div}\ T)|^2\ \vol.
\end{align*}
    The same argument applies to the term involving $(\mathrm{div}\ T \diamond \Om)\ip_3 (T\diamond \Om )=- (T \diamond \Om)\ip_3 (\mathrm{div}\ T\diamond \Om )$. So combining the above we have 
\begin{align*}
    \frac{d^2}{dt^2} E(\Om(t))
    &\geq\ 2 \int_M |\nabla(\mathrm{div}\ T)|^2\ \vol-\int_M |\mathrm{div}\ T|^2|T|^2+|\nabla(\mathrm{div}\ T)|^2\ \vol\\
    &=\int_M |\nabla(\mathrm{div}\ T)|^2 - |\mathrm{div}\ T|^2|T|^2\ \vol\\
    &\geq \int_M (\Lambda -|T|^2)|\mathrm{div}\ T|^2\ \vol.
\end{align*}
    On a compact manifold, the kernel of the rough Laplacian consists of parallel $2$-forms, so indeed it is orthogonal to $\mathrm{div}\ T$.
\end{proof}
Note that Lemma \ref{convexitylemma} does differ depending on the $H$-structure. For instance, in the $\Spin(7)$-case we have
\begin{equation*}
    \frac{d^2}{dt^2} E(\Phi(t)) \geq \int_M (\Lambda -3|T|^2)|\mathrm{div}\ T|^2\ \vol,
\end{equation*}
see \cite{Dwivedi2021}*{Lemma 5.7}. We refer the reader to \cite{Dwivedi2019}*{Lemma 5.11} for the $\rG_2$ case.

\subsection{Singularities of the flow}
\label{sec: singularityformation}

In this section we investigate the formation of singularities along the harmonic QK flow. Let us consider a solution $\{\Om(t)\}$ to (\ref{harmonicQKflowequation}) defined for $t\in [0,T_{max})$, and define the \emph{singular set} $S$ of the flow by
\begin{equation}
\label{eq: def sing S}
    S=\{x\in M^8\ |\ \Theta_{x,\tau}(\Om(\tau-\rho^2))\geq \epsilon,
    \;\forall 
    \rho \in [0,\overline{\rho})\},
\end{equation}
where $\epsilon$ and $\overline{\rho}$ are as in Theorem \ref{epsilonregularityGraysonHamilton}. This wording is justified by the next result, in the same vein as   \cite{GraysonHamilton1996}*{Theorem 4.3}.
\begin{theorem}
    Let $\{\Om(t)\}_{t\in[0,T_{max})}$ denote the maximal smooth solution to (\ref{harmonicQKflowequation}) starting at $\Om(0)=\Om_0$ with $T_{max} < \infty$. As $t\to T_{max}$, $\Om(t)$ converges smoothly to a QK $4$-form $\Om_{T_{max}}$, away from the closed set $S$. Moreover, $S$ has finite $6$-dimensional Hausdorff measure and satisfies
\[\mathcal{H}^6(S)\leq C E_0\]
    where $C\geq0$ is a  constant depending only on $g$.
\end{theorem}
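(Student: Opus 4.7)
The plan is to follow the standard Grayson--Hamilton template for the singular-set analysis of harmonic flows, adapting the arguments of \cite{Dwivedi2019,Dwivedi2021} to our QK setting. I would split the proof into three stages: closedness of $S$ together with smooth convergence on $M\setminus S$, an energy lower bound at singular points, and a Vitali covering estimate for the Hausdorff measure.

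\textbf{Stage 1 (topology of $S$ and smooth convergence off $S$).} First I would observe that $S$ is closed: for each fixed $\rho \in (0,\overline{\rho})$ the map $x \mapsto \Theta_{(x,T_{max})}(\Om(T_{max} - \rho^2))$ is continuous via the smooth dependence of the backward heat kernel on its pole, so
\[ S = \bigcap_{\rho \in (0,\overline{\rho})} \{ x \in M : \Theta_{(x,T_{max})}(\Om(T_{max} - \rho^2)) \geq \epsilon \} \]
is an intersection of closed sets. For $x_0 \notin S$, I would pick $\rho \in (0,\overline{\rho})$ with $\Theta_{(x_0,T_{max})}(\Om(T_{max} - \rho^2)) < \epsilon$ and apply the $\epsilon$-regularity Theorem \ref{epsilonregularityGraysonHamilton} to get $|T| \leq C/r$ on a parabolic cylinder $B(x_0, r) \times [T_{max} - r^2, T_{max})$. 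Combined with the Shi-type estimates for harmonic $H$-flows already invoked in the compactness Theorem \ref{thm: compactness}, this would yield uniform bounds on all $|\nabla^k T|$ on a slightly smaller cylinder. Integrating the evolution equation $\partial_t \Om = (\mathrm{div}\, T) \diamond \Om$ of \eqref{harmonicQKflowequation}, the family $\{\Om(t)\}$ is then Cauchy in $C^\infty(B(x_0, r/2))$ as $t \to T_{max}$ and admits a smooth limit $\Om_{T_{max}}$; the algebraic $\Sp(2)\Sp(1)$-stabiliser condition is preserved in the limit by Proposition \ref{prop: metricfromOmproposition}.

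\textbf{Stage 2 (density estimate at singular points).} The core quantitative input is that, for every $x \in S$ and every sufficiently small $\rho$,
\[ \int_{B(x,\rho)} |T(y, T_{max} - \rho^2)|^2 \, dV(y) \geq c \, \epsilon \, \rho^6, \]
for a constant $c>0$ depending only on $(M,g)$. To obtain it I would insert the pointwise backward heat-kernel bound $u_{(x,T_{max})}(y, T_{max} - \rho^2) \leq C \rho^{-8}$ into the defining integral of $\Theta_{(x,T_{max})}$, split the spatial domain into $B(x,\rho)$ and its complement, and absorb the exterior contribution using the Gaussian decay of $u$ together with the monotone energy bound $E(\Om(t)) \leq E_0$ inherited from Corollary \ref{evolutionoftorsionsquare}.

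\textbf{Stage 3 (Vitali covering).} Finally I would extract from $S$ a maximal $\rho$-separated family $\{x_i\}_{i=1}^{N_\rho}$, so that the balls $\{B(x_i, \rho/2)\}$ are pairwise disjoint while $\{B(x_i, \rho)\}$ still covers $S$. Summing the Stage 2 estimate over $i$ and using $E(\Om(T_{max} - \rho^2)) \leq E_0$ gives
\[ N_\rho \cdot c \, \epsilon \, \rho^6 \leq 2 \, E_0, \]
whence $\sum_i \rho^6 \leq 2 E_0 / (c\epsilon)$, and the definition of $6$-dimensional Hausdorff measure yields $\mathcal{H}^6(S) \leq C \, E_0$ with $C = 2/(c\epsilon)$ upon letting $\rho \to 0$.

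The main technical obstacle lies in Stage 1: $\epsilon$-regularity only controls $|T|$, yet smooth convergence of $\Om(t)$ requires uniform bounds on all covariant derivatives. Establishing these rests on the Shi-type parabolic estimates for the harmonic QK flow, which themselves depend on the Bianchi-type identity \eqref{eq: bianchi identity} and the $\Sp(2)\Sp(1)$-equivariance of $T$ rather than on any ad hoc coordinate computation. Once those derivative bounds are in place, Stages 2 and 3 are essentially measure-theoretic and should follow the template of \cite{Dwivedi2021} with only notational changes.
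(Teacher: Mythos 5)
Your Stage 1 is consistent with how the paper (and its references) handle closedness of $S$ and smooth convergence away from it; the paper simply defers this to \cite{GraysonHamilton1996} and \cite{Dwivedi2021}*{Theorem D}. The genuine issue is in Stage 2, and it is a real gap rather than a stylistic difference.

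The density estimate $\int_{B(x,\rho)} |T(T_{max}-\rho^2)|^2\,\vol \geq c\,\epsilon\,\rho^6$ does \emph{not} follow from $\Theta_{(x,T_{max})}(\Om(T_{max}-\rho^2))\geq\epsilon$ the way you describe. The Gaussian weight on $\partial B(x,\rho)$ is only $e^{-1/4}$ times its maximum, so the exterior contribution cannot be absorbed: writing $u\lesssim\rho^{-8}e^{-d^2/4\rho^2}$, one gets
\[
\rho^2\int_{B(x,\rho)^c} u\,|T|^2\,\vol \;\lesssim\; \rho^{-6}\, e^{-1/4}\,E_0,
\]
which blows up as $\rho\to 0$, dwarfing $\epsilon$. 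Concretely, if the torsion concentrates on the annulus $B(x,2\rho)\setminus B(x,\rho)$, then $\Theta\geq\epsilon$ can hold while $\int_{B(x,\rho)}|T|^2=0$. Enlarging to $B(x,K\rho)$ does not help: making the tail smaller than $\epsilon$ forces $K^2\gtrsim\log(1/\rho)$, so the Vitali sum $\sum (K\rho)^6\sim N_\rho\,\rho^6\,(\log(1/\rho))^3$ carries a divergent logarithmic factor, and the resulting bound on $\mathcal H^6(S)$ collapses.

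The paper sidesteps this precisely by \emph{not} proving a pointwise density estimate. Following Grayson--Hamilton, one first extracts, via \cite{GraysonHamilton1996}*{Theorem 4.2}, a subset $S'\subset S$ with $\mathcal H^6(S')\geq\tfrac12\mathcal H^6(S)$ and with \emph{uniformly bounded $6$-density at all scales}. The latter property guarantees the aggregate heat-kernel bound
\[
u_{S'}(x,t) \;=\; \int_{S'} u_{(y,T_{max})}(x,t)\, d\mathcal H^6(y) \;\leq\; \frac{C}{T_{max}-t},
\]
whose exponent $1$ is $(8-6)/2$, i.e.\ exactly the codimension-$2$ scaling. Integrating the lower bound $\Theta_{(y,T_{max})}\geq\epsilon$ over $y\in S'$ and applying Fubini then yields
\[
\epsilon\,\mathcal H^6(S') \;\leq\; \rho^2\int_M u_{S'}(x,T_{max}-\rho^2)\,|T|^2\,\vol \;\leq\; C\,E_0,
\]
with no dangling factors of $\rho$. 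The point is that the superposition $u_{S'}$ controls the kernel mass over the whole singular set at once, so no ball-by-ball energy localization is ever needed. If you want to keep a covering argument in the spirit of Chen--Struwe and He--Li, you would need to formulate the singular set via the \emph{localized} monotone quantity $Z$ or $\Psi$ with the spatial cutoff $\phi$ (as in Theorems \ref{monotonicitytheorem1} and \ref{monotonicitytheorem2}), so that the far tail is excised by construction rather than estimated; as written, with the global $\Theta$ from \eqref{ThetaFunctional}, Stage 2 fails.
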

\begin{proof}
    We adapt a similar scheme of proof as in \cite{GraysonHamilton1996}*{Theorem 4.3}, also found in \cite{Dwivedi2021}*{Theorem D}.

    First let's assume that $\mathcal{H}^6(S)$ is finite.
    Since $S$ is a closed set of finite $6$-dimensional measure, Theorem 4.2 of  \cite{GraysonHamilton1996} asserts that there exists $S'\subset S$ such that 
\[\mathcal{H}^6(S')\geq \frac{1}{2}\mathcal{H}^6(S).\]
    Now, the solution to the backwards heat equation is given by
\[
    u_{S'}(x,t)=\int_{y\in S'} u_{y,T_{max}}(x,t) d\mathcal{H}^6(y)
\]
    and it satisfies
\begin{equation}
\label{eq: bound on uS'}
    u_{S'}(x,t) 
    \leq \frac{C}{T_{max} -t}.
\end{equation}
    From the definition of $S$ in \eqref{eq: def sing S}, we have that
\[\epsilon \mathcal{H}^6(S')
=\int_{S'} \epsilon d\mathcal{H}^6(y)
\leq \int_{S'}\Theta_{(y,T_{max})}(\Om(T_{max}-\rho^2)) d\mathcal{H}^6(y),
\]
    so using definition \ref{ThetaFunctional} of $\Theta$
    and (\ref{eq: bound on uS'}) we have
\begin{equation*}
    \epsilon \mathcal{H}^6(S') 
    \leq \int_{S'}\int_M \rho^2 u_{(y,T_{max})}(x,T_{max}-\rho^2) |T(T_{max} - \rho^2)|^2d\mathcal{H}^6(y) \leq C E_0.
\end{equation*}
    To conclude the proof note that if instead $\mathcal{H}^6(S)$ was infinite then one could choose subset $S'\subset S$ with arbitrarily large $6$-dimensional Hausdorff measure, but repeating the above argument would give a contraction. 
    So we must have that indeed $\mathcal{H}^6(S)< \infty$ and this yields the result.
\end{proof}

Note that, for each $x\in S$, one can find a sequence $(x_i,t_i) \to (x,T_{max})$ such that $$\lim_{i \to \infty} |T(\Om(x_i,t_t))| \to \infty$$
cf. \cite{Loubeau2019}*{Theorem 2}, so indeed $S$ is the singular set of the flow.

\begin{remark}
    In the context of quaternion-K\"ahler geometry, the only distinguished classes of submanifolds are \emph{quaternionic submanifolds} i.e. submanifolds calibrated by $\Om$. 
    So it is natural to expect that singularities for the QK flow (\ref{harmonicQKflowequation}) would occur along such submanifolds. The result of Dadok et al. in \cite{Dadok1988}*{Theorem 3.7} and Harvey-Lawson in \cite{Harvey1982}*{Section V} assert that the only possible quaternionic submanifolds in $\mathbb{H}^{2}$ and $\mathbb{H}\mathbb{P}^{2}$ are $\mathbb{H}$ or $\mathbb{H}\mathbb{P}^1=S^4$;  this is significantly more restrictive than in the K\"ahler setting, in which plenty of examples can be easily generated by polynomials in $\C^n$ and $\mathbb{C}\mathbb{P}^n$. 
\end{remark}

We shall now show that \textit{type-I singularities} of the harmonic  QK flow are in fact modelled on shrinking solitons. Based on the analogy with the harmonic map flow, motivated from the results in  \cite{GraysonHamilton1996}
we define type-I singularities as follows: 
    
\begin{definition}
    A solution $\{\Om(t)\}_{t\in[0,T_{max})}$ to the harmonic QK flow (\ref{harmonicQKflowequation}) is said to encounter
    a \emph{type-I singularity} at $T_{max}$ if
\[\sup_{x\in M}|T(t)|\leq \frac{1}{\sqrt{C(T_{max}-t)}},\]
    where $C>0$ is a constant. 
    If a singularity does not satisfy the above bound, then it is said to be a \emph{type-II singularity}.
\end{definition}
\begin{theorem}
\label{type1singularitymodel}
    Suppose that a solution $\{\Om(t)\}_{t\in[0,T_{max})}$ to the harmonic flow (\ref{harmonicQKflowequation}) encounters a type-I singularity at $T_{max}$. Let $x \in M$ and $\lambda_i\searrow 0$ be a decreasing sequence, and consider the parabolic rescaled solution 
    $$
    \Om_i(t):=\lambda_i^{-4}\Om(T_{max}+\lambda^2_it).
    $$ 
    Then $(M,g_{\Om_i}, \Om_i(t),x)$ subconverges smoothly to an ancient solution $(\R^8,g_{0}, \{\Om_{\infty}(t)\}_{t<0},0)$ induced by a  shrinking soliton,  i.e.
\begin{equation}
    \mathrm{div}( T_{\Om_\infty})(x,t) = -T_{\Om_\infty}\Big(\frac{x}{2t}\Big).
\end{equation}
    Moreover, $x\in M\backslash S$ if, and only if, $\Om_{\infty}(t)$ is the stationary flow induced by a torsion-free QK structure on Euclidean $\R^8$.
\end{theorem}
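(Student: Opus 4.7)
The plan has four stages: verify the hypotheses of Theorem \ref{thm: compactness} for the parabolic rescalings; extract an ancient limit on $(\R^{8},g_{0})$; use the monotonicity formula on the flat limit to force self-similarity; and invoke $\epsilon$-regularity for the final equivalence.

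First I would apply Corollary \ref{corollaryparabolic} with $c_i=\lambda_i^{-1}$: each $\Om_i(t)=\lambda_i^{-4}\Om(T_{max}+\lambda_i^{2}t)$ solves the harmonic flow with respect to $g_{\Om_i}=\lambda_i^{-2}g_{\Om}$. Tracking how a $(0,3)$-tensor norm rescales under conformal change, together with Lemma \ref{lem: paraboliclemma}, gives the identity $|T_{\Om_i}(\cdot,t)|^{2}_{g_{\Om_i}}=\lambda_i^{2}|T_\Om(\cdot,T_{max}+\lambda_i^{2}t)|^{2}_{g_\Om}$; the type-I assumption then yields the uniform bound $|T_{\Om_i}(\cdot,t)|_{g_{\Om_i}}\le (-Ct)^{-1/2}$ on any compact subinterval of $(-\infty,0)$. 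Moreover, $|\nabla^{k}R_{\Om_i}|_{g_{\Om_i}}=\lambda_i^{k+2}|\nabla^{k}R_{\Om}|_{g_\Om}\to 0$ and $\mathrm{inj}_{g_{\Om_i}}(x)=\lambda_i^{-1}\mathrm{inj}_{g_{\Om}}(x)\to\infty$, so every hypothesis of Theorem \ref{thm: compactness} holds. On a subsequence $(M,g_{\Om_i},\Om_i(t),x)\to (N,g_\infty,\Om_{\infty}(t),0)$; the uniform curvature decay and blown-up injectivity radius force $(N,g_\infty)\cong(\R^{8},g_{0})$, and $\Om_\infty$ is ancient on $(-\infty,0)$.

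Second I would show $\Om_\infty$ is a shrinking gradient soliton. The functional $\Theta_{(p,t_0)}$ is invariant under parabolic rescaling, by the homogeneities of $(t_0-t)$, $u$, $|T|^{2}$ and $\vol$. The improved monotonicity formula of Theorem \ref{monotonicitytheorem1} (and its companion Theorem \ref{monotonicitytheorem2}), whose multiplicative and additive errors both vanish as $t\to T_{max}$, implies that $\Theta_{(0,0)}(\Om_\infty(s))$ is \emph{constant} in $s<0$. Specialising Lemma \ref{preAlmostMonotonicityLemma} to $(\R^{8},g_{0})$, where $R=0$, $\nabla u=-u\nabla f$ and $\mathrm{Hess}\,f=\tfrac{1}{2(-t)}g_{0}$, a short calculation shows that the terms involving $T(\nabla_{E_i}\nabla u)$, $|T(\nabla u)|^{2}/u$ and the isolated $u|T|^{2}$ cancel pairwise, leaving
\[\frac{d}{dt}\Theta_{(0,0)}(\Om_{\infty}(t))=-2(-t)\int_{\R^{8}}u\,|\mathrm{div}\,T_{\Om_{\infty}}-T_{\Om_{\infty}}(\nabla f)|^{2}\,\vol.\]
Constancy of $\Theta_{(0,0)}(\Om_\infty(\cdot))$ forces this to vanish, so $\mathrm{div}\,T_{\Om_{\infty}}=T_{\Om_{\infty}}(\nabla f)$. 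With $f=|x|^{2}/(-4t)$ one has $\nabla f=-x/(2t)$, exactly the stated equation; taking $X=\nabla f$ gives $\mathcal{L}_{X}g_{0}=(-t)^{-1}g_{0}$, which together with \eqref{gradientsoliton} identifies $\Om_\infty$ as a shrinking gradient soliton in the sense of Definition \ref{definitionofsoliton}.

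For the final equivalence, if $x\not\in S$ then by \eqref{eq: def sing S} there exists $\rho\in(0,\overline{\rho})$ with $\Theta_{(x,T_{max})}(\Om(T_{max}-\rho^{2}))<\epsilon$, and Theorem \ref{epsilonregularityGraysonHamilton} produces $r>0$ and $C<\infty$ with $|T_\Om|\le C/r$ on $B(x,r)\times[T_{max}-r^{2},T_{max}]$. Translating through the rescaling, $|T_{\Om_i}|_{g_{\Om_i}}(\cdot,0)\le C\lambda_i/r\to 0$ uniformly on compact subsets of the limit, hence $T_{\Om_\infty}(\cdot,0)\equiv 0$; Shi-type estimates \cite{Dwivedi2021}*{Proposition 2.16} together with the flow equation then make $\Om_\infty$ the stationary flow of a torsion-free QK structure. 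Conversely, if $\Om_\infty$ is stationary and torsion-free, then $\Theta_\infty=0$, and by the scaling invariance above $\Theta_{(x,T_{max})}(\Om(T_{max}-\rho^{2}))\to 0$ as $\rho\to 0$, placing $x$ outside $S$. The main technical obstacle will be the passage to the limit in the evolution formula for $\Theta$: smooth convergence $\Om_i\to\Om_\infty$ on compact subsets must be upgraded to convergence of the weighted integrals over all of $\R^{8}$, which requires dominating $|T_{\Om_i}|^{2}u$ by an integrable function independent of $i$—the type-I torsion estimate combined with the Gaussian tail of the backward heat kernel should suffice.
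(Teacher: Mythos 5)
Your proposal is correct and follows the same overall route as the paper, which simply cites Corollary \ref{corollaryparabolic}, Theorem \ref{thm: compactness}, Theorem \ref{almostmonotonicitytheorem}, and Grayson--Hamilton; your write-up usefully fleshes out the scaling identities, the specialisation of Lemma \ref{preAlmostMonotonicityLemma} to $\R^{8}$, and the omitted final equivalence via $\epsilon$-regularity. One imprecision worth flagging: the constancy of $\Theta_{(0,0)}(\Om_\infty(\cdot))$ does not follow from Theorems \ref{monotonicitytheorem1}--\ref{monotonicitytheorem2}, which govern the \emph{localised} functionals $Z$ and $\Psi$ built from the Euclidean kernel $G$ and a cutoff $\phi$, rather than $\Theta$. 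The correct ingredient is the weak almost-monotonicity Theorem \ref{almostmonotonicitytheorem} (part 1), whose multiplicative factor $K_1=e^{C(\xi(\tau_2)-\xi(\tau_1))}\to 1$ and additive error $K_2(\tau_2-\tau_1)(E_0+1)\to 0$ as $\tau_1,\tau_2\to T_{max}$ give existence of $L:=\lim_{t\to T_{max}}\Theta_{(x,T_{max})}(\Om(t))$; parabolic scale-invariance of $\Theta$ then forces $\Theta_{(0,0)}(\Om_\infty(s))\equiv L$. Your subsequent computation from Lemma \ref{preAlmostMonotonicityLemma} on $\R^8$ and the handling of the final equivalence via the definition \eqref{eq: def sing S} and Theorem \ref{epsilonregularityGraysonHamilton} are sound, and you correctly identify the remaining technical step (dominated convergence of the weighted integrals) needed to pass $\Theta$ to the limit.
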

\begin{proof}
    From Corollary  \ref{corollaryparabolic}, we see that $\{\Om_i(t)\})$ is well-defined for $t\in [-\lambda_i^{-2}T_{max},0)$.
    The fact that the limit is an ancient solution on $\R^8$ now follows from the compactness in Theorem \ref{thm: compactness}. We  conclude that the limit is a shrinking soliton by Theorem \ref{almostmonotonicitytheorem}, see also \cite{GraysonHamilton1996}*{Theorem 5.1}.
\end{proof}

It was recently shown that finite-time singularities do occur for the harmonic flow of almost Hermitian structures \cite{HeLi2021}. Their construction can be easily adapted to our setting, see also \cite{Fadel2022}*{Theorem 2.16}. 
However, it is worth mentioning that those examples are based on a non-constructive argument, and as such the concrete nature of the singularity is unknown. Next we shall construct several \emph{explicit} solutions to the harmonic flow illustrating long-time existence and convergence to both torsion free and (non-trivial) harmonic $\Sp(2)\Sp(1)$-structures.

\section{Explicit solutions of the harmonic flow}
\label{sec: explicit sols of the HF}

In this section we construct explicit solutions to the harmonic QK flow (\ref{harmonicQKflowequation}). In particular, we exhibit convergence to harmonic QK structures in infinite time; examples of a similar flavour for the harmonic $\rG_2$ flow on $S^7$ were found in \cite{Loubeau2022}. We also construct a steady harmonic $\Sp(2)\Sp(1)$-soliton (see Definition \ref{definitionofsoliton}), which to the best of our knowledge is in fact the first nontrivial concrete example of a harmonic soliton for any $H$-structure. 

\subsection{Eternal solutions}
\label{sec: explicitflowsolutionsection}

We find two eternal solutions to the harmonic QK flow (\ref{harmonicQKflowequation}). 
The first example we describe is on $\R^8$, endowed with the quaternionic hyperbolic metric as in sub-section \ref{sec: example2}.
We modify the torsion-free QK $4$-form in a suitable way to a non torsion-free one then show that the harmonic flow indeed converges back to the torsion-free solution in infinite time. 
The second example we describe is on $\SU(3)$, which we endow with a left-invariant metric. From the results of Poon and Salamon in \cite{PoonSalamon1991}, we know that there are no torsion-free QK structures on $\SU(3)$, so it is especially interesting to understand harmonic structures in this situation as the next `best' possible QK structures. We show that the flow in this case converges to a left-invariant harmonic QK structure and that moreover it induces a hypercomplex structure on $\SU(3)$ cf. \cite{Joyce1992}*{Section 3, Example 1}. The latter is a new example of a strictly harmonic QK structure.

We begin by proving the following elementary result:
\begin{proposition}
    Suppose that $\Om_0$ is invariant under an isometry $f:M\to M$ and that $\Om(t)$ is the solution to the harmonic flow (\ref{harmonicQKflowequation}). Then $\Om(t)$ is also invariant under $f$.
\end{proposition}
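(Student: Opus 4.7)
The plan is to use diffeomorphism-equivariance of the harmonic flow together with uniqueness of solutions. The harmonic QK flow \eqref{harmonicQKflowequation} is built purely from $\Om$, the induced metric $g_\Om$, the associated Levi-Civita connection, the torsion tensor $T$, and the operator $\diamond$, all of which are natural under isometries. Concretely, if $f:M\to M$ is an isometry of $g=g_{\Om_0}$, then $f^*g_\Om = g_{f^*\Om}$, $f^*\nabla = \nabla^{f^*}$, $f^*(T(\Om)) = T(f^*\Om)$, and $f^*(\alpha\diamond\Om) = (f^*\alpha)\diamond(f^*\Om)$. Consequently, applying $f^*$ commutes with the operator $\Om\mapsto(\mathrm{div}\, T)\diamond\Om$ on the right-hand side of \eqref{harmonicQKflowequation}.

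My first step would be to verify this naturality in a single clean line: pulling back the flow equation by $f$ yields
\begin{equation*}
    \frac{d (f^*\Om)}{dt} \;=\; f^*\bigl((\mathrm{div}\, T(\Om))\diamond \Om\bigr)
    \;=\; \bigl(\mathrm{div}\, T(f^*\Om)\bigr)\diamond(f^*\Om),
\end{equation*}
so $\widetilde\Om(t):=f^*\Om(t)$ is itself a solution of \eqref{harmonicQKflowequation}. The hypothesis that $\Om_0$ is $f$-invariant gives $\widetilde\Om(0)=f^*\Om_0=\Om_0$, so $\widetilde\Om(t)$ and $\Om(t)$ are two smooth solutions of the harmonic QK flow sharing the initial datum $\Om_0$.

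The second step invokes uniqueness of short-time solutions to the harmonic flow of $H$-structures, which is \cite{Loubeau2019}*{Theorem 1} cited after \eqref{harmonicQKflowequation}. This gives $f^*\Om(t)=\Om(t)$ on the maximal interval of existence, i.e.\ $\Om(t)$ is $f$-invariant for all $t$.

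There is no real obstacle, only a bookkeeping point worth mentioning: since the harmonic flow preserves the underlying Riemannian metric, $g_{\Om(t)}\equiv g_{\Om_0}$, so the assumption that $f$ is an isometry of $(M,g)$ persists along the flow and naturality continues to hold for all $t\in[0,T_{\max})$. In particular the argument applies not only to a single isometry but to any subgroup $G\subset\mathrm{Isom}(M,g_{\Om_0})$ under which $\Om_0$ is invariant; this is precisely what will be used in \S\ref{sec: explicitflowsolutionsection} to reduce the flow to an ODE on the space of left-invariant structures on Lie groups such as $\SU(3)$.
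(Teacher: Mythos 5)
Your proof is correct and follows essentially the same strategy as the paper's: show that $f^*\Om(t)$ is again a solution by naturality of $\diamond$, $\nabla$, and $T$ under isometries, then invoke uniqueness of short-time solutions from \cite{Loubeau2019}*{Theorem 1} to identify $f^*\Om(t)$ with $\Om(t)$. Your additional remarks about the metric being preserved along the flow and the application to subgroups of isometries are accurate observations consistent with how the paper uses this result in \S\ref{sec: explicitflowsolutionsection}.
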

\begin{proof}
    Given $\Om(t)$, we can define another solution $\Psi(t)=f^*\Om(t)$ to (\ref{harmonicQKflowequation}) since
\[\frac{\partial}{\partial t}(\Psi(t))=f^*\frac{\partial}{\partial t}(\Om(t))=f^*(\mathrm{div}T \diamond \Om)=\mathrm{div}(f^*T) \diamond \Psi,\]
    where we used the facts that $\diamond$ and $\nabla$ only depend on the metric $g$, and that $f$ is an isometry. Since $f^*\Om(0)=\Om(0)$, by uniqueness of the flow cf. \cite{Loubeau2019}*{Theorem 1} it follows that $f^*\Om(t)=\Om(t)$.
\end{proof}

As a consequence, if $\Om_0$ is $G$-invariant then so is $\Om(t)$. In particular, for invariant $\Om_0$ on a homogeneous space, the harmonic flow reduces to an ODE system in $t$, so this provides a natural set up to study long-time behaviour and (possible) finite-time singularities. Our examples shall exhibit the former behaviour.

\subsubsection{Convergence to a torsion-free solution on \texorpdfstring{$\mathcal{H}\mathbb{H}^2$}{}}

Having shown in Section \ref{examplesofharmonicQKsection} that there indeed exist non-torsion-free QK structures with divergence-free intrinsic torsion, let us describe an explicit eternal solution to the flow converging (in infinite time) to a torsion-free solution. 

As in \S\ref{sec: example2}, we shall again consider the hyperbolic quaternionic plane $\mathcal{H}\mathbb{H}^2$, but now viewed as a solvable Lie group with the coframe $\{E^i\}$ satisfying the following structure equations:
\[\begin{array}{ccl}
   dE^1 = -E^{18}, &  & dE^5 = -2E^{13}+2E^{24}-2E^{58}, \\
   dE^2 = -E^{28}, &  & dE^6 = -2E^{14}-2E^{23}-2E^{68}, \\
   dE^3 = -E^{38}, &  & dE^7 = +2E^{12}+2E^{34}-2E^{78}, \\
   dE^4 = -E^{48}, &  & dE^8 = 0.
\end{array}\]
The equivalence with the cohomogeneity one description of $\mathcal{H}\mathbb{H}^2$ in \S\ref{sec: example2} can easily be seen by setting $E^1=(1-s)^{-1/4}dx_1$, $E^5=(1-s)^{-1/2}\al_1$ and so on.

Consider now the $1$-parameter family of QK $4$-forms $\Om_{a,b}$ defined by (\ref{qk4form}), where we take
\begin{align*}
    \om_1 &= E^{12}+E^{34}+E^{56}+E^{87},\\
	\om_2 &= E^{13}+E^{42}+(b E^5 +a E^6) \w E^{8}+E^{7} \w (-a E^5+b E^6),\qwithq a^2+b^2=1,
	\\
	\om_3 &= E^{14}+E^{23}+(b E^5 +a E^6)\w E^{7}+(-a E^5+b E^6) \w E^{8}.
\end{align*}
	Geometrically, we are rotating the $1$-forms $E^5$ and $E^6$ by $(a,b)\in U(1)$, which ensures that $\Om_{a,b}$ induces the same metric. The solution $(a,b)=(0,1)$ corresponds to the unique torsion-free QK $4$-form in this $1$-parameter family. A long but straightforward computation shows that 
\[\nabla_{\nabla_{E_i}E_i} \Om =0\]
and
\begin{align*}
    T(E_1)
    &=16a(-E^{17}+E^{28}+E^{36}-E^{45})+16(1-b)(E^{18}+E^{27}-E^{35}-E^{46}),\\
    T(E_2)
    &=16a(-E^{18}-E^{27}-E^{35}-E^{46})+16(1-b)(-E^{17}+E^{28}-E^{36}+E^{45}),\\
    T(E_3)
    &=16a(-E^{16}+E^{25}-E^{37}+E^{48})+16(1-b)(E^{15}+E^{26}+E^{38}+E^{48}),\\
    T(E_4)
    &=16a(E^{15}+E^{26}-E^{47}-E^{38})+16(1-b)(E^{16}-E^{25}-E^{37}+E^{48}),\\
    T(E_5)
    &=16a(E^{14}+E^{23}-E^{57}-E^{68})+16(1-b)(E^{13}-E^{24}+E^{58}-E^{67}),\\
    T(E_6)
    &=16a(-E^{13}+E^{23}+E^{58}-E^{67})+16(1-b)(E^{14}+E^{23}+E^{57}+E^{68}),\\
    T(E_7)&=
    T(E_8)=0.
\end{align*}
From this one finds that
\begin{equation}
    \mathrm{div}\ T = -192 a (E^{12}+E^{34}-E^{56}-E^{87}).
\end{equation}
Using the definition of the infinitesimal diamond action (\ref{diamondoperator}), we can compute the harmonic QK flow (\ref{harmonicQKflowequation}) and it turns out that the flow preserves the above ansatz (which is in fact what motivated this choice in the first place). Due to the symmetry of the problem, the latter reduces to the single ODE:
\[\frac{d}{dt}a(t) = -768  a(t) \sqrt{1-a(t)^2}.\]
One easily solves the latter to find the eternal solution
$$
a(t)=\frac{1}{\cosh{(768 t)}}
\qandq b(t)={\tanh{(768 t)}},
\quad\forall t \in \R.
$$ 
Moreover,  $\displaystyle\lim_{t \to \infty}(a(t),b(t)) = (0,1)$, i.e. we indeed converge to the torsion-free QK structure of the hyperbolic quaternionic plane.

From the above, the total Dirichlet energy would formally be given by
\[E(\Om(t))=6144(1-b(t))\int_M \vol_M,\]
which is infinite (unless $b\neq 1$), since $M$ has infinite volume. 
However, the normalised quantity 
\[\lim_{r\to\infty}\frac{\int_{B(0,r)}|T|^2\vol_M}{\int_{B(0,r)}\vol_M}=6144(1-b(t))\]
is well-defined and indeed converges to 0 as $t\to \infty$. The above example shows that, even if the harmonic QK flow is not rigorously speaking the gradient flow of the Dirichlet energy functional on this non-compact manifold, it still exhibits some of its informally expected properties. 
Next we exhibit a compact example.

\subsubsection{Convergence to a harmonic solution on \texorpdfstring{$\SU(3)$}{} which is not torsion-free}
\label{solutiononsu3section}

We shall now take $M^8=\SU(3)$ and construct an $SU(3)$-invariant solution to (\ref{harmonicQKflowequation}). We first begin by expressing the Maurer-Cartan form of $\SU(3)$ explicitly as
\[\begin{pmatrix}
	i(\theta_1+\theta_2) & i\theta_3-\theta_4 & \theta_5+i\theta_6 \\
	i\theta_3+\theta_4  & i(\theta_1-\theta_2) & i\theta_7+\theta_8 \\
	-\theta_5+i\theta_6  & i\theta_7-\theta_8 & -2i\theta_1 
\end{pmatrix}\!,\]
where $\theta_i$ denote a left-invariant coframing.
We define a left invariant metric by
\[g=\theta_1^2+\theta_2^2+\theta_3^2+\theta_4^2+\theta_5^2+\theta_6^2+\theta_7^2+\theta_8^2\]
and a compatible $\Sp(2)$-structure determined by the triple
\begin{align*}
	\om_1 &= \theta_{12}+\theta_{34}+\theta_{56}+\theta_{78},\\
	\om_2 &=\theta_{1} \w (\cos(f)\theta_3+\sin(f)\theta_4)-\theta_{2}\w (-\sin(f)\theta_3+\cos(f)\theta_4)+\theta_{57}-\theta_{68},\\
	\om_3 &=\theta_{1}\w (-\sin(f)\theta_3+\cos(f)\theta_4)+\theta_{2}\w (\cos(f)\theta_3+\sin(f)\theta_4)+\theta_{58}+\theta_{67},
\end{align*}
where $f\in[0,2\pi)$ is an arbitrary constant. With $\om_i$ as above we can now define a QK $4$-form $\Om$ on $\SU(3)$ by  (\ref{qk4form}), which is indeed compatible with $g$. 
Note that $f$ can in fact be taken to be any function on $\SU(3)$ and the above will still hold, but since we are only interested in $\SU(3)$-invariant structures we shall restrict to the situation when $f$ is constant. Moreover, we also note the following special case:
\begin{proposition}
    When $f=0$, the quaternionic structure defined by the $4$-form $\Om$ is in fact a hypercomplex structure. 
\end{proposition}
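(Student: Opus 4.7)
The plan is to establish hypercomplexity by verifying integrability of each of the three almost complex structures $I_1, I_2, I_3$ associated to the triple $(\om_1,\om_2,\om_3)$ via $\om_k(X,Y)=g(I_kX,Y)$. When $f=0$ the expressions for $\om_2$ and $\om_3$ collapse to the standard forms \eqref{symplecticform2}--\eqref{symplecticform3} in the left-invariant coframing $\{\theta_i\}$, so the endomorphisms $I_k$ act on the dual basis $\{E_i\}$ by the very matrices representing multiplication by $\mathbf{i},\mathbf{j},\mathbf{k}\in\mathbb{H}$ on $\mathbb{H}^2$. Consequently they satisfy the quaternionic relations $I_1I_2=I_3=-I_2I_1$ and cyclic permutations pointwise; it therefore suffices to prove that each $I_k$ is integrable in order to upgrade the quaternionic structure to a hypercomplex one.

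Since all three structures are left-invariant, Newlander--Nirenberg reduces integrability to the algebraic identity
\[
N_{I_k}(X,Y) = [I_kX, I_kY] - [X,Y] - I_k[I_kX,Y] - I_k[X,I_kY] = 0
\]
on the Lie algebra $\mathfrak{su}(3)$. First I would extract the Lie brackets $[E_i,E_j]$ from the Maurer-Cartan equation $dM=-M\wedge M$ applied to the matrix of $1$-forms given in the excerpt, producing structure constants $c^\ell_{ij}$ via $d\theta_\ell=-\tfrac12 c^\ell_{ij}\,\theta_i\wedge\theta_j$. Substituting into $N_{I_k}$ for each of the $\binom{8}{2}=28$ unordered basis pairs and each $k\in\{1,2,3\}$ then reduces the proposition to a finite, purely algebraic check. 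The triality symmetry interchanging $(I_1,I_2,I_3)$ that is visible in the block structure of $M$ should allow one to treat just one of the $I_k$ in detail and deduce the other two by an equivariance argument, cutting the bookkeeping by a factor of three.

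The main obstacle is sheer volume of computation rather than conceptual depth. A cleaner alternative, which I would pursue in parallel, is to match this left-invariant framing with Joyce's construction \cite{Joyce1992} of hypercomplex structures on compact Lie groups. In Joyce's recipe for $\SU(3)$ one fixes a maximal torus $T^2$ together with a compatible pairing of positive root spaces; in our setting the diagonal generators of the Maurer-Cartan form span a torus producing $\theta_1,\theta_2$, while the three complex off-diagonal entries produce precisely the three root-space pairs $(\theta_3,\theta_4),(\theta_5,\theta_6),(\theta_7,\theta_8)$. I would verify that, at $f=0$, the triple $(I_1,I_2,I_3)$ coincides with the triple built from Joyce's prescription applied to this Cartan and root pairing, so that integrability is inherited from his theorem. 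Either route completes the proof; the second has the pleasant side-effect of explaining intrinsically why $f=0$ is the distinguished point in the family.
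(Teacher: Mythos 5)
Your overall plan is sound and would establish the proposition, but the route differs from the paper's. The paper's proof is much more compact: rather than computing the Nijenhuis tensor on all basis pairs, it uses the standard criterion that an almost complex structure $I$ on a $2n$-manifold is integrable if and only if $d\Lm^{n,0}_I\subset\Lm^{n,1}_I$, which for a trivial canonical bundle reduces to checking that $d\Psi = \alpha\wedge\Psi$ for one nowhere-vanishing $(n,0)$-form $\Psi$ and some $(0,1)$-form $\alpha$. The paper takes $\Psi = (\om_2+i\om_3)\wedge(\om_2+i\om_3)$ as a generator of $\Lm^{4,0}_{I_1}$ (it is $(2,0)+(2,0)=(4,0)$ for $I_1$) and verifies by a single exterior-derivative computation that $d\Psi = 2(\theta_1-i\theta_2)\wedge\Psi$, where $\theta_1-i\theta_2$ is manifestly $(0,1)$ for $I_1$; the analogous identities (5.2)--(5.3) handle $I_2$ and $I_3$. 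This is one short computation per complex structure instead of $28$ Nijenhuis evaluations per complex structure.

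Two caveats on your proposal. First, the symmetry you call ``triality'' is not a $\Spin(8)$ triality; what might actually do the job is the right $\U(2)$-invariance of $\Om$ at $f=0$ (Proposition~\ref{prop: rightinvariance}), which can rotate the sphere of almost complex structures $\{aI_1+bI_2+cI_3 : a^2+b^2+c^2=1\}$, but this would need to be checked explicitly rather than invoked by analogy. Second, your alternative route via Joyce's theorem is entirely legitimate and is in fact what the paper points to with the citation to \cite{Joyce1992}*{Section 3, Example 1}; however, a clean proof this way requires actually matching the coframe $\{\theta_i\}$, the torus generators $\theta_1,\theta_2$, and the root-space pairings $(\theta_3,\theta_4),(\theta_5,\theta_6),(\theta_7,\theta_8)$ against Joyce's conventions, which is not shorter than the paper's direct verification. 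Both of your routes buy generality (the first applies to any left-invariant structure, the second explains conceptually why $f=0$ is distinguished), while the paper's buys brevity and self-containment.
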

\begin{proof}
    First observe that the complex $2$-form $\om_2+i\om_3$ is of type $(2,0)$, with respect to the almost complex structure $I_1$ determined by $\om_1$ and $g$. When $f=0$, a straightforward computation shows that
\begin{gather}
    d((\om_2+i\om_3)\w (\om_2+i\om_3))=2(\theta_1-i\theta_2)\w (\om_2+i\om_3)\w (\om_2+i\om_3),
    \label{hypercomplex1}\\
    d((\om_3+i\om_1)\w (\om_3+i\om_1))=2(\theta_1-i\theta_3)\w (\om_3+i\om_1)\w (\om_3+i\om_1),
    \label{hypercomplex2}\\
    d((\om_1+i\om_2)\w (\om_1+i\om_2))=2(\theta_1-i\theta_4)\w (\om_1+i\om_2)\w (\om_1+i\om_2).
    \label{hypercomplex3}
\end{gather} 
From (\ref{hypercomplex1}) we see that $d \Lm^{4,0}_{I_1} \subset \Lm^{4,1}_{I_1}$, and thus $I_1$ is in fact a complex structure. The same argument applies to $I_2$ and $I_3$, and hence $I_1,I_2,I_3$ determine a hypercomplex structure. 
\end{proof}

\begin{remark}
When $f \neq 0$, the almost complex structures $I_2$ and $I_3$ are non-integrable. For instance,  a simple computation shows that 
\[
 d((\om_3+i\om_1)\w (\om_3+i\om_1)) \w \om_2 =-12\sin(f)(\theta_{1235678}+i\theta_{1345678})
+12(\cos(f)-1)(\theta_{1245678}+i\theta_{345678}),
\]
and hence this implies that $J_2$ is non-integrable.
\end{remark}
\begin{multicols}{2}
In what follows we denote by $\{E_i\}$ the dual frame to $\{\theta_i\}$. One can view $\SU(3)$ as an $\SU(2)$-bundle over $S^5$, where the $S^3$ fibres are generated by the left invariant  vector fields $E_2,E_3,E_4$. 
Furthermore, the Hopf fibration exhibits $S^5$ as an $\mathrm{U}(1)$-bundle over $\C\mathbb{P}^2$, where the $S^1$ fibres correspond to the orbit of the vector field $E_1$. This illustrates the diagonal embedding of $\mathrm{U}(2)=\mathrm{U}(1)\SU(2)$ in $\SU(3)$:
\begin{center}
\begin{tikzcd}
	\SU(2) \arrow[hookrightarrow]{r} 
	& \SU(3)  \arrow[d]  \\
	\rU(1) \arrow[hookrightarrow]{r}
	& S^5 \arrow[d]\\
	& \C\mathbb{P}^2
\end{tikzcd}
\end{center}
\end{multicols}
In view of the above, we can interpret the $\mathrm{U}(1)$ rotation defined by $f$ as lying in the $\SU(2)$ fibre. It is worth pointing out that the metric $g$ is in fact $\SU(3)\times \mathrm{U}(2)$-invariant, where the $\mathrm{U}(2)$ corresponds to the \textit{right} action generated by $E_1,E_2,E_3,E_4$. Note that $g$ is not the bi-invariant metric of $\SU(3)$, the bi-invariant Einstein metric $g_{E}$ is instead given by
\[g_E=3\theta_1^2+\theta_2^2+\theta_3^2+\theta_4^2+\theta_5^2+\theta_6^2+\theta_7^2+\theta_8^2.\]
\begin{proposition}\label{prop: rightinvariance}
    The left $\SU(3)$-invariant $4$-form $\Om$ is also invariant under the right action of $\mathrm{U}(2)$ when $f=0$ but only $\mathrm{U}(1)^2$ invariant when $f \neq 0.$
\end{proposition}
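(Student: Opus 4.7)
The strategy relies on the fact that a left $SU(3)$-invariant form on $SU(3)$ is right $H$-invariant for $H\subset SU(3)$ iff it is $\mathrm{Ad}(H)^{*}$-invariant as an element of $\Lm^{4}\su(3)^{*}$; infinitesimally, for each generator $\xi\in\mathfrak{u}(2)$, this amounts to the vanishing on $\Om$ of the derivation $\mathcal{L}_{\xi}^{R}\theta_{j}=-\sum_{k}\theta_{j}([\xi,E_{k}])\,\theta_{k}$. Using the Maurer--Cartan conjugation identity $R_{u}^{*}(g^{-1}dg)=u^{-1}(g^{-1}dg)u$, one reads off the action of each $E_{i}$ on the coframe explicitly, and then verifies invariance of $\Om$ case by case.

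The torus case $\mathrm{U}(1)^{2}=\langle E_{1},E_{2}\rangle$ is straightforward: the diagonal generators only rescale each off-diagonal entry of the Maurer--Cartan matrix by a phase, so $E_{1}$ fixes $\theta_{1},\ldots,\theta_{4}$ and rotates $(\theta_{5},\theta_{6})$, $(\theta_{7},\theta_{8})$ in opposite senses at rate $3$, while $E_{2}$ fixes $\theta_{1},\theta_{2}$ and rotates $(\theta_{3},\theta_{4}),(\theta_{5},\theta_{6}),(\theta_{7},\theta_{8})$ at rates $(\pm 2,\pm 1,\pm 1)$; both actions visibly preserve $\om_{1}$. On $(\om_{2},\om_{3})$, the $(\theta_{3},\theta_{4})$-rotation induces the shift $f\mapsto f+2t$ via $A^{(f)}=\cos f\,A^{(0)}+\sin f\,A'^{(0)}$ (where $A^{(f)}$, $A'^{(f)}$ denote the $\theta_{\le 4}$-parts of $\om_{2}^{(f)}$, $\om_{3}^{(f)}$), while the $(\theta_{5},\ldots,\theta_{8})$-rotation induces a simultaneous $SO(2)$-rotation between $B:=\theta_{57}-\theta_{68}$ and $B':=\theta_{58}+\theta_{67}$. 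The pointwise identities
\[
A^{(f)}\wedge A^{(f)}=A'^{(f)}\wedge A'^{(f)}=2\theta_{1234},\quad A^{(f)}\wedge A'^{(f)}=0,\quad B\wedge B=B'\wedge B'=2\theta_{5678},\quad B\wedge B'=0,
\]
combined with a trigonometric check of the cross-terms $A^{(f)}\wedge B+A'^{(f)}\wedge B'$ and $A^{(f)}\wedge B'-A'^{(f)}\wedge B$, show that $\om_{2}^{2}+\om_{3}^{2}$ is invariant for every~$f$. Hence $\Om$ is right $\mathrm{U}(1)^{2}$-invariant unconditionally.

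For the non-torus generators $E_{3},E_{4}$, I compute the brackets $[E_{3},E_{2}]=-2E_{4}$, $[E_{3},E_{4}]=2E_{2}$, $[E_{3},E_{5}]=E_{7}$, $[E_{3},E_{6}]=-E_{8}$, $[E_{3},E_{7}]=-E_{5}$, $[E_{3},E_{8}]=E_{6}$ (with $[E_{3},E_{1}]=[E_{3},E_{3}]=0$), so that $\mathcal{L}_{E_{3}}^{R}$ sends $\theta_{2}\mapsto-2\theta_{4}$, $\theta_{4}\mapsto 2\theta_{2}$, $\theta_{5}\mapsto\theta_{7}$, $\theta_{7}\mapsto-\theta_{5}$, $\theta_{6}\mapsto-\theta_{8}$, $\theta_{8}\mapsto\theta_{6}$, and annihilates $\theta_{1},\theta_{3}$. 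A direct calculation at $f=0$ yields
\[
\mathcal{L}_{E_{3}}^{R}\om_{1}=-2\om_{3},\qquad \mathcal{L}_{E_{3}}^{R}\om_{2}=0,\qquad \mathcal{L}_{E_{3}}^{R}\om_{3}=2\om_{1},
\]
i.e.\ an $\mathfrak{so}(3)$-rotation in $\langle\om_{1},\om_{3}\rangle$, which preserves $\om_{1}^{2}+\om_{2}^{2}+\om_{3}^{2}$ since $2$-forms commute under wedge; an analogous verification for $E_{4}$ completes right $\mathrm{U}(2)$-invariance at $f=0$. For general~$f$, writing $\om_{i}^{(f)}=\om_{i}^{(0)}+\delta_{i}$ with $\delta_{2}=(\cos f-1)A^{(0)}+\sin f\,A'^{(0)}$ and $\delta_{3}=(\cos f-1)A'^{(0)}-\sin f\,A^{(0)}$, the same computation yields
\[
\mathcal{L}_{E_{3}}^{R}\Om \;=\; 2\sin f\cdot X \;+\; 2(\cos f-1)\cdot Y,
\]
with $X:=B\wedge(\theta_{12}+\theta_{34})-(\theta_{56}+\theta_{78})\wedge A^{(0)}$ and $Y:=(\theta_{56}+\theta_{78})\wedge A'^{(0)}+B'\wedge(\theta_{12}+\theta_{34})$. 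Upon expansion, $X$ and $Y$ are supported on two disjoint sets of $8$ linearly independent basis $4$-forms (each with coefficients $\pm 1$), so $X$ and $Y$ are themselves linearly independent; consequently $\mathcal{L}_{E_{3}}^{R}\Om=0$ forces $\sin f=\cos f-1=0$, i.e.\ $f=0$. Combined with the torus case, this proves the proposition.

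The main technical obstacle is the disjoint-support claim for $X$ and $Y$ in the last paragraph: one must list all sixteen basis $4$-forms produced and verify that no two coincide, so that the two trigonometric coefficients must vanish independently. This ultimately reflects the orthogonality of the $\mathrm{U}(2)$-isotypic components of $\Lm^{4}(\R^{8})^{*}$ in which $X$ and $Y$ lie, but must be checked by direct expansion.
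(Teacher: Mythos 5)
Your proposal is correct and follows the same strategy as the paper's (one-line) proof, which simply says to verify $\mathcal{L}_{E_i}\Om=0$ for $i=1,\dots,4$ when $f=0$ and only for $i=1,2$ when $f\neq 0$; the paper does not exhibit the computation. You carry out that verification in full, which I have checked: the bracket relations for $[E_3,\cdot]$, the resulting action on the coframe, the identities $\mathcal{L}_{E_3}\om_1=-2\om_3$, $\mathcal{L}_{E_3}\om_2=0$, $\mathcal{L}_{E_3}\om_3=2\om_1$ at $f=0$, and the final expansion $\mathcal{L}_{E_3}\Om=2\sin f\cdot X+2(\cos f-1)\cdot Y$ with $X,Y$ supported on disjoint sets of eight basis $4$-forms are all correct. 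The main added value of your write-up is the clean reduction of the $f\neq 0$ non-invariance to the linear independence of $X$ and $Y$, and the observation that at $f=0$ the Lie derivative acts as an infinitesimal $\mathfrak{so}(3)$-rotation on $(\om_1,\om_2,\om_3)$, which preserves $\om_1^2+\om_2^2+\om_3^2$ automatically since $2$-forms commute under wedge.

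One small stylistic note: to conclude that $\Om$ is \emph{not} right $\mathrm{U}(2)$-invariant when $f\neq 0$, checking $E_3$ alone already suffices, so the "analogous verification for $E_4$" is only needed for the positive assertion at $f=0$, which is indeed how you phrase it. Also be slightly careful that your stated rotation "rates $(\pm 2,\pm 1,\pm 1)$" for $E_2$ describe the rates on the individual pairs $(\theta_3,\theta_4)$, $(\theta_5,\theta_6)$, $(\theta_7,\theta_8)$, but the induced rotation on $(A,A')$ and on $(B,B')$ is at rate $2$ in both cases (the two unit-rate factors compound on $B,B'$), which is what makes $\mathcal{L}_{E_2}\om_2^{(f)}=2\om_3^{(f)}$ and $\mathcal{L}_{E_2}\om_3^{(f)}=-2\om_2^{(f)}$ hold for every $f$; you may want to state this explicitly to make the $\mathrm{U}(1)^2$-invariance argument fully airtight.
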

\begin{proof}
It suffices to verify that $\mathcal{L}_{E_i}\Om=0$ for $i=1,..,4$ when $f=0$ but if $f\neq 0$ then the latter only holds for $i=1,2$.
\end{proof}

We shall now compute the intrinsic torsion $T$ of $\Om$. A long but straightforward calculation show that $T$ is given by
\begin{align*}
    T(E_1)=&\ 
    T(E_2)=0\\
    T(E_3)=&\ 8(\cos(f)-1) (\theta^{13}+\theta^{42}+\theta^{57}+\theta^{86})+8\sin(f)(\theta^{14}+\theta^{23}+\theta^{85}+\theta^{76})\\
    T(E_4)=&\ 8(\cos(f)-1) (\theta^{14}+\theta^{23}+\theta^{58}+\theta^{67})-8\sin(f)(\theta^{13}-\theta^{24}-\theta^{57}+\theta^{68})\\
    T(E_5)=&\ -(8\cos(f)+4)\theta^{15}+(8\sin(f)+12)\theta^{16}-(8\sin(f)-4)\theta^{25}+(12-8\cos(f))\theta^{26}\\&
    -(4\cos(f)-4\sin(f)-8)(\theta^{37}+\theta^{48})+4(\cos(f)+\sin(f))(\theta^{38}-\theta^{47}) \\
    T(E_6)=&\ -(8\sin(f)+12)\theta^{15}-(8\cos(f)+4)\theta^{16}-(12-8\cos(f))\theta^{25}-(8\sin(f)-4)\theta^{26}\\
    &+(4\cos(f)-4\sin(f)-8)(\theta^{38}-\theta^{47})+4(\cos(f)+\sin(f))(\theta^{37}+\theta^{48})\\
    T(E_7)=&\ -(8\cos(f)+4)\theta^{17}+(8\sin(f)-12)\theta^{18}-(8\sin(f)+4)\theta^{27}+(12-8\cos(f))\theta^{28}\\
    &+(4\cos(f)+4\sin(f)-8)(\theta^{46}+\theta_{35})+4(\cos(f)-\sin(f))(\theta^{36}-\theta^{45})\\
    T(E_8)=&\ -(8\sin(f)-12)\theta^{17}-(8\cos(f)+4)\theta^{18}+(8\cos(f)-12)\theta^{27}-(4+8\sin(f))\theta^{28}\\
    &-(4\cos(f)+4\sin(f)-8)(\theta^{36}-\theta^{45})+4(\cos(f)-\sin(f))(\theta^{35}-\theta^{46})
\end{align*}
    From this one finds that
\begin{equation}
    \mathrm{div}\ T = 32\sin(f) (\theta^{12}+\theta^{34}-\theta^{56}-\theta^{78}).
\end{equation}
When $f=0$, we see that $\Om$ indeed defines a harmonic QK structure, yet $T$ is non-zero. The flow equation (\ref{harmonicQKflowequation}) becomes the ODE
\[\frac{d}{dt}(\cos(f(t)))=128(1-\cos^2(f(t))),\]
where now we consider $f$ as a function of $t$ only. The general solution is given by
\[\cos(f(t))=\tanh(128t),\]
and hence $\displaystyle\lim_{t\to \infty}\cos(f(t)) = 1$.
Indeed we also see that the total energy
\[E(\Om(t))=3584\big(1-\frac{4}{7}\cos(f(t))\big)\vol(SU(3))\]
is decreasing. In future work we hope to study more systematically harmonic $\Sp(2)\Sp(1)$- and $\Spin(7)$-structures on $\SU(3)$, in a similar spirit to the study of harmonic homogeneous $\rG_2$-structures in \cite{Loubeau2022}. In particular, it would be interesting to see if the above harmonic structure is a global minimum of the energy functional.

\begin{remark}
    It is a well-known phenomenon in the context of Ricci flow that the symmetry group (in this case the isometry group) is preserved in finite time, but can increase in the long-time limit. Our example shows that a similar feature holds for the harmonic flow of QK structures: while $\Om(t)$ is merely $\SU(3) \times \mathrm{U}(1)^2$-invariant, the limit is actually $\SU(3) \times \mathrm{U}(2)$-invariant (see Proposition \ref{prop: rightinvariance}).
\end{remark}

\subsection{Steady soliton}
\label{sec: steadysolitonexample}

We shall now construct a steady soliton solution to (\ref{harmonicQKflowequation}) on $\R^8$ endowed with its standard Euclidean metric. Recall from Corollary \ref{cor: shrinkersandexpandeers} that only shrinking and expanding solitons have to be compatible with the Euclidean metric, we do not know if this also has to be the case for steady solitons as well.

Motivated by our examples above, we define a QK $4$-form $\Om$ by expression (\ref{qk4form}), substituting
\begin{align*}
    dx_1 &\mapsto \hspace{0.9em}\cos(e^{x_1})dx_1 +\sin(e^{x_1}) dx_2,\\
    dx_2 &\mapsto -\sin(e^{x_1})dx_1 +\cos(e^{x_1}) dx_2.
\end{align*}
    This simply corresponds to rotating the $1$-forms $dx_1$ and $dx_2$, so $\Om$ still induces the Euclidean metric $g_0$. Unlike in our previous examples however, observe that now the rotating function is non-constant and hence $\Om$ is not torsion-free, since for instance $d\Om \neq 0$. A simple computation shows that the torsion $T$ is given by:
\begin{align*}
    T(\partial_{x_1}) &= -8 e^{x_1}(dx_{12}+dx_{34}-dx_{56}-dx_{78}),\\
    T(\partial_{x_i}) &=0, \qforq  i=2,...,8.
\end{align*}
Since the Levi-Civita connection of $g_0$ is just the flat connection, it is easy to see that 
\[
    \mathrm{div}(T)= -8 e^{x_1}(dx_{12}+dx_{34}-dx_{56}-dx_{78})= T(\partial_{x_1}).
\]
Since $X=\partial_{x_1}$ is a gradient Killing vector field, it follows from (\ref{gradientsoliton}) that this corresponds to a steady gradient soliton. It would be interesting to see if similar methods can be used to find steady solitons on other manifolds that $\R^8$.

Surprisingly enough, a similar modification to the standard $\Spin(7)$ $4$-form (\ref{eq: spin7form}) yields a steady $\Spin(7)$ soliton cf. \cite{Dwivedi2021}. More precisely, using the same rotation as above for $dx_1$ and $dx_2$ in the expression (\ref{eq: spin7form}), one can repeat an analogous computation as above to show that the resulting $\Spin(7)$-structure is a soliton for the harmonic flow of $\Spin(7)$-structures. It seems plausible that a similar procedure can be adapted to work for $\rG_2$ and other cases as well, cf.  \cites{Dwivedi2019, Loubeau2019}.

\addcontentsline{toc}{section}{References}
\bibliography{Bibliografia-2021-10}

\end{document}